\newcommand{\step}{0.6cm}
\newcommand\numberthis{\addtocounter{equation}{1}\tag{\theequation}}
\DeclareMathOperator*{\Spg}{spg}
\DeclareMathOperator*{\Var}{Var}
\def\hide #1 {}
\long\def\longhide #1 {}
\theoremstyle{plain}
\newtheorem{theorem}{Theorem}[section]
\newtheorem*{theorem*}{Theorem}
\newtheorem{lemma}[theorem]{Lemma}
\newtheorem{conjecture}[theorem]{Conjecture}
\theoremstyle{definition}
\newtheorem{remark}[theorem]{Remark}
\newtheorem{definition}[theorem]{Definition}
\theoremstyle{definition}
\newcommand*{\Scale}[2][4]{\scalebox{#1}{$#2$}}%
\newcounter{nootje}
\renewcommand{\check}[1]
\journal{SIAM Journal on Imaging Sciences}
\begin{document}

\begin{frontmatter}

%% Title, authors and addresses

%% use the tnoteref command within \title for footnotes;
%% use the tnotetext command for theassociated footnote;
%% use the fnref command within \author or \address for footnotes;
%% use the fntext command for theassociated footnote;
%% use the corref command within \author for corresponding author footnotes;
%% use the cortext command for theassociated footnote;
%% use the ead command for the email address,
%% and the form \ead[url] for the home page:
%% \title{Title\tnoteref{label1}}
%% \tnotetext[label1]{}
%% \author{Name\corref{cor1}\fnref{label2}}
%% \ead{email address}
%% \ead[url]{home page}
%% \fntext[label2]{}
%% \cortext[cor1]{}
%% \address{Address\fnref{label3}}
%% \fntext[label3]{}

\title{Robust Phase Retrieval Algorithm for Time-Frequency Structured Measurements\footnote{Results of this paper were included in the PhD thesis of Palina Salanevich. Algorithm~\ref{noiseless_case_alg} and recovery guarantees in the case of noiseless measurements (Theorem~\ref{noiseless_reconstruction_algorithm}) were introduced in \cite{salanevich}. Section \ref{algorithm} contains proofs of these results. Also, a uniform bound for the number of large frame coefficients (Theorem \ref{number_of_big_meas_uniform}) was introduced in \cite{salanevich2017geometric} and is included here for completeness of the discussion. The main part of this paper, namely, the results on the order statistics of frame coefficients for Gabor frames and on robustness of the algorithm to noise in the measurements, are new and did not appear anywhere else.}}

%% use optional labels to link authors explicitly to addresses:
%% \author[label1,label2]{}
%% \address[label1]{}
%% \address[label2]{}

\author[pfander]{G\"{o}tz E. Pfander}
\author[salanevich]{Palina Salanevich}

\address[pfander]{Department of Scientific Computing, Catholic University of Eichstätt-Ingolstadt, Germany. Email: pfander@ku.de}

\address[salanevich]{Department of Mathematics, University of California, Los Angeles, US. \\Email: psalanevich@math.ucla.edu.}

\begin{abstract}
We address the problem of signal  reconstruction from intensity measurements with respect to a measurement frame. This non-convex inverse problem is known as \emph{phase retrieval}. The case considered in this paper concerns phaseless measurements taken with respect to a Gabor frame. It arises naturally in many practical applications, such as diffraction imaging and speech recognition. We present a reconstruction algorithm that uses a nearly optimal number of phaseless time-frequency structured measurements and discuss its robustness in the case when the measurements are corrupted by noise. We show how geometric properties of the measurement frame are related to the robustness of the phaseless reconstruction. \hide{The presented algorithm is based on the idea of polarization as proposed by Alexeev, Bandeira, Fickus, and Mixon \cite{mixon1}.}
\end{abstract}

\begin{keyword}
%% keywords here, in the form: keyword \sep keyword
%% PACS codes here, in the form: \PACS code \sep code
%% MSC codes here, in the form: \MSC code \sep code
%% or \MSC[2008] code \sep code (2000 is the default)
phase retrieval \sep Gabor frames \sep expander graphs \sep order statistics of frame coefficients \sep angular synchronization \sep spectral clustering
\end{keyword}

\end{frontmatter}

%% \linenumbers

\section{Introduction}\label{Intro}

The \emph{phase retrieval} problem arises naturally in many applications within a variety of fields in science and engineering. Among these applications are optics \cite{Mill}, astronomical imaging \cite{Dain}, and microscopy \cite{Miao}.\par

As an example, let us consider the diffraction imaging problem \cite{Bunk}. To investigate the structure of a small particle, such as a DNA molecule, we illuminate the particle with X-rays and then measure the radiation scattered from it. When X-ray waves pass by an object and are measured in the far field, detectors are not able to capture the phase of the waves reaching them, but only their magnitudes. The measurements obtained in this way are of the form of pointwise squared absolute values of the Fourier transform of the object $x$, that is, the measurement map $\mathcal{A}$ is given by $\mathcal{A}(x) = \{|\mathcal{F}(x)(n)|^2\}_{n\in \Omega}$ where $\Omega$ is the sampling grid. Since $\mathcal{A}$ is not injective, some additional a priori information on the object $x$ is needed for reconstruction. For instance, knowledge of the chemical interactions between parts of a DNA molecule were used for the construction of the DNA double helix model in the Nobel Prize winning work of Watson, Crick, and Wilkins~\cite{Watson}.\par

One way to overcome this non-injectivity when no a priori information is available is \emph{masking}. To modify the phase front, one can insert a known mask after the object, as shown on Figure \ref{fig_diff_imaging}. The measurement map in this case is given by $\mathcal{A}(x) = \{|\mathcal{F}(x\odot f_t)(n)|^2\}_{n\in \Omega, t\in I}$, where $f_t$, $t\in I$, are the masks used, and $\odot$ denotes pointwise multiplication. By increasing the number of measurements in this way, we reduce the ambiguity in the reconstruction of signal $x$.\par 
\medskip
\begin{figure}[t]
\centering
\begin{overpic}[width=0.85\linewidth,tics=10,%grid
]
{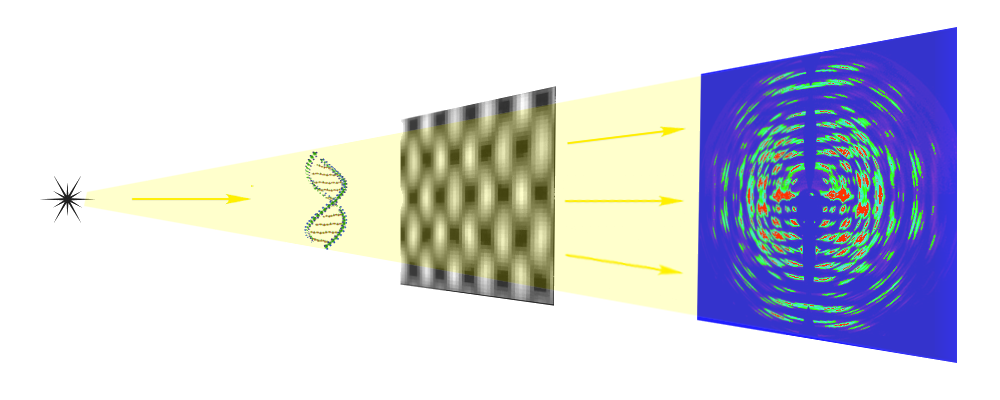}
\put(45,6){\footnotesize mask}
\put(30,11){\footnotesize DNA}
\put(28,9){\footnotesize molecule}
\put(0,14){\footnotesize X-ray source}
\put(73,1){\footnotesize diffraction pattern}
\end{overpic}
\caption{\label{fig_diff_imaging}A typical setup for structured illuminations in diffraction imaging using a phase mask.}
\end{figure}

\longhide{
In this paper, we consider one-dimensional finite length signals. A justification for this is the fact that we are going to talk about reconstruction algorithms which ultimately operate with finite dimensional digital data. To translate a continuous signal into a finite dimensional vector, one can use sampling.\par }

Since the problem of signal reconstruction from magnitudes of Fourier coefficients is particularly hard to handle, a more general frame theoretical setting is frequently considered. Namely, for $\Phi = \{\varphi_j\}_{j = 1}^N \subset \mathbb{C}^M$ being a \emph{frame}, that is, a possibly overcomplete spanning set for $\mathbb{C}^M$, we aim to recover $x\in \mathbb{C}^M$ from its phaseless squared frame coefficients $\mathcal{A}_{\Phi}(x) = \{|\langle x, \varphi_j\rangle|^2\}_{j = 1}^N$.

Note that the masked Fourier coefficients of the signal $x$ with masks $\{f_t\}_{t\in I}\subset \mathbb{C}^M$ can be viewed as the frame coefficients of $x$ with respect to the frame $\Phi$ given by $\Phi = \{\varphi_{t,j}\}$, where $\varphi_{t,j}(m) = \frac{e^{2\pi i j m/M}}{\sqrt{M}}\bar{f_t}(m)$.\par

It is clear that, even in an optimal setting, $x$ can be reconstructed from intensity measurements only up to a global phase. Indeed, for every real $\theta$, the signals $x$ and $e^{i\theta} x$ produce the same intensity measurements. Thus, the goal of phase retrieval is to reconstruct the equivalence class $[x]\in\mathbb{C}/_\sim$ of $x$, where $x\sim y$ if and only if $x=e^{i\theta} y$ for some $\theta\in [0,2\pi)$. In the sequel, we are going to identify $x$ with its equivalence class $[x]\in\mathbb{C}/_\sim$.\par

Obviously, not every frame gives rise to an injective measurement map. But even in the case when $\mathcal{A}_{\Phi}$ is known to be injective, the problem of reconstructing $[x]$ from $\mathcal{A}_{\Phi}(x)$ is NP-hard in general \cite{Sahin}. So, the main goals in this area of applied mathematics is to find conditions on the number of measurements $N$ and vectors $\varphi_j$ for which there exist an efficient and robust numerical recovery algorithm.\par

Until recently, very little was known on how to achieve robust and efficient reconstruction given injectivity. Many practical methods used today have their origins in the alternating projection algorithms proposed in the 1970s by Gerchberg and Saxton \cite{gerchberg1972practical}. Due to their lack of global convergence guarantees, the problem of developing fast phase retrieval algorithms which have provable recovery and robustness guarantees receives significant attention today. Some of the most prominent suggested algorithms are PhaseLift \cite{balan2, candes1, candes2, candes3, candes4}, Wirtinger flow algorithms \cite{candes2015phase, Chen}, fast phase retrieval algorithm from local correlation measurements \cite{iwen2015fast}, and phase retrieval with polarization \cite{mixon1, mixon2}. The latter is described in more detail in Section \ref{polarization_idea}.\par

While recovery guarantees have been established for all these algorithms, most of them require the measurement frame vectors to be independent random vectors. Since measurements of this type are not implementable in practice, the design of fast and stable recovery algorithms with a small number of application relevant, structured, measurements remains an important problem. We address this problem below. \par

\longhide{\subsection{Relation to previous work}

Until recently, very little was known on how to achieve stable and efficient reconstruction given injectivity.  Many methods in use today have their origins in the alternating projection algorithms proposed in the 1970s by Gerchberg and Saxton \cite{gerchberg1972practical}. These algorithms are conceptually simple, efficient to implement, and, hence, popular among practitioners, despite the lack of rigorous mathematical understanding of their properties and the lack of global recovery guarantees. In particular, delicate parameter and initial guess selection is required to enable reconstruction, or there might be multiple stationary points, or a stationary point outside the non-convex set $\{z: |\mathcal{F}(z)(\omega)| = |\mathcal{F}(x)(\omega)|\}$.\par 

%\begin{framed}
%{\sc Alternating Projections (Gerchberg-Saxton Algorithm):}\par
%\begin{center}
%\begin{tabular}{ll}
%{\bf Input:} & Measurements $\{y(\omega) = |\mathcal{F}(x)(\omega)|\}_{\omega\in \Omega}.$\\
%{\bf Initialization:} & Set initial guess $x_0$, $z_0(\omega) = y(\omega)\frac{\mathcal{F}(x_0)(\omega)}{|\mathcal{F}(x_0)(\omega)|}$.\\
%{\bf Loop} (until convergence): & Set $x_k(t) = \mathcal{F}^{-1}(z_{k-1})(t)$,\\
% & $z_k(\omega) = y(\omega)\frac{\mathcal{F}(x_k)(\omega)}{|\mathcal{F}(x_k)(\omega)|}$.\\
% {\bf Output:} & Approximation $x_k$ of the initial signal $x$.
%\end{tabular}
%\end{center}
%\end{framed}

More recently, the problem of developing phase retrieval algorithms which are efficient, have provable recovery guarantees, and are robust to noise in the measurements received a lot of attention.\par

The first phase retrieval algorithm for which recovery and robustness guarantees were proven is \emph{PhaseLift}. It is based on the idea of lifting, proposed by Balan, Bodmann, Casazza, and Edidin \cite{balan2}. They noticed that the intensity measurements can be lifted into the $M^2$-dimensional real vector space $H^M$ of self-adjoint matrices, where the measurements become of the form of Hilbert-Schmidt inner products $b_j=|\langle \varphi_j, x \rangle|^2 = \langle xx^*, \varphi_j \varphi_j^*\rangle_{HS}$. The phase retrieval problem is then translated to a rank minimization problem over $A = \{X\in H^M \text{, s.t. } \langle X, \varphi_j \varphi_j^*\rangle_{HS} ~=~b_j,~ X\succeq~0\}$. Inspired by this lifting trick, Cand\`{e}s, Eldar, Strohmer, and Voroninski used trace minimization over $A$ as a convex relaxation for the rank minimization \cite{candes1, candes2}. The main result of \cite{candes3} states that if the measurement vectors $\varphi_j$ are sufficiently randomized and the number of measurements $N$ is $O(M)$, then the solution of the trace minimization is exact and robust in presence of noise.  Partial derandomizations of PhaseLift have been proposed by Candes, Li, and Soltanolkotabi \cite{candes4} and by Gross, Krahmer, and K\"{u}ng \cite{gross2015partial}.\par

Another phase retrieval algorithm called \emph{Wirtinger flow} has been proposed by Candes, Li, and Soltanolkotabi \cite{candes2015phase}. It is based on non-convex optimization and has two components: a careful initialization obtained by means of spectral method, and a series of updates based on stochastic gradient descent scheme that refine this initial estimate. A modification of this algorithm, \emph{truncated Wirtinger flow}, has been shown to recover signal $x$ from $O(M)$ sufficiently randomized phaseless measurements and is robust in presence of noise \cite{Chen}.

A fast phase retrieval algorithm from \emph{local correlation measurements} is proposed in \cite{iwen2015fast}. In this paper a well-conditioned set of Fourier-based measurements is constructed. These measurements are guaranteed to allow for the phase retrieval of a given vector $x\in \mathbb{C}^M$ with high probability in $O(M \log^4 M)$-time.

Another method called \emph{phase retrieval with polarization} has been proposed by Alexeev, Bandeira, Fickus, and Mixon \cite{mixon1}. The main idea of this method is described in Section \ref{polarization_idea}.

While for all these algorithms recovery guarantees have been established, most of them are designed to work exclusively with randomly generated frames. Since measurements of this type are not implementable in practice, the design of fast and stable recovery algorithms with a small number of application relevant, structured, measurements remains an important problem, which we address in this paper.
}
\subsection{Main result}

We study the phase retrieval problem for \emph{Gabor frame} measurements, that is, the case when frame vectors are given by time and frequency shifts of a randomly chosen vector, called the \emph{Gabor window} (see Section \ref{GaborFr} for a precise definition). The main motivation for using Gabor frames is that Gabor frame coefficients are masked Fourier coefficients, where the masks are time (or space) shifts of the Gabor window. This makes measurements implementable in applications while preserving the flexibility of the frame-theoretic approach.  

Apart from diffraction imaging, phase retrieval with Gabor frames also arises, for example, in speech recognition problems. The use of a noisy phase or its estimation is a critical problem in speech recognition that may preventing the accurate reconstruction of a signal. There is a longstanding belief that speech recognition should be independent of phase.  Balan, Casazza, and Edidin addressed this conjecture by using the phase retrieval framework to show that the reconstruction of a signal is possible without using a phase or its estimation for a generic frame \cite{balan1}. However, construction of Gabor frames with such property still remains an open problem. Previous work on phase retrieval with Gabor frames concentrated on injectivity conditions for full Gabor frames and shows reconstruction from $M^2$ time-frequency structured measurements of an $M$-dimensional signal \cite{bojarovska2016phase}, while no injectivity results for Gabor frames of smaller cardinality are available to this date.

Based on the idea of polarization \cite{mixon1}, we propose a reconstruction algorithm for time-frequency structured measurements and investigate its robustness in the case when measurements are corrupted by noise. More precisely, we consider measurements of the form 
\begin{equation}\label{intro_meas}
|\langle x, \varphi_j \rangle|^2 + \nu_j, ~ \varphi_j\in \Phi,
\end{equation}
\noindent where $\nu_j$ are noise terms and $\Phi = \Phi_V\cup \Phi_E$ is the measurement frame with  a Gabor frame $\Phi_V$ given by \eqref{vertex_frame} and a set of vectors for additional measurements $\Phi_E$ given by \eqref{edge_frame}. We note that, while the frame $\Phi$ used for reconstruction is not a Gabor frame, the set of vectors for the additional measurements $\Phi_E$ also obeys time-frequency structure and measurements of a signal $x$ with respect to $\Phi_E$ have the form of  windowed Fourier transform measurements, see Section \ref{measurement_process} for the details. We prove the following result, a more precise formulation of which we state in Section~\ref{alg_noisy} as Theorem~\ref{stability_main}.

\begin{theorem} \label{stability_main_int}
Fix $x\in \mathbb{C}^M$ and consider the time-frequency structured phaseless measurements given by (\ref{intro_meas}). If the noise vector $\nu$ satisfies $\frac{||\nu||_2}{||x||_2^2}\le \frac{c}{M}$ for some $c$ sufficiently small, then there exists a numerical constant $C$ independent of $M$, so that for the estimate $\tilde{x}$ produced by Algorithm \ref{noisy_reconstruction} the following holds with overwhelming probability
\begin{equation}\label{recovery_guarantees_intro}
\min_{\theta\in [0,2\pi)}||\tilde{x} - e^{i\theta}x||_2^2 \le \frac{C\sqrt{M}||\nu||_2}{\Delta}.
\end{equation}
\noindent Here $\displaystyle \Delta = \min_{\Lambda'\subset \Lambda, |\Lambda'|\ge 2/3|\Lambda|}\sigma_{\min}^2(\Phi_{\Lambda'}^*)$ is the numerically erasure-robust frame bound for the Gabor frame $\Phi_V$.
\end{theorem}

The reconstruction algorithm we propose in this paper requires a close to optimal number $N = O(M\log M)$ of time-frequency structured measurements. To the best of our knowledge, Theorem~\ref{stability_main_int} provides the best existing robustness guarantee for measurements obeying time-frequency structure. Moreover, our result  guarantees exact recovery of $x$ (up to a global phase) in the noiseless case, that is, when $\nu = 0$. We note that the reconstruction guarantees given in Theorem~\ref{stability_main_int} are non-uniform, in the sense that inequality~\eqref{recovery_guarantees_intro} holds with high probability for each particular signal $x$, but not for all $x\in \mathbb{C}^M$ simultaneously. Obtaining a uniform version on Theorem~\ref{stability_main_int} is one of the main directions for future work. Numerical experiments, presented in Section \ref{numerical_robustness}, verify the robustness of the proposed phase retrieval algorithm and illustrate dependencies of the error-to-noise ratio on various parameters.\par

\begin{remark}
The bound obtained in Theorem~\ref{stability_main_int} is similar, up to a $\log$ factor, to the recovery guarantees obtained in \cite{mixon1} for phase retrieval with random Gaussian frames with independent frame vectors. Note that, in equation~\eqref{recovery_guarantees_intro}, the norm of the reconstruction error $\tilde{x} - e^{i\theta}x$ is squared, while the norm of the noise vector $\nu$ is not. This makes the obtained bound somewhat weaker compared to the recovery guarantees for the PhaseLift reconstruction algorithm (for random frames with independent frame vectors), which ensure that, if $||\nu||_2<\epsilon$, the estimate $\tilde{x}$ obtained using PhaseLift satisfies $\min_{\theta\in [0,2\pi)}||\tilde{x} - e^{i\theta}x||_2 \le C\epsilon$, for an appropriately chosen numerical constant~$C>0$~\cite{candes2}. At the same time, while the noise bound $\epsilon$ is an input parameter of the PhaseLift algorithm, the method described in this paper is independent of the actual noise size.
\end{remark}

The remaining part of this paper is organized as follows. In Section \ref{backgr}, we describe the idea of polarization and give some basic definitions and results from Gabor analysis and the theory of expander graphs that are used in the sequel. In Section \ref{algorithm}, we describe the reconstruction algorithm for time-frequency structured measurements and discuss the robustness of this algorithm in Section \ref{Stability}. The analysis of robustness of the constructed algorithm leads to the investigation of geometric properties of Gabor frames, such as order statistics of frame coefficients. These properties are discussed in Section \ref{projective_uniformity}. Numerical results of the algorithm's robustness are presented in Section \ref{numerical_robustness}. 

\section{Notation and setup}\label{backgr}

Here and in the sequel, $\odot$ denotes pointwise multiplication of two vectors of the same dimension. We view a vector $x\in\mathbb{C}^M$ as a function $x:\mathbb{Z}_M\to~\mathbb{C}$, that is, all the operations on indices are done modulo $M$ and $x(m-k) = x(M+m -k)$. We denote the complex unit sphere by $\mathbb{S}^{M-1} = \{x\in \mathbb{C}^M,  ||x||_2 = 1\}$.\par

The adjoint matrix of $A\in \mathbb{C}^{k\times m}$ is denoted by $A^*\in \mathbb{C}^{m\times k}$,  and the smallest singular value of $A$ is denoted by $\sigma_{\min}(A)$.  Also, by a slight abuse of notation, we identify a frame $\Phi = \{\varphi_j\}_{j = 1}^N\subset \mathbb{C}^M$ with its synthesis matrix, having the frame vectors $\varphi_j$ as columns. For any $V\subset \{1,\dots,N\}$, we set $\Phi_V = \{\varphi_j\}_{j \in V}$.\par

We denote the Bernoulli distribution with success probability $p$ by $~B\left(1,p\right)$. Further, $\mathcal{N}(\mu, \sigma)$ denotes the Gaussian distribution with mean $\mu$ and variance~$\sigma^2$, and $\mathbb{C}\mathcal{N}(\mu, \sigma)$ denotes the complex valued Gaussian distribution.

\subsection{Phase retrieval with polarization}\label{polarization_idea}

The polarization approach to phase retrieval can be described as follows~\cite{mixon1}. Suppose $\Phi_V = \{\varphi_j\}_{j\in V}\subset \mathbb{C}^M$ is a measurement frame. We consider the phase retrieval problem
\begin{align*}%\label{withphases}
     &\text{find } & x\\ \numberthis \label{nophases}
       &\text{subject to } & |\langle x, \varphi_j\rangle|^2 = b_j.
 \end{align*}
For any $(i,j)\in V\times V$ with $|\langle x,\varphi_i\rangle| \ne 0$ and $|\langle x,\varphi_j\rangle| \ne 0$, we define the \emph{relative phase} between frame coefficients as
\begin{equation}\label{RelPh_polar}
\omega_{ij}=\left(\frac{\langle x, \varphi_i \rangle}{|\langle x, \varphi_i\rangle|} \right)^{-1} \frac{\langle x, \varphi_j\rangle}{|\langle x, \varphi_j\rangle|}=\frac{\overline{\langle x, \varphi_i \rangle}\langle x, \varphi_j\rangle}{|\langle x, \varphi_i\rangle||\langle x, \varphi_j\rangle|}.
\end{equation}
\noindent Note that $\omega_{ij}\omega_{jk} = \omega_{ik}$. Suppose that we are given $\{\omega_{ij}\}_{(i,j)\in E}$ for some set $E\subset V\times V$  in addition to the phaseless measurements with respect to $\Phi_V$. Then we seek to solve the simpler problem
%\begin{equation*}
\begin{align*}%\label{withphases}
     &\text{find } & x\\ \numberthis \label{withphases}
       &\text{subject to } & \frac{\overline{\langle x, \varphi_i \rangle}\langle x, \varphi_j \rangle}{|\langle x, \varphi_i\rangle||\langle x, \varphi_j\rangle|} = \omega_{ij}, \\
&& |\langle x, \varphi_i\rangle|^2 = b_i.
 \end{align*}
%\end{equation*}
This problem can be solved using \emph{phase propagation}. More precisely, we choose $|\langle x,\varphi_{i_0}\rangle| \ne 0$, set $c_{i_0} = |\langle x,\varphi_{i_0}\rangle|$, and for every $j\in V$ with $(i_0,j)\in E$ define
\begin{equation*}\label{Rec}
c_{j} =
  \begin{cases}
   \omega_{i_0j} |\langle x, \varphi_j\rangle| & \quad \text{if } |\langle x, \varphi_j\rangle|\ne 0,\\
   0  & \quad \text{otherwise}.\\
  \end{cases}
\end{equation*}
\noindent In the next step, for each $k$ with $c_k$ not defined yet and $(i_0,j), (j,k)\in E$ for some $j$ with $b_j\ne 0$, we set
\begin{equation*}
c_{k} =
  \begin{cases}
   \omega_{jk}\frac{c_j}{|c_j|} |\langle x, \varphi_k\rangle| & \quad \text{if } |\langle x, \varphi_k\rangle|\ne 0,\\
   0  & \quad \text{otherwise}.\\
  \end{cases}
\end{equation*}
\noindent We repeat this step iteratively until values $c_i$ are assigned to all indices $i\in V$ that can be reached from $i_0$ using edges from $E$. This process is illustrated in Figure \ref{propagation} (left).\par 

Assume that we were able to compute $c_i$ for all $i\in V$. Then, using a dual frame $\tilde{\Phi}_V = \{\tilde{\varphi}_i\}_{i\in V}$ and treating $c_i$'s as frame coefficients, we reconstruct a representative of the ``up-to-a-global-phase'' equivalence class $[x]$ as
\begin{equation*}
\sum_{j\in V}c_j\tilde{\varphi}_j=\sum_{j\in V}\omega_{i_0j}|\langle x,\varphi_j\rangle|\tilde{\varphi}_j=\big(\tfrac{\langle x,\varphi_{i_0}\rangle}{|\langle x,\varphi_{i_0}\rangle|}\big)^{-1}\sum_{j\in V}\langle x,\varphi_j\rangle\tilde{\varphi}_j=\big(\tfrac{\langle x,\varphi_{i_0}\rangle}{|\langle x,\varphi_{i_0}\rangle|}\big)^{-1}x\in [x].
\end{equation*}

Let us consider the graph $G = (V,E)$, later called the \emph{graph of measurements}, with the set of vertices indexed by $V$ and the set of edges $E$. From the phase propagation procedure, it is apparent that if $\langle x,\varphi_j \rangle = 0$ for some $j\in V$, then the corresponding relative phases $\omega_{ji}$ are not defined for all $i\in V$ and the phase cannot be propagated through vertex $j$. This has the effect of deleting vertex $j$ from $G$, see Figure \ref{propagation} (right). If $G$ remains connected after deleting all ``zero'' vertices, then, for every vertex $i$, there exists a path from $i_0$ to $i$, and $c_i$ can be computed. This solves problem (\ref{withphases}).\par

Thus, the initial phase retrieval problem (\ref{nophases}) is reduced to the problem of finding relative phases between pairs of frame coefficients from a set $E$, so that the corresponding graph of measurements $G = (V,E)$ satisfies strong connectivity properties. To obtain the relative phase between frame coefficients, the following polarization identity is useful.
%\begin{framed}
%{\sc PHASE PROPAGATION ALGORITHM:}\par 
%\begin{center}
%\begin{tabular}{ll}
%{\bf Input:} & Measurements $\{b_i = |\langle x, \varphi_i \rangle|\}_{\varphi_i\in \Phi_V},$\\
%& $\{\omega_{ij} = \frac{\overline{\langle x, \varphi_i \rangle}\langle x, \varphi_j \rangle}{|\langle x, \varphi_i\rangle||\langle x, \varphi_j\rangle|}\}_{(i,j)_E}.$\\
%{\bf Initialization:} & Choose $|\langle x,\varphi_{i_0}\rangle| \ne 0$, set $c_{i_0} = |\langle x,\varphi_{i_0}\rangle|$.\\
%& for all $j$, s.t. $|\langle x, \varphi_j \rangle| = 0$, set $c_j= 0$.\\
%{\bf While} not all $c_j$ set: & Choose $c_i\ne 0$ already known,\\
% & for all $j$, s.t. $(i,j)\in E$ and $c_j$ is not set\\
% & $c_{j} = \omega_{ij}\frac{c_i}{|c_i|}|\langle x, \varphi_j \rangle|$.\\
%{\bf Output:} & $x = \sum_{j\in V}c_j\tilde{\varphi}_j$.
%\end{tabular}
%\end{center}
%\end{framed}
%\begin{center}
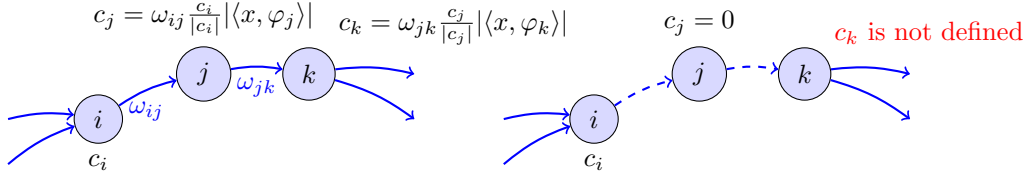
\begin{figure}
\begin{tabular}{cc}
\begin{minipage}[t]{0.45\textwidth}
\begin{tikzpicture} 
\draw (-3*\step + 0.4cm, -  \step ) node(B)[circle,fill=blue!15,draw,,label=below:{\footnotesize $c_{i}$}]{\footnotesize $i$};

\draw (3*\step - 0.4cm, 0 ) node(C)[circle,fill=blue!15,draw, label=above right:{ \footnotesize $c_{k}=\omega_{jk} \frac{c_j}{|c_j|}|\langle x,\varphi_k\rangle|$}]{\footnotesize  $k$};

\draw (0, 0) node(A)[circle,fill=blue!15,draw, label=above:{ \footnotesize $c_{j}=\omega_{ij} \frac{c_i}{|c_i|}|\langle x,\varphi_j\rangle|$}]{\footnotesize $j$};

\draw (A) edge[->, thick, bend left=10, color=blue] node [below] {\footnotesize $\omega_{jk}$} (C);

\draw (B) edge[->, thick, bend left=10, color=blue] node [below] {\footnotesize $\omega_{ij}$} (A);

%\draw (B) edge[->,  thick, bend right=10,color=green, dashed] node [above left] {} (C);

\draw (-5*\step + 0.4cm, - 2 * \step) edge[->,  thick, bend left=10, color=blue] node [left] {} (B);

\draw (-5*\step + 0.4cm, -  \step) edge[->,  thick, bend left=10, color=blue] node [left] {} (B);
  
\draw (C) edge[->,  thick, bend left=10, color=blue] node [left] {} (4*\step + 0.4cm, - \step);

\draw (C) edge[->,  thick, bend left=10, color=blue] node [left] {} (4*\step + 0.4cm, 0);

\end{tikzpicture}
\end{minipage}
%\captionof{figure}{\label{propagation}Phase propagation process.}
&
\begin{minipage}[t]{0.45\textwidth}
\begin{tikzpicture} 

\draw (-3*\step + 0.4cm, -  \step ) node(B)[circle,fill=blue!15,draw,,label=below:{\footnotesize $c_{i}$}]{\footnotesize $i$};

\draw (3*\step - 0.4cm, 0 ) node(C)[circle,fill=blue!15,draw, label=above right:\textcolor{red}{\footnotesize $c_{k} \text{ is not defined}$}]{\footnotesize $k$};

\draw (0, 0) node(A)[circle,fill=blue!15,draw, label=above:{\footnotesize $c_{j}=0$}]{\footnotesize $j$};

\draw (A) edge[->, thick, bend left=10, color=blue, dashed] node [below] {} (C);

\draw (B) edge[->, thick, bend left=10, color=blue, dashed] node [below] {} (A);

%\draw (B) edge[->,  thick, bend right=10,color=green, dashed] node [above left] {} (C);

\draw (-5*\step + 0.4cm, - 2 * \step) edge[->,  thick, bend left=10, color=blue] node [left] {} (B);

\draw (-5*\step + 0.4cm, -  \step) edge[->,  thick, bend left=10, color=blue] node [left] {} (B);
  
\draw (C) edge[->,  thick, bend left=10, color=blue] node [left] {} (4*\step + 0.4cm, - \step);

\draw (C) edge[->,  thick, bend left=10, color=blue] node [left] {} (4*\step + 0.4cm, 0); 
\end{tikzpicture}
\end{minipage}
%\captionof{figure}{\label{deleting} Phase propagation fails due to zero measurement corresponding to the vertex $j$.}

\end{tabular}
\caption{\label{propagation} Phase propagation process described above is shown on the left. We iteratively compute phases of the measurements using relative phases \eqref{RelPh_polar} and phases computed on previous step. On the right, phase propagation through vertex $j$ fails since the corresponding measurement is zero and relative phases $\omega_{ij}$ and $\omega_{jk}$ are not defined.}
\end{figure}
%\end{center}

\begin{lemma} \emph{\textbf{\cite{mixon1}}}\label{polarcor}
Let $\omega = e^{2\pi i /3}$. If $\langle x,\varphi_i\rangle \ne 0$ and $\langle x,\varphi_j\rangle \ne 0$, then
\begin{equation*}
\omega_{ij}=\frac{1}{3|\langle x,\varphi_i\rangle||\langle x,\varphi_j\rangle|}\sum_{k=0}^2\omega^{k}\big|\langle x,\varphi_i+\omega^{k}\varphi_j\rangle\big|^2.
\end{equation*}
\end{lemma}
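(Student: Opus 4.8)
The plan is to verify the identity directly by expanding the squared magnitudes on the right-hand side and exploiting the fact that $\omega = e^{2\pi i/3}$ is a primitive cube root of unity. Writing $a = \langle x,\varphi_i\rangle$ and $b = \langle x,\varphi_j\rangle$, both nonzero by hypothesis, I first record that the definition of the relative phase unwinds to $\omega_{ij} = \overline{a}\,b/(|a|\,|b|)$, using that $(z/|z|)^{-1} = \overline{z}/|z|$ for $z \neq 0$. Since the factor $|a|\,|b| = |\langle x,\varphi_i\rangle|\,|\langle x,\varphi_j\rangle|$ already appears in the prefactor on the right-hand side, it suffices to establish the unnormalized polarization identity
\begin{equation*}
\frac{1}{3}\sum_{k=0}^2 \omega^k \big|\langle x,\varphi_i + \omega^k\varphi_j\rangle\big|^2 = \overline{a}\,b,
\end{equation*}
after which dividing by $|a|\,|b|$ yields the claim.

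To prove this, I would use conjugate-linearity of the inner product in its second argument to write $\langle x,\varphi_i+\omega^k\varphi_j\rangle = a + \overline{\omega^k}\,b = a + \omega^{-k}b$, exploiting $\overline{\omega^k} = \omega^{-k}$. Expanding the modulus squared gives
\begin{equation*}
\big|a + \omega^{-k}b\big|^2 = |a|^2 + |b|^2 + \omega^{k}\,a\overline{b} + \omega^{-k}\,\overline{a}\,b.
\end{equation*}
Multiplying by $\omega^k$ and summing over $k = 0,1,2$, I then invoke the elementary root-of-unity identities $\sum_{k=0}^2 \omega^k = 0$, $\sum_{k=0}^2 \omega^{2k} = 0$ (since $\omega^2$ is again a primitive cube root of unity), and $\sum_{k=0}^2 1 = 3$. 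The terms carrying $|a|^2$ and $|b|^2$ each get multiplied into $\sum_k \omega^k = 0$, the term carrying $a\overline{b}$ gets multiplied into $\sum_k \omega^{2k} = 0$, and only the term carrying $\overline{a}\,b$ survives, contributing $3\,\overline{a}\,b$. Dividing by $3$ gives exactly the identity above.

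I do not expect a genuine obstacle here: the statement is a finite-dimensional polarization identity whose proof is a one-line cancellation of geometric sums of roots of unity. The only point demanding care is the convention for the sesquilinear form — the identity as stated requires the inner product to be conjugate-linear in the second slot (consistent with the masked-Fourier measurements in the excerpt), so that the surviving cross-term is precisely $\overline{a}\,b$ and not $a\overline{b}$; under the opposite convention one would instead recover $\overline{\omega_{ij}}$. I would also remark that the same computation goes through verbatim with $\omega$ replaced by any primitive $n$-th root of unity for $n \ge 3$, as the argument uses only that $\sum_{k=0}^{n-1}\omega^{mk} = 0$ for $m \not\equiv 0 \pmod n$; the choice $n = 3$ merely minimizes the number of auxiliary vectors $\varphi_i + \omega^k\varphi_j$ whose measurements must be taken.
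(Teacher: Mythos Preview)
Your proof is correct and is the standard direct verification of this polarization identity. Note, however, that the paper itself does not supply a proof of this lemma: it is stated with a citation to \cite{mixon1} and used as a black box, so there is no proof in the paper to compare against. Your expansion and root-of-unity cancellation argument is exactly the kind of routine computation one would expect to find in the cited source.
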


In other words, to compute the relative phase $\omega_{ij}$ between the nonzero frame coefficients $\langle x, \varphi_i \rangle$ and $\langle x, \varphi_j \rangle$, we may use three additional phaseless measurements of $x$ with respect to $\varphi_i + \varphi_j$, $\varphi_i + \omega\varphi_j$, and $\varphi_i + \omega^2\varphi_j$. This means that the reconstruction of $x$ using phase propagation involves only phaseless measurements, namely, phaseless measurements with respect to the union $\Phi_V\cup \Phi_E$, where $\Phi_V$ is a ``vertex'' frame and $\Phi_E = \{\varphi_i + \omega^k\varphi_j\}_{(i,j)\in E,~k\in\{0,1,2\}}$. Note that $|\Phi_V\cup \Phi_E| = |V| + 3|E|$.\par

In \cite{mixon1}, it is shown that in the noiseless case, one can perform phase retrieval with polarization using only $O(M)$ measurements. This algorithm is robust provided $\Phi_V$ consists of independent Gaussian vectors and the number of measurements is $O(M\log{M})$. In \cite{mixon2}, Bandeira, Chen, and Mixon adapt the polarization method to magnitude measurements of masked Fourier transforms of the signal. Using tools from additive combinatorics, the authors show that the graph of measurements they are using for reconstructon is sufficiently connected provided that the total number of measurements is~$O(M\log M)$. However, no stability results are given for the case of structured measurements as considered in \cite{mixon2}.\par

In Section \ref{algorithm} we use the idea of polarization to build a recovery algorithm for time-frequency structured measurements and show reconstruction and stability guarantees for the designed algorithm.

\subsection{Gabor frames for $\mathbb{C}^M$}\label{GaborFr}

Let us begin by defining two families of unitary operators on $\mathbb{C}^M$, namely, cyclic shift operators and modulation operators.
%\pagebreak
\begin{definition}\mbox{}
\begin{enumerate}[leftmargin=*]
\item \emph{Translation} (or \emph{time shift}) by $k\in \mathbb{Z}_M$, is given by
\begin{equation*}
T_k x = T_k\left( x(0), x(1),\dots, x(M-1)\right) = \left(x(m-k)\right)_{m\in \mathbb{Z}_M}.
\end{equation*}
\noindent That is, $T_k$ simply permutes entries of $x$ using $k$ cyclic shifts.
\item \emph{Modulation} (or \emph{frequency shift}) by $\ell\in \mathbb{Z}_M$ is given by 
\begin{equation*}
M_{\ell} x = M_{\ell}\left( x(0), x(1),\dots, x(M-1)\right) = \left(e^{2\pi i \ell m/M}x(m)\right)_{m\in \mathbb{Z}_M}.
\end{equation*}
\noindent That is, $M_{\ell}$ multiplies $x = x(\cdot)$ pointwise with the harmonic $e^{2\pi i\ell(\cdot) /M}$.
%\end{enumerate}
%\end{definition}

\item The superposition $\pi(k,\ell) = M_{\ell}T_k$ of translation by $k$ and modulation by $\ell$ is a \emph{time-frequency shift operator}.

%\begin{definition}
\item For $g\in\mathbb{C}^M\setminus \{0\}$ and $\Lambda\subset  \mathbb{Z}_M \times \mathbb{Z}_M$, the set of vectors 
\begin{equation*}
(g, \Lambda) = \{\pi (k, \ell)g\}_{(k,\ell)\in \Lambda}
\end{equation*}
\noindent is called the \emph{Gabor system} generated by the \emph{window} $g$ and the set $\Lambda$. A Gabor system  which spans $\mathbb{C}^M$ is a frame and is referred to as a \emph{Gabor frame}.
\end{enumerate}
\end{definition}

The \emph{discrete Fourier transform} $\mathcal{F}:\mathbb{C}^M\to \mathbb{C}^M$ plays a fundamental role in Gabor analysis. It is given pointwise by 
\begin{equation*}\label{DFT}
\mathcal{F}x(\ell) = \sum_{m\in\mathbb{Z}_M}{x(m)e^{-2\pi i m \ell /M}}, ~ \ell\in\mathbb{Z}_M.
\end{equation*}
\indent The \emph{short-time Fourier transform} (or \emph{windowed Fourier transform}) \linebreak$V_g : \mathbb{C}^M\to\mathbb{C}^{M\times M}$ with respect to the window $g\in\mathbb{C}^M\setminus \{0\}$ is given by
\begin{equation}\label{STFT}
(V_g x)(k,\ell) = \langle x, \pi(k,\ell)g\rangle = \mathcal{F}(x\odot T_k\bar{g})(\ell),~k,\ell\in \mathbb{Z}_M.
\end{equation}
\noindent Equality (\ref{STFT}) indicates that the short-time Fourier transform on $\mathbb{C}^M$ can be efficiently computed using the \emph{fast Fourier transform} (FFT), an efficient algorithm to compute the discrete Fourier transform of a vector. Phase retrieval with time-frequency structured measurements benefits from this, as it reduces the run time of recovery algorithms.\par 

As we shall use polarization for phase retrieval, we would like to choose a window function $g$ so that the frame $(g, \Lambda)$ is a \emph{full spark frame}, that is, so that for any subset $S\subset(g, \Lambda)$ of frame vectors with $|S|\ge M$, $S$ spans $\mathbb{C}^M$ \cite{Alex}. Note that if the full Gabor system $(g, \mathbb{Z}_M\times \mathbb{Z}_M)$ is full spark, then so is $(g, \Lambda)$, for any $\Lambda \subset \mathbb{Z}_M\times \mathbb{Z}_M$.\par

The following result on the spark of Gabor frames with random window was shown for $M$ prime in \cite{pfander1} and for $M$ composite in \cite{malik}.
\begin{theorem}\label{FullSp}
Let $M$ be a positive integer and let $\Lambda$ be a subset of \linebreak$\mathbb{Z}_M \times \mathbb{Z}_M$ with $|\Lambda|\ge M$. Then, for almost all windows $\zeta$ on the complex unit sphere $\mathbb{S}^{M-1}\subset \mathbb{C}^M$, $(\zeta, \Lambda)$ is a full spark frame.
\end{theorem}

A more detailed description of Gabor frames in finite dimensions and their properties can be found in \cite{pfander2}.

\section{Phase retrieval from Gabor measurements}\label{algorithm}

We now describe our  design of a measurement frame and a reconstruction process in the noisless case, also addressed in \cite{salanevich}, and then discuss the robustness of our algorithm in the case when measurements are corrupted by noise. The analysis of the algorithm's robustness leads to the investigation of geometric properties of the measurement frame, such as frame bounds and flatness of the vector of frame coefficients. These properties are not only important for the problem at hand, but are of general interest in Gabor analysis.

\subsection{Measurement process and frame construction}\label{measurement_process}

Consider the phase retrieval problem (\ref{nophases}) with measurement frame 
\begin{equation*}
\Phi = \{\varphi_j\}_{j = 1}^N = \Phi_V\cup \Phi_E \subset\mathbb{C}^M,
\end{equation*}
\noindent where $\Phi_V$ is a Gabor frame and $\Phi_E$ is a set of vectors corresponding to the additional edge measurements. As described in Section \ref{polarization_idea}, phaseless measurements with respect to $\Phi_E$ are used to compute relative phases between frame coefficients.\par

Let us specify $\Phi_V$ and $\Phi_E$ now. For $\Phi_V$ we choose the Gabor frame
\begin{equation}\label{vertex_frame}
\begin{split}
& \Phi_V = (g,\Lambda) \text{ with } \Lambda = F\times \mathbb{Z}_M, ~ F\subset\mathbb{Z}_M, ~ |F|=K,\\
\text{ and } g & \in \mathbb{C}^M \text{ uniformly distributed on the unit sphere } \mathbb{S}^{M-1}\subset \mathbb{C}^M.
\end{split}
\end{equation}
The integer $K$ is fixed and does not depend on the ambient dimension $M$, and $F$ is an arbitrary subset of $\mathbb{Z}_M$ of cardinality $K$. That is, we consider all frequency shifts and only a constant number of time shifts. As equation~(\ref{STFT}) indicates, our measurements are magnitudes of masked Fourier transform coefficients with the masks being $T_k\bar{g}$, $k\in F$.\par

We choose
\begin{equation}\label{edge_frame}
\begin{split}
\Phi_E = & \left\lbrace\pi(\lambda_1)g + \omega^t\pi(\lambda_2)g \right\rbrace_{(\lambda_1,\lambda_2)\in E,~ t\in \{0,1,2\}} \text{ with }\omega = e^{2\pi i/3},\\
E = & \left\lbrace((k_1,\ell_1),(k_2,\ell_2)) \text{, s.t. } k_1,k_2\in F, ~\ell_2 - \ell_1\in C\right\rbrace\subset \Lambda\times\Lambda,\\
\text{and } C = & D\cup (-D)\setminus \{0\}\subset \mathbb{Z}_M \text{ with } {\bf 1}_D(m)\sim \text{i.i.d. } B\left(1,\tfrac{d\log M}{M}\right).
\end{split}
\end{equation}
\noindent In other words, $D\subset\mathbb{Z}_M$ is constructed at random, so that every $m\in \mathbb{Z}_M$ is chosen to be an element of $D$ independently with probability $\tfrac{d\log M}{M}$, for a parameter~$d>0$ we will specify later, and $C = D\cup (-D)\setminus \{0\}$. Then
\begin{equation*}
 \pi(\lambda_1)g+ \omega^t \pi(\lambda_2) g  =\pi(k_1,\ell_1)g+\omega^t\pi(k_2, \ell_2)g = p_{(\ell_2 - \ell_1) k_1 k_2}(t) \odot \pi(\lambda_1)g,
\end{equation*}
\noindent where the vector $p_{c k_1 k_2}(t)\in\mathbb{C}^M$ is defined pointwise by
\begin{equation*}
p_{c, k_1, k_2}(t)(m) =1 + e^{2\pi i \left( \frac{cm}{M} + \frac{t}{3}\right)}\,\frac{g(m-k_2)}{g(m-k_1)},\quad m\in \mathbb{Z}_M,
\end{equation*}
\noindent with parameters $c\in C$, $k_1, k_2\in F$, and $t\in \{0,1,2\}$. Therefore, for each fixed set of four parameters $(c, k_1, k_2, t)$, the respective additional measurements are magnitudes of masked Fourier transform coefficients as well, namely,
\begin{equation}\label{Add}
\left | \langle x, p_{c k_1 k_2}(t) \odot  \pi(k_1,\ell)g \rangle \right | = \left| \mathcal{F}\left(x \odot \bar{p}_{c k_1 k_2}(t) \odot T_{k_1} \bar{g} \right) (\ell)  \right|, \quad \ell\in \mathbb{Z}_M.
\end{equation}

Let us note that the frame $\Phi = \Phi_V\cup\Phi_E$ constructed in this way consists of $|\Lambda| + 3|E| = |F|M + 3|F|^2\,|C|\,M$ vectors. Since $C = D\cup (-D)\setminus \{0\}$ with ${\bf 1}_D(m)\sim \text{i.i.d. } B\left(1,\frac{d\log M}{M}\right)$, we have $|C| = O(\log M)$ with high probability and thus $|\Phi| = O(M\log M)$.

Using the polarization identity in Lemma \ref{polarcor} with $\omega = e^{2\pi i/3}$, we compute the relative phases
\begin{equation}\label{RelPh}
\omega_{\lambda_1 \lambda_2}=\left(\tfrac{\langle x, \pi(\lambda_1)g \rangle}{|\langle x, \pi(\lambda_1)g\rangle|} \right)^{-1} \tfrac{\langle x, \pi(\lambda_2)g\rangle}{|\langle x, \pi(\lambda_2)g\rangle|} = \frac{\sum_{t = 0}^2\omega^t|\langle x, \pi(\lambda_1)g + \omega^t \pi(\lambda_2)g \rangle|^2}{3|\langle x, \pi(\lambda_1)g\rangle||\langle x, \pi(\lambda_2)g\rangle|},
\end{equation}
\noindent for $(\lambda_1,\lambda_2)\in E$, where $E$ is defined by (\ref{edge_frame}). Recall, that $\omega_{\lambda_1 \lambda_2}$ is well defined if and only if $|\langle x,\pi(\lambda_1)g\rangle| \ne 0$ and $|\langle x,\pi(\lambda_2)g\rangle| \ne 0$.

\begin{remark}
As equations (\ref{STFT}) and (\ref{Add}) indicate, all required measurements are magnitudes of masked Fourier transform coefficients. These are relevant for many applications. Moreover, measurements and reconstruction in this case can be implemented using FFT, which allows a noticeable speed up of measurement and reconstruction processes. For comparison, the computational complexity of the measurement process with random Gaussian frame of cardinality $O(M\log M)$ (as considered, for example, in \cite{candes2} and \cite{mixon1}) is $O(M^2\log M)$, and the complexity of measurement with the frame $\Phi$ constructed above is $O(M\log^2 M)$. Furthermore, in the case of random Gaussian frames, we have to use $O(M^2\log M)$ memory bits to store the measurement matrix, while for our frame $\Phi$ it is enough to store the window $g$ and the set $C$, and the overall amount of memory used is only $M + O(\log M) = O(M)$. These are some of the advantages of time-frequency structured frames in comparison to randomly generated frames.
\end{remark}

\subsection{Reconstruction in the noiseless case}

We now describe our polarization based reconstruction algorithm for time-frequency structured measurements.\par
Let us consider graph $G = (\Lambda, E)$, where $\Lambda$ and $E$ are defined by equations \eqref{vertex_frame} and \eqref{edge_frame}, respectively. Since $0\notin C$, the graph $G$ has no loops, and since $C=-C$, it is not directed. Also, each vertex $\lambda = (k,\ell)$ of $G$ is adjacent to any vertex $\lambda' = (k', \ell + c)$ with $c\in C$ and $k'\in F$. Thus, each vertex in $G$ has degree $|F||C|$ and $G$ is regular.\par
%\pagebreak
%\begin{framed}
%{\sc ALGORITHM 1 (reconstruction in the noiseless case)}\par 
%\begin{center}
%{\small
%\begin{tabular}{ll}
%{\bf Input:} & Phaseless measurements $b$ with respect to \\
%& $\Phi_\Lambda\cup\Phi_E$, defined by (\ref{vertex_frame}) and (\ref{edge_frame});\\
%& $F<\mathbb{Z}_M$, $C\subset \mathbb{Z}_M$, and window $g\in \mathbb{C}^M\setminus\{0\}$.\\
%{\bf Reconstruction:}& $\Scale[0.6]\bullet$ Construct the graph $G = (\Lambda,E)$ with \\
%& $\Lambda = F\times \mathbb{Z}_M$ and $E$ as in (\ref{edge_frame});\\
%& $\Scale[0.6]\bullet$ assign to each $\lambda\in \Lambda$ the weight $b_\lambda$;\\
%& $\Scale[0.6]\bullet$ assign to each edge $(\lambda_1, \lambda_2)\in E$ the weight $\omega_{\lambda_1 \lambda_2}$\\ & computed using (\ref{RelPh});\\
%& $\Scale[0.6]\bullet$ delete from $G$ all vertices $\lambda$ with $b_\lambda = 0$ to obtain $G' = (\Lambda',E')\subset G$;\\
%& $\Scale[0.6]\bullet$ choose a connected component $G'' = (\Lambda'',E'')\subset G'$ of the biggest size;\\
%& $\Scale[0.6]\bullet$ run the phase propagation process (Section \ref{polarization_idea}) to obtain $c_\lambda$, $\lambda \in \Lambda''$;\\
%& $\Scale[0.6]\bullet$ reconstruct $\tilde{x} = (\Phi_{\Lambda''} \Phi_{\Lambda''}^*)^{-1}\Phi_{\Lambda''}c$ from $c = \{c_\lambda\}_{\lambda\in \Lambda''}$.\\
%{\bf Output:} & $\tilde{x}\in [x]$, initial signal up to a global phase.
%\end{tabular}
%}
%\end{center}
%\end{framed}
%

\begin{figure}[t]\center
\begin{tabular}{ccc}
\includegraphics[width=0.31\linewidth]{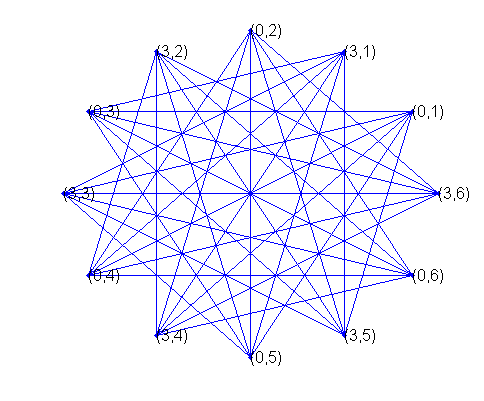}
&
\includegraphics[width=0.31\linewidth]{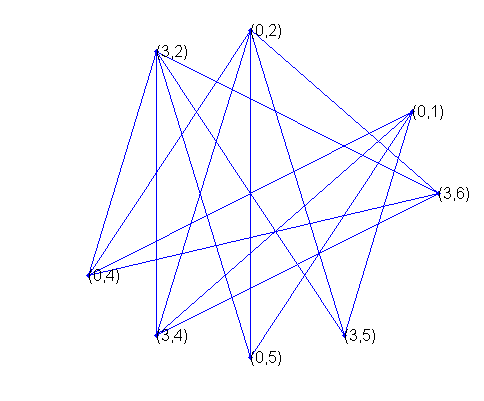}
&
\includegraphics[width=0.31\linewidth]{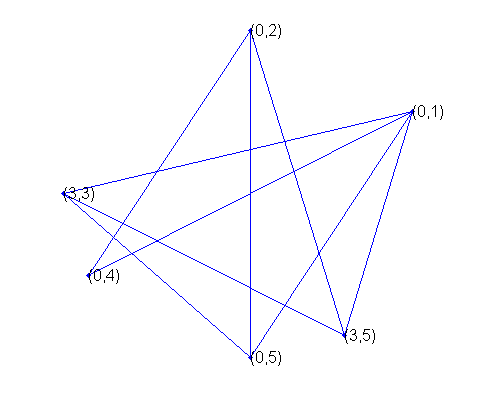}
\end{tabular}
\caption{\label{graph} An example of the graph of measurements $G$ with $M = 6$, $F = \{0,3\}$ and $C = \{2,3,4\}$ (left). This graph remains connected after deleting one third of its vertices (middle). After we delete one half of its vertices it has a connected component of size at least 4 (right).}
\end{figure}

%Since $0\notin C$, the graph $G$ constructed above has no loops, and since $C=-C$, it is not directed. Also, each vertex $\lambda = (k,\ell)$ of $G$ is adjacent to any vertex $\lambda' = (k', \ell + c)$ with $c\in C$ and $k'\in F$. Thus, each vertex in $G$ has degree $|F||C|$ and $G$ is regular.\par

Let $A = A(G)$ be the \emph{adjacency matrix} of $G$, that is, a $|\Lambda|\times |\Lambda|$ matrix whose $(\lambda_1,\lambda_2)$ entry is equal to the number of edges in $G$ connecting vertices $\lambda_1$ and $\lambda_2$. Being real and symmetric, $A(G)$ has $|\Lambda|$ real eigenvalues ${\alpha_1\ge \alpha_2\ge \dots\ge \alpha_{|\Lambda|}}$. We refer to the sequence $\{\alpha_i\}_{i=1}^n$ of eigenvalues  of $A(G)$ as the \emph{spectrum} of the graph $G$. Note that for a $d$-regular graph $G$, $\alpha_1 = d$. The spectrum encodes information about the connectivity of the graph. For example, $G$ is connected if and only if $\alpha_1 > \alpha_2$. 
The value $\Spg (G) = \frac{\alpha_1-\alpha}{\alpha_1}$, where $\alpha = \max\{|\alpha_2|, |\alpha_n|\}$, is known as the \emph{spectral gap} of $G$. The following version of \cite[Lemma 5.2.]{harsha} relates connectivity properties of the graph $G$ and its spectral gap, see also \cite{mixon1}.

\begin{lemma}\label{Spg_cor}
Let $G$ be a $d$-regular graph. For all $\varepsilon \le \frac{\Spg (G)}{6}$, the graph obtained by removing any $\varepsilon n$ vertices from $G$ has a connected component of size at least $\left(1 - \frac{2\varepsilon}{\Spg (G)} \right)n$.
\end{lemma}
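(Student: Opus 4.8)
The plan is to convert the spectral hypothesis into an edge-expansion statement via the Cheeger inequality (Theorem \ref{expans}) and then to exploit that deleting vertices can only destroy edges incident to the deleted set. Writing $s = \Spg(G) = \frac{d-\lambda}{d}$, the lower Cheeger bound gives $h(G) \ge \frac{d-\lambda}{2} = \frac{ds}{2}$, so by definition of $h(G)$ every set $S \subset V$ with $|S| \le \frac{n}{2}$ satisfies $|\partial S| \ge \frac{ds}{2}|S|$. Let $R$ be the set of removed vertices, $|R| = \varepsilon n$, and let $G' = G[V \setminus R]$ be the graph on the surviving vertices. The single observation that drives everything is this: if $S$ is a union of connected components of $G'$, then in $G$ it has no edges to $(V \setminus R) \setminus S$, so all of its boundary edges must land in $R$; hence $|\partial S| \le |E(S,R)| \le d|R| = d\varepsilon n$, using $d$-regularity.

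I would then let $C$ be a largest connected component of $G'$ and set $m = |C|$, the goal being $m \ge \big(1 - \tfrac{2\varepsilon}{s}\big)n$. First I would show $m \ge \frac{n}{2}$. If instead every component had size at most $\frac{n}{2}$, I could apply the expansion bound to each component $C_i$ separately to get $\frac{ds}{2}|C_i| \le |E(C_i,R)|$, and sum over all components: the edge sets $E(C_i,R)$ are pairwise disjoint and together use at most $d|R|$ edges, so this yields $\frac{ds}{2}(1-\varepsilon)n \le d\varepsilon n$, i.e.\ $s(1-\varepsilon) \le 2\varepsilon$. For $\varepsilon \le \frac{s}{6}$ (and $s \le 1$) the left side is at least $\frac{5s}{6}$ while the right side is at most $\frac{s}{3}$, a contradiction; so some component exceeds $\frac{n}{2}$, and being largest, $m \ge \frac{n}{2}$. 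Now with $m \ge \frac{n}{2}$ I would apply expansion not to $C$ itself but to its complement $S = V \setminus C$, which has $|S| = n - m \le \frac{n}{2}$. Since $C$ is a component of $G'$, its only external edges go to $R$, so $\partial S = \partial C = E(C,R)$ and $|\partial S| \le d\varepsilon n$. Combining with $|\partial S| \ge \frac{ds}{2}(n-m)$ gives $n - m \le \frac{2\varepsilon}{s}n$, which is exactly $m \ge \big(1 - \tfrac{2\varepsilon}{s}\big)n$.

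The step I expect to be the main obstacle, and the one that makes the clean constant come out, is recognizing that expansion should be applied to the complement $V \setminus C$ rather than to the union of the small components. The latter has size $(1-\varepsilon)n - m$ and would only yield $m \ge \big(1 - \varepsilon - \tfrac{2\varepsilon}{s}\big)n$, losing an additive $\varepsilon n$; passing through $V \setminus C$, which reabsorbs the removed vertices, avoids this loss precisely because the boundary of $C$ is unchanged when $R$ is folded back in. The second delicate point is the need to separately exclude the ``shattered'' regime in which $G'$ has no component larger than $\frac{n}{2}$: there no single component can meet the target size $\big(1 - \tfrac{2\varepsilon}{s}\big)n \ge \frac{2n}{3}$, so this case must be ruled out by a global argument, and that is exactly where the hypothesis $\varepsilon \le \frac{\Spg(G)}{6}$ is consumed.
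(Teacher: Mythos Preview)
The paper does not supply its own proof of this lemma: it is stated as a version of \cite[Lemma~5.2]{harsha} with a further pointer to \cite{mixon1}, and no argument is given in the text. So there is no in-paper proof to compare against.

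That said, your argument is correct and is essentially the standard expander proof of this fact. The two ingredients you isolate --- (i) ruling out the ``shattered'' case by summing the Cheeger lower bound over all components and bounding the total boundary by $d|R|$, and (ii) once a majority component $C$ exists, applying expansion to $V\setminus C$ rather than to the union of the small surviving components --- are exactly the right moves, and your bookkeeping checks out: with $s=\Spg(G)\le 1$ and $\varepsilon\le s/6$ one has $s(1-\varepsilon)\ge 5s/6 > s/3 \ge 2\varepsilon$, giving the contradiction in step~(i), and $\partial(V\setminus C)=\partial C\subset E(C,R)$ gives $|\partial(V\setminus C)|\le d\varepsilon n$ in step~(ii), whence $n-m\le \tfrac{2\varepsilon}{s}n$. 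Your remark that passing through $V\setminus C$ (which reabsorbs $R$) is what produces the clean constant $\tfrac{2\varepsilon}{s}$ rather than $\varepsilon+\tfrac{2\varepsilon}{s}$ is also on point.
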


\begin{algorithm}
    \SetKwInOut{Input}{Input}
    \SetKwInOut{Output}{Output}

    %\underline{function Euclid} $(a,b)$\;
    \Input{phaseless measurements $b$ with respect to $\Phi_\Lambda\cup\Phi_E$, defined by (\ref{vertex_frame}) and (\ref{edge_frame}); $F\subset\mathbb{Z}_M$, $C\subset \mathbb{Z}_M$, and window $g\in \mathbb{C}^M\setminus\{0\}$}
    \Output{$\tilde{x}\in [x]$, i.e. the signal $x$ up to a global phase.}

construct the graph $G = (\Lambda,E)$ with $\Lambda = F\times \mathbb{Z}_M$ and $E$ as in (\ref{edge_frame})\;
assign to each $\lambda\in \Lambda$ the weight $b_\lambda = |\langle x, \pi(\lambda)g \rangle|^2$\;
assign to each edge $(\lambda_1, \lambda_2)\in E$ the weight $\omega_{\lambda_1 \lambda_2}$ computed using (\ref{RelPh})\;
delete from $G$ all vertices $\lambda$ with $b_\lambda = 0$ to obtain $G' = (\Lambda',E')\subset G$\;
choose a connected component $G'' = (\Lambda'',E'')\subset G'$ of the biggest size\;
run the \emph{phase propagation} process (Section \ref{polarization_idea}) to obtain $c_\lambda$, $\lambda \in \Lambda''$\;
reconstruct $\tilde{x} = (\Phi_{\Lambda''} \Phi_{\Lambda''}^*)^{-1}\Phi_{\Lambda''}c$ from $c = \{c_\lambda\}_{\lambda\in \Lambda''}$.
    \caption{\label{noiseless_case_alg} Reconstruction in the noiseless case}
\end{algorithm}

To be able to reconstruct a signal $x$ using Algorithm \ref{noiseless_case_alg}, we need to ensure that $|\Lambda''|$ is sufficiently large, so that $\Phi_{\Lambda''} = (g, \Lambda'')$ is a frame. Then $x$ can be recovered from its frame coefficients with respect to $\Phi_{\Lambda''}$. In terms of the graph of measurements $G$, this means that after we delete all vertices $\lambda$ with zero weight, the resulting graph has a connected component of sufficiently big size. As Lemma \ref{Spg_cor} shows, this is satisfied provided that $G$ has a sufficiently big spectral gap. The spectral gap of $G = (\Lambda, E)$ can be estimated in terms of the set $C$. 
More precisely, for the random set $C$ constructed in \eqref{edge_frame}, the following result is shown in \cite{mixon2}.
\begin{lemma}\label{bernoulli}
Pick $d > 36$ and suppose the entries of the characteristic vector~${\bf 1}_D$ of a set~$D$ are independent with distribution  $B\left(1,\frac{d\log M}{M}\right)$. Take ${C = D\cup (-D)\setminus \{0\}}$ and construct the graph $G$ as above. Then with overwhelming probability
\begin{equation*}
\Spg (G) \ge 1 - \frac{6}{\sqrt{d}}.
\end{equation*}
\end{lemma}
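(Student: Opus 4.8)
The plan is to combine Lemma \ref{bias} with concentration estimates for the two quantities appearing in the spectral gap formula $\Spg(G) = 1 - ||C||_u/|C|$. By that lemma it suffices to prove that, with overwhelming probability, $||C||_u/|C| \le 6/\sqrt{d}$; the hypothesis $d>36$ then also guarantees $\Spg(G) > 0$. I would bound numerator and denominator on separate high-probability events and intersect them. Throughout, write $p = \frac{d\log M}{M}$ for the success probability of the Bernoulli trials defining ${\bf 1}_D$.

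For the denominator, observe that since $C = D\cup(-D)\setminus\{0\}$, for each $j\ne 0$ the indicator ${\bf 1}_C(j) = \max({\bf 1}_D(j), {\bf 1}_D(-j))$ is a Bernoulli variable with parameter $q = 2p - p^2$, and these are independent across the distinct unordered pairs $\{j,-j\}$. Hence $|C| = \sum_{j\ne 0}{\bf 1}_C(j)$ is a sum of independent bounded variables with mean $q(M-1) = (2-o(1))\,d\log M$, using that $|D\cap(-D)|$ has vanishing expectation. A Chernoff lower-tail bound then yields $|C| \ge (2-o(1))\,d\log M$ with overwhelming probability, the failure probability being polynomially small in $M$ to a power proportional to $d$.

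For the numerator, fix $m\ne 0$ and use $C=-C$ to write the (real) transform as $\mathcal{F}{\bf 1}_C(m) = \sum_{j\ne 0}{\bf 1}_C(j)\,e^{-2\pi i jm/M} = \sum_{\{j,-j\}}{\bf 1}_C(j)\cdot 2\cos(2\pi jm/M)$, again a sum of independent terms bounded by $2$ in absolute value. Its mean equals $-q$ by the identity $\sum_{j\ne 0}e^{-2\pi i jm/M} = -1$, hence is of order $\log M/M$ and negligible, and its variance is of order $d\log M$. A Bernstein (Chernoff-type) bound then controls the deviation of $\mathcal{F}{\bf 1}_C(m)$ from its mean at scale $\tau\asymp\sqrt{d}\log M$; since the variance is of order $d\log M$ while $\tau$ is of order $\sqrt{d}\log M$, the Bernstein exponent grows like $\log M$, so each frequency fails only with probability $M^{-c}$. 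A union bound over the $M-1$ values $m\ne 0$ then gives $||C||_u \le A\sqrt{d}\log M$ with overwhelming probability for an absolute constant $A$. Dividing the two estimates produces $||C||_u/|C| \le \frac{A}{2-o(1)}\cdot\frac{1}{\sqrt{d}}$, and it remains to verify that $A$ can be chosen so that the right-hand side is at most $6/\sqrt{d}$.

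The main obstacle is the numerator estimate: the deviation level $\tau$ in the Bernstein bound must be taken large enough (of order $\sqrt{d}\log M$) that the union bound over all $M-1$ frequencies still leaves an overwhelming-probability event, yet small enough that the resulting constant in $||C||_u/|C|$ stays below $6/\sqrt{d}$. The slack here is comfortable, because the ratio of variance to squared deviation forces the Bernstein exponent to scale like $\log M$, making the per-frequency failure probability polynomially small in $M$ to a large power, so that a constant such as $A=3$ already suffices and the target $6/\sqrt{d}$ is far from tight; the role of $d>36$ is simply to keep $6/\sqrt{d}<1$ and hence $\Spg(G)$ positive. Minor technical points — the coupling of ${\bf 1}_C$ within each pair $\{j,-j\}$, the self-paired index $j=M/2$ when $M$ is even, and the negligible contribution of $D\cap(-D)$ — are absorbed by the pairing above and do not affect the leading-order constants.
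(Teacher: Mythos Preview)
Your proposal is correct and matches the paper's approach: the paper does not give a detailed proof but simply refers to \cite{mixon2} and notes that the argument combines Lemma~\ref{bias} with a Chernoff-type bound, which is precisely what you do --- a lower Chernoff tail for $|C|$ and an upper Bernstein tail plus union bound over frequencies for $||C||_u$. Your handling of the pair structure of $C=-C$ and the constant bookkeeping (yielding roughly $A/(2\sqrt{d})$ with $A$ a small absolute constant, comfortably below $6/\sqrt{d}$) is the intended computation.
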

\longhide{\begin{proof}
By , we have $\Spg (G) = 1 - \frac{M}{|C|}||C||_u$. Thus, to conclude the proof, it is enough to show that 
\begin{equation*}
||C||_u\le \frac{6|C|}{\sqrt{d} M}.
\end{equation*}
\end{proof}
}
Now we are ready to establish recovery guarantees for Algorithm \ref{noiseless_case_alg}.

\begin{theorem}\label{noiseless_reconstruction_algorithm}
Let frames $\Phi _V$ and $\Phi_E$ be constructed as above, with \linebreak$|F|=12$ and $d = 144$. Then every signal $x\in \mathbb{C}^M$ can be reconstructed form $M + 3|F|^2M|C| = O(M\log M)$ phaseless measurements with respect to the frame $\Phi_V\cup\Phi_E$ using Algorithm \ref{noiseless_case_alg}.
\end{theorem}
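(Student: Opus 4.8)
The plan is to verify that Algorithm 1 succeeds by controlling three quantities in turn: the number of vertices deleted because their coefficient vanishes, the spectral gap of the graph of measurements $G$, and the resulting size of the surviving connected component $G''$. Once $|\Lambda''| > M$ is established, full spark makes $\Phi_{\Lambda''}$ a frame and the final reconstruction step inverts correctly.

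First I would bound the number of deleted vertices. A vertex $\lambda$ is removed precisely when $\langle x, \pi(\lambda)g\rangle = 0$. Since $g$ is drawn uniformly from $\mathbb{S}^{M-1}$, Theorem \ref{FullSp} guarantees that $\Phi_V = (g,\Lambda)$ is almost surely full spark, so any $M$ of its vectors are linearly independent. A nonzero $x$ can therefore be orthogonal to at most $M-1$ of the frame vectors, whence at most $M-1$ vertices are deleted out of the total $n = |\Lambda| = |F|M = 12M$. The deleted fraction is thus $\varepsilon = (M-1)/(12M) < 1/12$. Next, with $d = 144$, Lemma \ref{bernoulli} gives $\Spg(G) \ge 1 - 6/\sqrt{144} = 1/2$ with overwhelming probability, so $\Spg(G)/6 \ge 1/12 > \varepsilon$ and the hypothesis of Lemma \ref{Spg_cor} is met. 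Applying that lemma, after deleting the zero vertices the graph retains a connected component $G''$ of size at least
\begin{equation*}
\left(1 - \frac{2\varepsilon}{\Spg(G)}\right) n \ge 12M - \frac{2(M-1)}{1/2} = 8M + 4 > M .
\end{equation*}
The choices $|F| = 12$ and $d = 144$ are calibrated precisely so that this count clears the threshold $M$.

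Third, since $|\Lambda''| > M$ and $\Phi_V$ is full spark, the subcollection $\Phi_{\Lambda''} = (g,\Lambda'')$ spans $\mathbb{C}^M$ and is a frame; hence the frame operator $\Phi_{\Lambda''}\Phi_{\Lambda''}^*$ is invertible and the last line of Algorithm 1 is well defined. Because $G''$ is connected and every one of its vertices carries a nonzero coefficient, the phase propagation described in Section \ref{polarization_idea} assigns to each $\lambda \in \Lambda''$ a value $c_\lambda = \bar{\alpha}\,\langle x, \pi(\lambda)g\rangle$ for a single unimodular constant $\alpha$ fixed by the base vertex. The canonical-dual reconstruction then returns $(\Phi_{\Lambda''}\Phi_{\Lambda''}^*)^{-1}\Phi_{\Lambda''} c = \bar{\alpha}\,x \in [x]$, which is the desired output. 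The measurement count is $|\Lambda| + 3|E| = O(M\log M)$ because $|C| = O(\log M)$ with high probability.

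I expect the crux to be the quantitative matching in the second step: one must simultaneously keep the deleted fraction $\varepsilon$ below $\Spg(G)/6$, so that Lemma \ref{Spg_cor} applies at all, and force the guaranteed component size $\bigl(1 - 2\varepsilon/\Spg(G)\bigr)n$ to exceed the threshold $M$ needed for $\Phi_{\Lambda''}$ to remain spanning. This is exactly where the full-spark bound of $M-1$ deleted vertices, the spectral-gap bound of $1/2$, and the redundancy factor $|F| = 12$ must be balanced against one another, which is what pins down the specific constants $|F| = 12$ and $d = 144$.
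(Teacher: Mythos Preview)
Your proposal is correct and follows essentially the same route as the paper's proof: bound the number of vanishing coefficients by $M-1$ via full spark (Theorem \ref{FullSp}), invoke Lemma \ref{bernoulli} with $d=144$ to get $\Spg(G)\ge 1/2$, apply Lemma \ref{Spg_cor} with $\varepsilon \le 1/12$ and $n=12M$ to obtain a connected component of size at least $8M>M$, and then reconstruct via the canonical dual of the full-spark subframe $\Phi_{\Lambda''}$. The only cosmetic difference is that the paper rounds up to $\varepsilon = 1/12$ (deleting ``$M$'' vertices) while you keep the exact fraction $(M-1)/(12M)$, yielding $8M+4$ instead of $8M$; both are valid.
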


\begin{proof}
We begin the reconstruction algorithm with assigning to each vertex $\lambda\in \Lambda$ of the constructed graph $G$ the weight $b_\lambda = |\langle x,\pi(\lambda)g \rangle|$ and assigning to each edge $(\lambda_1,\lambda_2)\in E$ of $G$ the relative phase $\omega_{\lambda_1\lambda_2}$ which is computed from the additional edge measurements. Theorem \ref{FullSp} implies that the frame $\Phi_V = \{\pi (\lambda)g\}_{\lambda \in \Lambda}$ is full spark with probability $1$. Thus, for any vector $x\in \mathbb{C}^M$, the number of zero measurements among $\{b_\lambda = | \langle x, \pi(\lambda)g\rangle|\}_{\lambda\in \Lambda}$ is at most $M-1$. In other words, Algorithm \ref{noiseless_case_alg} deletes at most $M-1$ vertices from $G$ to obtain $G'$.\par

Next, $\Phi_V$ being full spark implies that any its subset $\Phi_{V'}\subset\Phi_V$ of size $|\Phi_{V'}|\ge M$ form a frame. Thus, to recover $x$, it is enough to know any $M$ of the frame coefficients with respect to $\Phi_V$. To show that $G'$ has a connected component $G''$ of size at least $M$, first note that Lemma \ref{bernoulli} ensures that $\Spg (G) \ge 1 - \frac{6}{\sqrt{d}} = \frac{1}{2}$. Then, applying Lemma \ref{Spg_cor} with \linebreak$n = |\Lambda| = |F|M$ and $\varepsilon = \frac{1}{|F|} = \frac{1}{12}\le \frac{\Spg G}{6}$, we obtain that after deleting any $\varepsilon n = M$ vertices from $G$, the largest connected component $G''$  will have at least $\left(1-\frac{2\varepsilon}{\Spg G}\right)n\ge \frac{2}{3}|F|M = 8M > M$ vertices.\par

By running the phase propagation algorithm on the connected graph $G'' = (\Lambda'', E'')$, we recover $c_\lambda$, $\lambda\in \Lambda''$, which are, up to a global phase $e^{i\theta}$, $\theta \in [0,2\pi)$, equal to the corresponding frame coefficients  of $x$ with respect to $\Phi_{\Lambda''}$, that is
\begin{equation*}
c_\lambda = e^{i\theta} \langle x, \pi(\lambda)g\rangle, ~ \lambda\in \Lambda''.
\end{equation*}
Now, using the canonical dual frame, we obtain $$\tilde{x} = (\Phi_{\Lambda''} \Phi_{\Lambda''}^*)^{-1}\Phi_{\Lambda''}c = e^{i\theta}(\Phi_{\Lambda''} \Phi_{\Lambda''}^*)^{-1}\Phi_{\Lambda''} \Phi_{\Lambda''}^*x = e^{i\theta} x.$$
\end{proof}

\longhide{Theorem \ref{noiseless_reconstruction_algorithm} shows that $O(M\log M)$ masks suffice for the reconstruction. Unfortunately, the log factor is not just a side effect of the methods we are using for the reconstruction. The following result shows that the $\log$-factor is necessary for polarization-based recovery from masked Fourier coefficients based on the graph $G$ constructed as above \cite{mixon2}.

\begin{theorem}\label{logf}
For $G$ defined as above, $\Spg (G)> \varepsilon$ only if 
\begin{equation*}
|C|\ge \frac{\log M}{2 + \log (1/\varepsilon)}.
\end{equation*}
\end{theorem}
\par
}
\section{Robustness of reconstruction in the presence of noise}\label{Stability}
In many applications, measurements are corrupted by noise. In this section we address the behaviour of the presented algorithm in the case when the available measurements are of the form
\begin{equation}\label{measurements}
\begin{split}
b_\lambda = |\langle x, \pi(\lambda)g \rangle|^2 + \nu_\lambda, & \quad \lambda\in \Lambda;\\
b_{\lambda_1\lambda_2t} = |\langle x, \pi(\lambda_1)g + \omega^t \pi(\lambda_2)g \rangle|^2 + \nu_{\lambda_1\lambda_2t}, & \quad (\lambda_1,\lambda_2)\in E,~ t\in \{0,1,2\},
\end{split}
\end{equation}
\noindent where $\nu_\lambda ,~ \nu_{\lambda_1\lambda_2t}$ are noise terms. We aim to construct a modification of Algorithm \ref{noiseless_case_alg} which, in presence of noise, recovers a close estimate $\tilde{x}$ of the original signal $x$.\par 

\subsection{Order statistics of frame coefficients}\label{projective_uniformity}

To compute a relative phase between two frame coefficients, we rely on formula (\ref{RelPh}). The calculations include division by $|\langle x, \pi (\lambda_1)g\rangle|$ and $|\langle x, \pi (\lambda_2)g\rangle|$ and are therefore very sensitive to perturbations when $|\langle x, \pi (\lambda_1)g\rangle|$ or \linebreak$|\langle x, \pi (\lambda_2)g\rangle|$ is small. While ``zero'' vertices provide no relative phase information, vertices with small vertex measurements lead to unreliable relative phase estimations and should therefore be deleted from the graph. As we require the graph of measurements to have a connected component of size at least $M$ after deleting vertices with small weights, the number of such vertices has to be estimated. To do so, we show that the vector of frame coefficients of a fixed $x\in \mathbb{S}^{M-1}$ with respect to a Gabor frame with random window is ``flat'' with high probability. That is, most of the frame coefficients are in the range of $\frac{c}{\sqrt{M}}$ to $\frac{K}{\sqrt{M}}$, for some suitably chosen constants $K>c>0$. 

\begin{theorem}\label{number_of_small_meas}
Fix $x\in \mathbb{S}^{M-1}\subset \mathbb{C}^M$ and consider a Gabor frame $(g, \Lambda)$ with $\Lambda\subset\mathbb{Z}_M\times \mathbb{Z}_M$ and a random window $g$ uniformly distributed on the unit sphere $\mathbb{S}^{M-1}$. Then the following holds.
\begin{enumerate}
\item[(a)] For any $c>0$ and $k>0$, with probability at least $1-\frac{1}{k^2}$, we have
\begin{equation*}
\left|\left\lbrace\lambda\in \Lambda \text{, s.t. } |\langle x, \pi(\lambda)g \rangle|< \frac{c}{\sqrt{M}}\right\rbrace\right|<|\Lambda|(c^2 + kc).
\end{equation*}
\item[(b)] For any $K>0$ and $k>0$, with probability at least $1-\frac{1}{k^2}$, we have
\begin{equation*}
\left|\left\lbrace\lambda\in \Lambda \text{, s.t. } |\langle x, \pi(\lambda)g \rangle|> \frac{K}{\sqrt{M}}\right\rbrace\right|<|\Lambda|\left(\frac{8}{\pi}e^{-K^2} + k\frac{2\sqrt{2}}{\sqrt{\pi}}e^{-\frac{K^2}{2}}\right).
\end{equation*}
\end{enumerate}
\end{theorem}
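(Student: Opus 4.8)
The plan is to regard each frame coefficient as a single random variable and then to count how many of them are small (resp.\ large) by a second moment argument. The crucial preliminary observation is that, since every time-frequency shift $\pi(\lambda)$ is unitary, $|\langle x,\pi(\lambda)g\rangle| = |\langle \pi(\lambda)^*x, g\rangle|$, and $\pi(\lambda)^*x$ is again a unit vector. Hence for \emph{every} $\lambda$ the quantity $|\langle x,\pi(\lambda)g\rangle|^2$ has the same distribution as $|\langle u, g\rangle|^2$ for a fixed unit vector $u$ and $g$ uniform on $\mathbb{S}^{M-1}$. Writing $g = z/\|z\|_2$ with $z$ having i.i.d.\ $\mathbb{C}\mathcal{N}(0,1)$ entries identifies $|\langle u,g\rangle|^2$ as a $\mathrm{Beta}(1,M-1)$ variable, so that $\mathbb{P}(|\langle u,g\rangle|^2 > t) = (1-t)^{M-1}$ for all $t$, uniformly in $\lambda$.

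For part (a) I would set $X = \big|\{\lambda : |\langle x,\pi(\lambda)g\rangle| < c/\sqrt{M}\}\big| = \sum_{\lambda\in\Lambda} I_\lambda$, a sum of identically distributed indicators with common mean $p_a = 1 - (1-c^2/M)^{M-1}$. The Bernoulli inequality gives $p_a \le (M-1)c^2/M < c^2$, hence $\mathbb{E}[X] < |\Lambda|c^2$. The one genuine difficulty is that the $I_\lambda$ are \emph{not independent}, being all functions of the single random window $g$. I would nonetheless control the variance crudely but sufficiently by Cauchy--Schwarz on the covariances, $\mathrm{Cov}(I_\lambda, I_{\lambda'}) \le \sqrt{\Var(I_\lambda)\Var(I_{\lambda'})} = p_a(1-p_a) \le p_a$, which gives $\Var(X) \le |\Lambda|^2 p_a < |\Lambda|^2 c^2$. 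Chebyshev's inequality then yields, with probability at least $1 - 1/k^2$, the bound $X < \mathbb{E}[X] + k\sqrt{\Var(X)} < |\Lambda|(c^2 + kc)$.

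Part (b) follows the identical template with $Y = \big|\{\lambda : |\langle x,\pi(\lambda)g\rangle| > K/\sqrt{M}\}\big|$ and common mean $p_b = (1-K^2/M)^{M-1}$. The same covariance estimate gives $\Var(Y)\le |\Lambda|^2 p_b$, so Chebyshev produces $Y < |\Lambda| p_b + k|\Lambda|\sqrt{p_b}$ with probability at least $1 - 1/k^2$. To match the stated bound it then suffices to establish the exponential tail estimate $p_b = (1-K^2/M)^{M-1} \le \tfrac{8}{\pi}\, e^{-K^2}$, for then $\sqrt{p_b} \le \sqrt{8/\pi}\, e^{-K^2/2} = \tfrac{2\sqrt 2}{\sqrt \pi}\, e^{-K^2/2}$ and both terms fall into place.

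I expect this last estimate to be the main obstacle. The naive bound $(1-K^2/M)^{M-1} \le e^{-K^2(M-1)/M} \le e\cdot e^{-K^2}$ has the wrong constant, since $e > 8/\pi$, so a sharper argument is required: either a direct calculus estimate showing that $f(t) = tM + (M-1)\log(1-t)$ is maximized at $t = 1/M$ with $f(1/M) \le \log(8/\pi)$ for all $M \ge 2$, or a comparison with a complex Gaussian in which $\mathbb{P}(|\langle u,g\rangle|^2 > K^2/M)$ is measured against $\mathbb{P}(|\mathbb{C}\mathcal{N}(0,1)|^2 > K^2) = e^{-K^2}$ together with explicit control of the normalizer $\|z\|_2^2$. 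Apart from this constant-chasing, the conceptual heart of the whole argument is the realization that the dependence among the frame coefficients can be absorbed into the crude variance bound $\Var \le |\Lambda|^2 p$ without destroying the conclusion, precisely because the Chebyshev deviation term $k|\Lambda|\sqrt{p}$ is of the same order in $|\Lambda|$ as the target.
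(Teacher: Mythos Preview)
Your proposal is correct and follows exactly the template the paper uses: show that each $|\langle x,\pi(\lambda)g\rangle|$ has the same marginal law, bound the single-coefficient tail probability $p$, control the variance of the counting variable by the crude estimate $\Var\le |\Lambda|^2 p$, and finish with Chebyshev. The paper justifies the identical-distribution step by arguing that $\pi(\lambda)g$ is again uniform on $\mathbb{S}^{M-1}$, and it bounds the cross terms in the second moment via $\mathbb{P}(A\cap B)\le \mathbb{P}(A)$ rather than your Cauchy--Schwarz on covariances; both routes land on the same $|\Lambda|^2 p$.

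The one substantive difference is in computing $p$. The paper does this by surface-area integrals on $\mathbb{S}^{2M-1}_{\mathbb{R}}$, which in part~(b) leads to a page-long calculation to extract the constant $8/\pi$. Your identification $|\langle u,g\rangle|^2\sim\mathrm{Beta}(1,M-1)$ gives the exact formulas $p_a = 1-(1-c^2/M)^{M-1}$ and $p_b=(1-K^2/M)^{M-1}$ immediately and is genuinely cleaner. Your proposed calculus check for the constant also works: $f(t)=Mt+(M-1)\log(1-t)$ has its maximum at $t=1/M$, and $f(1/M)=1+(M-1)\log(1-1/M)$ is decreasing in $M$ with value $1-\log 2\approx 0.307$ at $M=2$, well below $\log(8/\pi)\approx 0.935$. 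So the ``main obstacle'' you flag is in fact a two-line verification, and your argument is shorter than the paper's while proving the identical statement.
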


The proof of this result is presented in the appendix.

\begin{remark}
A similar result can be shown for a Gabor frame with a window whose entries are independent Gaussian random variables. The proof in this case involves the same steps as the proof of Theorem \ref{number_of_small_meas}. 
\end{remark}

Theorem \ref{number_of_small_meas} is a non-uniform result in the sense that the proven bounds hold with high probability for each \emph{individual} $x$. Note that this does not imply that the same bounds will hold \emph{simultaneously for all} $x\in \mathbb{S}^{M-1}$ with high probability. We give a uniform bound  for the number of large frame coefficients in the following result, see also \cite{salanevich2017geometric}.

\begin{theorem}\label{number_of_big_meas_uniform}
Consider a Gabor frame $(g,\mathbb{Z}_M\times\mathbb{Z}_M)$ with window $g$ whose entries are independent Gaussian random variables with zero mean and variance $\frac{1}{\sqrt{M}}$. Then, for some suitably chosen numerical constants $c, c_1 >0$,
\begin{equation*}
\left|\left\lbrace\lambda\in \mathbb{Z}_M\times\mathbb{Z}_M \text{, s.t. } |\langle x, \pi(\lambda)g \rangle|> \sqrt{\frac{3}{2c}}\frac{\log^2M}{\sqrt{M}}\right\rbrace\right| < \frac{cM}{\log^4 M},
\end{equation*}
\noindent for all $x\in \mathbb{S}^{M-1}$, with probability at least $1 - e^{-c_1\log^3 M}$.
\end{theorem}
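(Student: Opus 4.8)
The plan is to convert this uniform statement about large frame coefficients into a uniform bound on the operator norms of the column submatrices of the Gabor synthesis matrix $\Phi=(g,\mathbb{Z}_M\times\mathbb{Z}_M)$. Write $\tau=\sqrt{\tfrac{3}{2c}}\,\tfrac{\log^2 M}{\sqrt M}$ for the threshold and $s=\tfrac{cM}{\log^4 M}$ for the target count, and note the numerical coincidence $s\tau^2=\tfrac32$ (this is exactly why the constant in the threshold is chosen as $\sqrt{3/(2c)}$). Suppose the event in the theorem fails, i.e.\ there is some $x\in\mathbb{S}^{M-1}$ for which at least $s$ of the coefficients $|\langle x,\pi(\lambda)g\rangle|$ exceed $\tau$. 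Choosing any index set $S$ of size $s$ among these indices, I would estimate
\begin{equation*}
\sigma_{\max}(\Phi_S)^2 \ \ge\ \|\Phi_S^*x\|_2^2 \ =\ \sum_{\lambda\in S}|\langle x,\pi(\lambda)g\rangle|^2 \ >\ s\tau^2 \ =\ \tfrac32 .
\end{equation*}
Hence it suffices to show that, with probability at least $1-e^{-c_1\log^3 M}$, every submatrix $\Phi_S$ with $|S|=s$ satisfies $\sigma_{\max}(\Phi_S)^2\le\tfrac32$.

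Next I would rephrase this submatrix bound in restricted-isometry form. Since $\sup_{|S|=s}\sigma_{\max}(\Phi_S)^2=\sup_{\|v\|_0\le s,\ \|v\|_2=1}\|\Phi v\|_2^2$, and since after the normalization that makes the full Gabor system an isometry on sparse vectors in expectation one has $\mathbb{E}\|\Phi v\|_2^2=\|v\|_2^2$, the desired estimate is precisely the upper restricted isometry property of $\Phi$ at sparsity $s$ with constant $\delta_s\le\tfrac12$. Writing $\Phi v=T_v g$ with $T_v=\sum_\lambda v_\lambda\pi(\lambda)$, each $\|\Phi v\|_2^2=g^*T_v^*T_vg$ is a Gaussian chaos, and for $s$-sparse unit $v$ one has the crude but decisive bounds $\|T_v\|_{\mathrm{op}}\le\sum_{\lambda\in S}|v_\lambda|\le\sqrt s$ and $\|T_v\|_F^2=M\|v\|_2^2=M$ (using $\operatorname{Tr}(\pi(\lambda')^*\pi(\lambda))=M\,\delta_{\lambda\lambda'}$). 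Thus, after normalization, the operator- and Frobenius-scale parameters governing the chaos are both small, of order $s/M=c/\log^4 M$.

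The heart of the argument is then to control $\sup_{\|v\|_0\le s,\ \|v\|_2=1}\big|\,\|\Phi v\|_2^2-1\big|$ \emph{uniformly} in $v$. This is exactly the restricted isometry property for time–frequency structured random matrices, established by Pfander, Rauhut and Tropp and, with the sharp logarithmic exponents, by Krahmer, Mendelson and Rauhut through their bound on suprema of chaos processes in terms of Talagrand's $\gamma_2$ functional of the set $\{T_v:\|v\|_0\le s\}$. That analysis yields $\delta_s\le\tfrac12$ with probability at least $1-e^{-c_1\log^3 M}$ provided $s\le cM/\log^4 M$; the two logarithmic powers, $\log^4$ in the admissible sparsity and $\log^3$ in the tail, come directly from the entropy integral of $\{T_v\}$. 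Substituting this into the reduction above gives the claimed uniform estimate.

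I expect the real difficulty to reside entirely in this chaos-supremum bound. A direct attack—union bounding a Hanson–Wright estimate over all $\binom{M^2}{s}$ index sets together with a net of each $s$-dimensional sphere—fails, since there are $e^{\Theta(s\log\log M)}$ subsets while the per-vector concentration is only $e^{-\Theta(\log^4 M)}$, and $s\log\log M\sim cM\log\log M/\log^4 M$ dwarfs $\log^4 M$ for large $M$. The generic chaining / $\gamma_2$ machinery is therefore not a convenience but a necessity, and verifying that the $\gamma_2$ functional of the time–frequency combinations $\{T_v\}$ obeys the requisite entropy estimate—equivalently, that the Gabor system is sufficiently incoherent at sparsity level $s=cM/\log^4 M$—is the main obstacle.
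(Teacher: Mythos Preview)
Your proposal is correct and follows essentially the same route as the paper: both arguments reduce the uniform statement to the upper restricted isometry bound $\|\Phi v\|_2^2\le\tfrac32\|v\|_2^2$ for all $s$-sparse $v$ with $s=cM/\log^4 M$, and both invoke the Krahmer--Mendelson--Rauhut chaos-supremum result (Theorem~5.1 in \cite{krahmer2014suprema}) to obtain this with the stated probability. The only cosmetic difference is that the paper packages the reduction via a phase vector $v_x$ supported on $S_x$ and the $\ell_1$ sum $x^*\Phi v_x=\sum_{\lambda\in S_x}|\langle x,\pi(\lambda)g\rangle|$ together with Cauchy--Schwarz, whereas you pass directly through $\|\Phi_S^*x\|_2^2$; both are one-line consequences of the same operator-norm bound on $\Phi_S$.
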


\begin{proof}
Let $g$ be a random vector with $g(m)\sim\text{i.i.d.}~\mathcal{CN}\left(0,\frac{1}{\sqrt{M}}\right)$, $m\in\mathbb{Z}_M$, and let $\Phi = \Phi_{\mathbb{Z}_M\times \mathbb{Z}_M}$ be an $M\times M^2$ matrix whose columns are $\pi(\lambda)g$, $\lambda\in \mathbb{Z}_M\times \mathbb{Z}_M$. Fix $s\in \{1,\dots, M^2\}$ and for any $x\in \mathbb{S}^{M-1}$ denote by $S_x$ the set of $\lambda \in \mathbb{Z}_M\times \mathbb{Z}_M$ corresponding to the $s$ biggest in modulus frame coefficients of $x$ with respect to the Gabor frame $(g,\mathbb{Z}_M\times\mathbb{Z}_M)$. Then, for the phase vector $v_x\in \mathbb{C}^{M^2}$ defined by 
\begin{equation*}
v_x(\lambda)  = \left\lbrace\begin{array}{ll}
\frac{\langle x, \pi(\lambda)g\rangle}{|\langle x, \pi(\lambda)g\rangle|}, & \lambda\in S_x\\
0, & \text{otherwise},
\end{array}\right.
\end{equation*}
\noindent we have
\begin{equation*}
x^*\Phi v_x = \sum_{\lambda\in S_x}|\langle x, \pi(\lambda)g\rangle|.
\end{equation*}
Applying the Cauchy–Schwarz inequality to $x^*\Phi v_x = \langle \Phi v_x, x \rangle$, we obtain
\begin{equation*}
\sum_{\lambda\in S_x}|\langle x, \pi(\lambda)g\rangle|\le ||x||_2||\Phi v_x||_2 = ||\Phi v_x||_2.
\end{equation*}
\noindent Note that $v_x$ is an $s$-sparse vector with $||v_x||_2 = \sqrt{s}$. Then, if $s = \frac{cM}{\log^4 M}$ for a suitably chosen numerical constant $c>0$, we have 
\begin{equation*}
\frac{1}{2}||v||_2^2\le ||\Phi v||_2^2\le \frac{3}{2}||v||^2_2,
\end{equation*}
\noindent for any $s$-sparse vector $v\in \mathbb{C}^{M^2}$ with probability at least $1 - e^{-c_1\log^3 M}$, where $c_1>0$ depends only on $c$, see \cite[Theorem~5.1]{krahmer2014suprema}. Thus
\begin{equation*}
\sum_{\lambda\in S_x}|\langle x, \pi(\lambda)g\rangle|\le ||\Phi v_x||_2\le \sqrt{\frac{3s}{2}}
\end{equation*}
\noindent with probability at least $1 - e^{-c_1\log^3 M}$. It follows that with the same probability,
\begin{equation*}
\min_{\lambda\in S_x} |\langle x, \pi(\lambda)g\rangle| \le \sqrt{\frac{3}{2s}} = \sqrt{\frac{3}{2c}}\frac{\log^2M}{\sqrt{M}}.
\end{equation*}
In other words, with probability at least $1 - e^{-c_1\log^3 M}$, for any $x\in \mathbb{S}^{M-1}$\linebreak all except at most $\frac{cM}{\log^4 M} - 1$ frame coefficients are in modulus \linebreak bigger then~$\sqrt{\frac{3}{2c}}\frac{\log^2M}{\sqrt{M}}$.
\end{proof}

Since Theorem \ref{number_of_big_meas_uniform} holds for the full Gabor frame $(g,\mathbb{Z}_M\times\mathbb{Z}_M)$, it also holds for any its subframe $(g, \Lambda)$, $\Lambda\subset \mathbb{Z}_M$. Also, note that while Theorem \ref{number_of_big_meas_uniform} gives a better bound on the number of large frame coefficients than Theorem \ref{number_of_small_meas} (b), it gives a slightly weaker bound on the modulus of the remaining coefficients, namely, $\frac{C\log^2 M}{\sqrt{M}}$ instead of $\frac{C}{\sqrt{M}}$.

\begin{remark}
Note that Theorem 5.1 in \cite{krahmer2014suprema} is formulated for a window $g$ with independent mean-zero, variance one, \emph{$L$-subgaussian} entries. Thus, Theorem \ref{number_of_big_meas_uniform} is also true in this more general case.
\end{remark}

\subsection{Reconstruction from noisy measurements}\label{alg_noisy}
To obtain a modification to Algorithm \ref{noiseless_case_alg} that leads to robust reconstruction, we may assume, without loss of generality, that the signal $x$ lies on the complex unit sphere $\mathbb{S}^{M-1}\subset \mathbb{C}^M$. As mentioned before, instead of deleting vertices with zero weight, a portion of vertices with small weights should be deleted in the first step of the reconstruction algorithm. As shown later, very large measurements can also prevent stable reconstruction, so we delete respective vertices as well. To delete vertices from the graph of measurements, we use Algorithm \ref{deleting_small_and_large}.\\

\begin{algorithm}[H]
    \SetKwInOut{Input}{Input}
    \SetKwInOut{Output}{Output}

    \Input{graph $G = (\Lambda, E)$ with weighted vertices $V$, parameters $\alpha$, $\beta$}
    \Output{graph $G'$ with more ``flat'' vertex weights}
   \For{$i = 0$ to $(1 - \alpha)|\Lambda|$}
      {
        find $\lambda\in \Lambda$ with the smallest value of $b_\lambda$ and delete it from $G$\;
      }
      \For{$j = 0$ to $(1 - \beta)|\Lambda|$}
      {
        find $\lambda\in \Lambda$ with the largest value of $b_\lambda$ and delete it from $G$.
      }
    \caption{\label{deleting_small_and_large} deleting ``small'' and ``large'' vertices}
\end{algorithm}

%\begin{framed}
%{\sc ALGORITHM 2 (deleting ``small'' and ``large'' vertices)}\par 
%\begin{center}
%\begin{tabular}{ll}
%{\bf Input:} & Graph $G = (\Lambda, E)$ with weighted vertices $V$, parameters $\alpha$, $\beta$.\\
%{\bf For $i = 0$ to $(1 - \alpha)|\Lambda|$} & Find $\lambda\in \Lambda$ with the smallest value of $b_\lambda$;\\
%& delete the vertex $\lambda$ from $G$.\\
%{\bf For $i = 0$ to $(1 - \beta)|\Lambda|$} & Find $\lambda\in \Lambda$ with the largest value of $b_\lambda$;\\
%& delete the vertex $\lambda$ from $G$.\\
%{\bf Output:} & Graph $G'$ with more ``flat'' vertex weights.
%\end{tabular}
%\end{center}
%\end{framed}

Let $G'' = (\Lambda'', E'')$ be a subgraph of $G'$ and let $A$ be the weighted adjacency matrix of the graph $G''$ given by
\begin{equation}\label{weighred_adj_matr_noisy}
A(\lambda_1,\lambda_2) = 
\left\{ 
\begin{array}{cl}
\dfrac{\overline{\langle x, \pi (\lambda_1)g\rangle} \langle x, \pi (\lambda_2)g\rangle + \varepsilon_{\lambda_1\lambda_2}}{\left|\overline{\langle x, \pi (\lambda_1)g\rangle} \langle x, \pi (\lambda_2)g\rangle + \varepsilon_{\lambda_1\lambda_2}\right|}, & (\lambda_1,\lambda_2)\in E''   \\
0,  & (\lambda_1,\lambda_2)\notin E'',
\end{array}
\right.
\end{equation}
\noindent where $\lambda_1,\lambda_2\in \Lambda''$ and $\varepsilon_{\lambda_1 \lambda_2} = \frac{1}{3}\sum_{t = 0}^2\omega^t\nu_{\lambda_1\lambda_2 t}$. Then $A(\lambda_i,\lambda_j)$ can be considered as an approximation of the relative phase between frame coefficients corresponding to $\Lambda''$.

Noise might accumulate while passing from one vertex to another in the phase propagation process. Thus, we seek an efficient method to reconstruct the phases of the vertex frame coefficients using measured relative phases \eqref{weighred_adj_matr_noisy}. For this purpose, we shall use the \emph{angular synchronization algorithm}~\cite{singer, mixon1}.
%\begin{algorithm}
%    \SetKwInOut{Input}{Input}
%    \SetKwInOut{Output}{Output}
%
%    \Input{graph $G'' = (\Lambda'',E'')$ with weighted edges}
%    \Output{vector $\tilde{\upsilon}$ approximating (up to a global phase) the vector of true phases of frame coefficients}
%    set $A$ to be a weighted adjacency matrix defined in (\ref{weighred_adj_matr_noisy})\;
%    set $D = \diag (d_\lambda)_{\lambda\in \Lambda''}$, where $d_\lambda$ is the degree of the vertex $\lambda$\;
%    compute $L_1= I-D^{-1/2}\bar{A}D^{-1/2}$\;
%    compute the eigenvector $u$ corresponding to the smallest eigenvalue $\alpha_1$ of $L_1$\;
%    set $\tilde{\upsilon}_\lambda = \frac{u_\lambda}{|u_\lambda|}$, $\lambda\in \Lambda$.
%    \caption{\label{angular_synch} Angular synchronization}
%\end{algorithm}
%Let $G'' = (\Lambda'', E'')$ be a subgraph of $G'$ and let $A$ be the weighted adjacency matrix of the graph $G''$ given by
%\begin{equation}\label{weighred_adj_matr_noisy}
%A(\lambda_1,\lambda_2) = 
%\left\{ 
%\begin{array}{cl}
%\dfrac{\overline{\langle x, \pi (\lambda_1)g\rangle} \langle x, \pi (\lambda_2)g\rangle + \varepsilon_{\lambda_1\lambda_2}}{\left|\overline{\langle x, \pi (\lambda_1)g\rangle} \langle x, \pi (\lambda_2)g\rangle + \varepsilon_{\lambda_1\lambda_2}\right|}, & (\lambda_1,\lambda_2)\in E''   \\
%0,  & (\lambda_1,\lambda_2)\notin E'',
%\end{array}
%\right.
%\end{equation}
%\noindent where $\lambda_1,\lambda_2\in \Lambda''$ and $\varepsilon_{\lambda_1 \lambda_2} = \frac{1}{3}\sum_{t = 0}^2\omega^t\nu_{\lambda_1\lambda_2 t}$. 
\longhide{
Let $\upsilon_{\lambda_i} = \frac{\langle x, \pi (\lambda_i)g\rangle}{|\langle x, \pi (\lambda_i)g\rangle|}$, $\lambda_i\in \Lambda''$, be the true phases of the frame coefficients. Then $A(\lambda_i,\lambda_j)$ can be considered as an approximation of the relative phase $\omega_{\lambda_i \lambda_j} = \upsilon_{\lambda_i}^{-1}\upsilon_{\lambda_j}$. The vector \linebreak$\tilde{\upsilon} = \{\tilde{\upsilon}_{\lambda}\}_{\lambda\in \Lambda}$ given by
\begin{equation}\label{minimize}
\begin{split}
& \tilde{\upsilon} = \arg\min_{\substack{|u_\lambda| = 1,\\ \lambda\in \Lambda''}}\sum_{\{\lambda_i, \lambda_j\} \in E''}{|u_{\lambda_j} - A(\lambda_i, \lambda_j)u_{\lambda_i}|^2} = \arg\min_{\substack{|u_\lambda| = 1,\\ \lambda\in \Lambda''}} u^*(D - \bar{A})u,
\end{split}
\end{equation}
\noindent where $D$ is the diagonal matrix of vertex degrees and $\bar{A}$ is a componentwise conjugate of $A$. The vector $\tilde{\upsilon}$ is an approximation of $\upsilon$. Note that we assume  $|u_{\lambda}| = 1$, $\lambda\in \Lambda''$, and thus the quantity $u^*Du = \sum_{\lambda\in \Lambda''}{d_{\lambda}|u_{\lambda}|^2} = \sum_{\lambda\in \Lambda''}{d_{\lambda}}$ does not vary with $u$. Minimizing the above quantity is equivalent to minimizing
\begin{equation*}
\frac{u^*(D - \bar{A})u}{u^*Du} = \frac{(D^{1/2}u)^*(I - D^{-1/2}\bar{A}D^{-1/2})(D^{1/2}u)}{||D^{1/2}u||_2^2},
\end{equation*}
\noindent which is larger than or equal to the smallest eigenvalue $\alpha_1$ of the \emph{connection Laplacian} matrix $L_1 = I - D^{-1/2}\bar{A}D^{-1/2}$. Equality is achieved if $D^{1/2}u$ is a corresponding eigenvector.\par 

Note that in the case when $G''$ is connected and no noise is present, (\ref{minimize}) has a unique (up to a global phase) minimizer. 
}
The following result shows robustness of the angular synchronization algorithm in the presence of noise~ \cite{mixon1, bandeira}.

\begin{theorem}\label{angular_synchronization}
Consider a graph $G = (\Lambda'', E'')$ with spectral gap $\tau > 0$, and define $||\theta||_{\mathbb{T}} = \min_{k\in \mathbb{Z}}|\theta - 2\pi k|$ for all angles $\theta\in \mathbb{R}/2\pi \mathbb{Z}$. Then, given the weighted adjacency matrix $A$ as in (\ref{weighred_adj_matr_noisy}), angular synchronization algorithm outputs $\tilde{\upsilon}\in \mathbb{C}^{|V|}$ with unit-modulus entries, such that for some phase ${\theta\in \mathbb{R}/2\pi \mathbb{Z}}$,
\begin{equation*}
\sum_{\lambda\in \Lambda''} ||\arg(\tilde{\upsilon}_\lambda) - \arg(\langle x, \pi(\lambda)g \rangle) - \theta||_{\mathbb{T}}^2\le \frac{C||\varepsilon||^2}{\tau^2 P^2},
\end{equation*}
\noindent where $P = \min_{(\lambda_1, \lambda_2)\in E''}|\overline{\langle x, \pi(\lambda_1)g \rangle} \langle x, \pi(\lambda_2)g \rangle + \varepsilon_{\lambda_1 \lambda_2}|$ and $C$ is a universal constant.
\end{theorem}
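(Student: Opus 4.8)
The plan is to compare the output $\tilde\upsilon$ of Algorithm~4 with the ground-truth phase vector $\upsilon=(\upsilon_\lambda)_{\lambda\in\Lambda''}$, $\upsilon_\lambda=\langle x,\pi(\lambda)g\rangle/|\langle x,\pi(\lambda)g\rangle|$, and to linearize the problem through a \emph{gauge transformation} that turns the noiseless connection Laplacian into the ordinary Laplacian of $G$. Denote by $A_0$ the matrix (\ref{weighred_adj_matr_noisy}) with $\varepsilon\equiv 0$; on every edge $A_0(\lambda_1,\lambda_2)=\omega_{\lambda_1\lambda_2}=\overline{\upsilon_{\lambda_1}}\,\upsilon_{\lambda_2}$. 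Writing $\Upsilon=\diag(\upsilon_\lambda)$ and letting $B$ be the ordinary $0/1$ adjacency matrix of $G$, this reads $\bar A_0=\Upsilon B\Upsilon^{*}$, so that $I-D^{-1/2}\bar A_0 D^{-1/2}=\Upsilon\,(I-D^{-1/2}BD^{-1/2})\,\Upsilon^{*}$. The two matrices are therefore unitarily conjugate; in particular $D^{1/2}\upsilon$ is the bottom eigenvector (eigenvalue $0$) of the noiseless connection Laplacian and, by definition of the spectral gap, the next eigenvalue is separated by $\tau$.

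Next I would control the noise entrywise. Each entry of $A$ and of $A_0$ is the unit-modulus phase of $z_{\lambda_1\lambda_2}=\overline{\langle x,\pi(\lambda_1)g\rangle}\langle x,\pi(\lambda_2)g\rangle$, respectively of $z_{\lambda_1\lambda_2}+\varepsilon_{\lambda_1\lambda_2}$; the elementary estimate $\bigl|\tfrac{z+\varepsilon}{|z+\varepsilon|}-\tfrac{z}{|z|}\bigr|\le \tfrac{2|\varepsilon|}{|z+\varepsilon|}$ together with the definition of $P$ gives $|A(\lambda_1,\lambda_2)-A_0(\lambda_1,\lambda_2)|\le 2|\varepsilon_{\lambda_1\lambda_2}|/P$ on each edge, hence $\|\bar A-\bar A_0\|_F\le 2\|\varepsilon\|/P$. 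Evaluating the frustration functional of (\ref{minimize}) at the ground truth and using $A_0(\lambda_1,\lambda_2)\upsilon_{\lambda_1}=\upsilon_{\lambda_2}$ gives
\[
\eta(\upsilon):=\sum_{(\lambda_1,\lambda_2)\in E''}\bigl|\upsilon_{\lambda_2}-A(\lambda_1,\lambda_2)\upsilon_{\lambda_1}\bigr|^2=\sum_{(\lambda_1,\lambda_2)\in E''}\bigl|A-A_0\bigr|^2\le \frac{4\|\varepsilon\|^2}{P^2}.
\]

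Since $\tilde\upsilon$ minimizes (\ref{minimize}), its frustration is at most $\eta(\upsilon)$; passing back to the noiseless functional by the $\ell^2$ triangle inequality and the edge bound above, $\eta_0(\tilde\upsilon):=\sum_{E''}|\tilde\upsilon_{\lambda_2}-A_0(\lambda_1,\lambda_2)\tilde\upsilon_{\lambda_1}|^2\le(\sqrt{\eta(\upsilon)}+2\|\varepsilon\|/P)^2\le 16\|\varepsilon\|^2/P^2$. Applying the gauge map $w=\Upsilon^{*}\tilde\upsilon$, whose entries again have modulus $1$, the noiseless frustration becomes the ordinary Dirichlet energy $\eta_0(\tilde\upsilon)=\sum_{(\lambda_1,\lambda_2)\in E''}|w_{\lambda_2}-w_{\lambda_1}|^2=w^{*}(D-B)w$. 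For the $d$-regular $G$ the spectral gap yields $w^{*}(D-B)w\ge d\tau\,\|w_\perp\|^2$, where $w_\perp$ is the component of $w$ orthogonal to the bottom eigenvector $\mathbf 1$ (in the general case one passes to the normalized Laplacian $I-D^{-1/2}BD^{-1/2}$); thus $\|w_\perp\|^2\le 16\|\varepsilon\|^2/(d\tau P^2)$. Writing $w=c\mathbf 1+w_\perp$ and $\theta=\arg c$, the unit-modulus constraint forces $|c|^2=1-\|w_\perp\|^2/|\Lambda''|$, whence $\|w-e^{i\theta}\mathbf 1\|^2\le 2\|w_\perp\|^2$. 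Since $\Upsilon$ is unitary and $\Upsilon\mathbf 1=\upsilon$, translating back gives
\[
\min_{\theta}\|\tilde\upsilon-e^{i\theta}\upsilon\|_2^2=\min_\theta\|w-e^{i\theta}\mathbf 1\|_2^2\le \frac{32\|\varepsilon\|^2}{d\tau P^2}\le \frac{32\|\varepsilon\|^2}{\tau^2 P^2},
\]
the last step using $\tau\le 1\le d$. For unit-modulus numbers the angular and chordal metrics are comparable, $\|\arg\tilde\upsilon_\lambda-\arg\upsilon_\lambda-\theta\|_{\mathbb T}\le \tfrac{\pi}{2}\,|\tilde\upsilon_\lambda-e^{i\theta}\upsilon_\lambda|$, so summing over $\lambda\in\Lambda''$ turns the display into the assertion with a universal constant $C$.

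The backbone above is clean provided $\tilde\upsilon$ is the exact minimizer of (\ref{minimize}). The genuinely delicate point is that the practical Algorithm~4 replaces this NP-hard minimization by its eigenvector relaxation and then rounds the bottom eigenvector of the connection Laplacian to unit modulus; one must verify that the rounded vector still obeys the frustration bound used in the third paragraph. A naive Davis--Kahan estimate for the eigenvector followed by entrywise normalization loses a factor of order $|\Lambda''|/d^2$ in the rounding step, which would destroy the bound in the relevant regime $d=O(\log M)$. The frustration-based argument circumvents this precisely because $w=\Upsilon^{*}\tilde\upsilon$ already has unit-modulus entries, so small Dirichlet energy forces $w$ to be nearly constant directly; making this rounding analysis rigorous is exactly the content of the arguments in \cite{mixon1, bandeira}, which I would invoke here.
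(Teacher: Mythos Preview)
The paper does not actually prove this theorem; it simply states it and cites \cite{mixon1, bandeira} for the proof. Your sketch is in the spirit of those references: the gauge transformation $w=\Upsilon^{*}\tilde\upsilon$ that unitarily conjugates the noiseless connection Laplacian to the ordinary normalized Laplacian, the entrywise perturbation bound $|A(\lambda_1,\lambda_2)-A_0(\lambda_1,\lambda_2)|\le 2|\varepsilon_{\lambda_1\lambda_2}|/P$, and the use of the spectral gap to force near-constancy of $w$ are exactly the ingredients used there.

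You also correctly isolate the one genuine gap. Algorithm~4 outputs the entrywise-normalized bottom eigenvector of $L_1$, not the constrained minimizer of (\ref{minimize}), so the inequality $\eta(\tilde\upsilon)\le\eta(\upsilon)$ that your third paragraph relies on is not available a~priori. In \cite{bandeira, mixon1} this is handled by a Cheeger-type inequality for the connection Laplacian: one first bounds the smallest eigenvalue $\alpha_1$ of $L_1$ by evaluating the Rayleigh quotient at $D^{1/2}\upsilon$ (this gives $\alpha_1\lesssim \|\varepsilon\|^2/(P^2\sum_\lambda d_\lambda)$), and then shows directly that the \emph{rounded} eigenvector has frustration controlled by $\alpha_1$. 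With that substitution for your minimizer step, the remainder of your argument goes through and matches what the paper invokes by citation. One small caveat: $G''$ need not be regular after vertex deletion, so the clean $d\tau$ factor should really be replaced throughout by the normalized-Laplacian version you allude to parenthetically.
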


As Theorem \ref{angular_synchronization} shows, the accuracy of the angular synchronization algorithm depends on the spectral gap of the graph $G''$. To find a subgraph $G''\subset G'$ with spectral gap bounded away from zero, we shall use the \emph{spectral clustering} algorithm \cite{ng, mixon1}.
%\begin{framed}
%{\sc ALGORITHM 3 (spectral clustering)}\par 
%\begin{center}
%{\small
%\begin{tabular}{ll}
%{\bf Input:} & Graph $G' = (V',E')$, parameter $\tau$.\\
%{\bf While $\Spg (G) < \tau$} & Set $D = \diag (d_1,\dots , d_n)$, where $d_i$ is the degree of the $i$th vertex;\\
%& set $A$ to be the adjacency matrix of $G'$;\\
%& compute the Laplacian $L = I-D^{-1/2}AD^{-1/2}$;\\
%& compute the eigenvector $u$ of $L$ with the second smallest eigenvalue $\alpha_2$;\\
%{\bf For $i = 1$ to $|V|/2$} & let $S_i$ be a set of vertices corresponding to $i$ smallest entries of $D^{-1/2}u$;\\
%& set $h_i = \dfrac{|E(S_i, S_i^C)|}{\min \{\sum_{v\in S_i}{\deg v}, ~ \sum_{v\in S_i^C}{\deg v}\}}$; \\
%{\bf end for;} & \\
%& set $G' = G'\setminus S$, where $S= S_i$ with $h_i$ minimal;\\
%{\bf Output:} & Graph $G$ with spectral gap at least $\tau$.
%\end{tabular}
%}
%\end{center}
%\end{framed}
To ensure that $|\Lambda''|\ge M$, we rely on the following result. Its proof is based on the Cheeger inequality for the graph connection Laplacian \cite{bandeira} and can be found in \cite{mixon1}.
\begin{theorem}\label{graph_pruning}
Take $p\ge q\ge \frac{2}{3}$. Consider a regular graph $G = (V,E)$ with spectral gap $\lambda_2 > g(p,q) = 1 - 2(q(1-q) - (1 - p))$ and set $\tau = \frac{1}{8}(\lambda_2 - g(p,q))^2$. Then, after Algorithm \ref{deleting_small_and_large} removes at most $(1-p)|V|$ vertices from $G$, spectral clustering algorithm outputs a subgraph with at least $q|V|$ vertices and spectral gap at least $\tau$.
\end{theorem}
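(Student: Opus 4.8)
The plan is to separate the contributions of the two algorithms and then control the total number of deleted vertices by an edge-counting argument. Since Algorithm 2 deletes at most $(1-p)|V|$ vertices, it leaves a subgraph $G'$ with at least $p|V|$ vertices; because the hypothesis $p\ge q$ is available, it suffices to show that the spectral clustering step (Algorithm 3) discards at most $(p-q)|V|$ \emph{further} vertices before the spectral gap reaches $\tau$, so that the final graph retains at least $q|V|$ vertices. The spectral-gap guarantee on the output is then automatic, since Algorithm 3 halts only once $\Spg\ge\tau$.

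First I would record what a single iteration of Algorithm 3 accomplishes. While the current spectral gap $\alpha_2$ is below $\tau$, the constructive direction of the Cheeger inequality for the normalized Laplacian (the sweep-cut rounding of the Fiedler vector $D^{-1/2}u$, in the connection-Laplacian form used in \cite{bandeira} and \cite{mixon1}) produces a set whose conductance $h_i=|E(S_i,S_i^C)|/\min\{\mathrm{vol}(S_i),\mathrm{vol}(S_i^C)\}$ is at most $\sqrt{2\alpha_2}\le\sqrt{2\tau}$, and Algorithm 3 removes the corresponding smaller side $S$. Consequently every set stripped by Algorithm 3 is a sparse cut of the graph present at that moment, so the number of edges severed when $S$ is deleted is at most $\sqrt{2\tau}\,\mathrm{vol}(S)\le \sqrt{2\tau}\,d|S|$.

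The heart of the argument is to bound $|W|$, where $W=V\setminus U$ collects all deleted vertices and $U$ is the retained set. Writing $W=W_2\cup W_3$ for the vertices removed by Algorithms 2 and 3, I would count the edges of the original graph between $W$ and $U$ in two ways. On one hand, all such edges are severed during the process, so summing the per-step estimates above gives $|E_G(W,U)|\le d|W_2|+\sqrt{2\tau}\,d|W_3|$, where the first term accounts for the at most $(1-p)|V|$ vertices removed by Algorithm 2, each of degree at most $d$. On the other hand, since $G$ is $d$-regular and its spectral gap $\lambda_2$ bounds the modulus of every nontrivial adjacency eigenvalue by $(1-\lambda_2)d$, the expander mixing lemma yields $|E_G(W,U)|\ge d\,\tfrac{|W||U|}{|V|}-(1-\lambda_2)d\sqrt{|W||U|}$; the product $|W||U|/|V|$ is precisely what produces the quadratic term $q(1-q)$ in the threshold. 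Substituting $\sqrt{2\tau}=\tfrac12(\lambda_2-g(p,q))$, bounding $\sqrt{|W||U|}\le |V|/2$, and inserting $|W_2|\le(1-p)|V|$, the two estimates combine into an inequality in $\rho=|W|/|V|$ which, using the monotonicity of $t\mapsto t(1-t)$ on $[0,\tfrac12]$ together with $\lambda_2>g(p,q)$, is violated as soon as $\rho>1-q$. This forces $|W|\le(1-q)|V|$, i.e. $|U|\ge q|V|$.

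I expect the main obstacle to be the bookkeeping of this edge count across the iterative, degree-shrinking removal: volumes and degrees change as vertices disappear, so one must argue that the per-iteration sparse-cut estimates add up without double counting and that the mixing-lemma and Cheeger constants collapse \emph{exactly} into $g(p,q)=1-2(q(1-q)-(1-p))$ and $\tau=\tfrac18(\lambda_2-g(p,q))^2$ rather than into some looser threshold. A secondary point needing care is that the mixing lower bound and the non-monotonicity of $t(1-t)$ require $|W|\le|V|/2$ throughout; since $q\ge\tfrac23$ gives $1-q\le\tfrac13<\tfrac12$, this is consistent with the target, but one should confirm that the procedure never transiently removes more than half the vertices. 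The detailed verification of these constants is carried out in \cite{mixon1}.
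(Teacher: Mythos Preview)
The paper does not give its own proof of this theorem but defers to \cite{mixon1}, noting only that the argument rests on the Cheeger inequality. Your outline recovers exactly that approach---sweep-cut Cheeger to bound the edges each iteration of Algorithm~3 severs, the expander mixing lemma on the original $d$-regular graph to lower-bound $|E_G(W,U)|$, and the combined inequality in $\rho=|W|/|V|$ is indeed violated at $\rho=1-q$ with the stated constants (since $q(1-q)-(1-p)=\tfrac12(1-g(p,q))$ and $\sqrt{2\tau}=\tfrac12(\lambda_2-g(p,q))$ give a discrepancy of $\tfrac12(\lambda_2-g(p,q))\,q>0$)---so the proposal is correct and matches the cited approach.
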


We summarize the above discussion in the following reconstruction algorithm.\\

\begin{algorithm}[H]
    \SetKwInOut{Input}{Input}
    \SetKwInOut{Output}{Output}

    \Input{$F\subset\mathbb{Z}_M$, $C\subset \mathbb{Z}_M$, window $g$; \\ noisy measurements $b$ w.r.t. $\Phi_\Lambda\cup\Phi_E$, given by~(\ref{measurements});\\ parameters $\alpha$, $\beta$,~$\tau$}
    \Output{approximation $\tilde{x}$ of the signal $x$ (up to a global phase)}
    construct graph $G = (\Lambda,E)$ with $\Lambda = F\times \mathbb{Z}_M$ and $E$ as in (\ref{edge_frame})\;
    assign to each $\lambda\in \Lambda$ weight $b_\lambda$\;
    assign to each edge $(\lambda_1, \lambda_2)\in E$ weight $A_{\lambda_1 \lambda_2}$, given in (\ref{weighred_adj_matr_noisy})\;
    run Algorithm \ref{deleting_small_and_large} with parameters $\alpha$, $\beta$ to obtain $G' = (\Lambda',E')$\;
    run \emph{spectral clustering} to find $G'' = (\Lambda'',E'')$ with $\Spg(G'')\ge \tau$\;
    run \emph{angular synchronization} to obtain approximate phases $\{u_\lambda\}_{\lambda\in\Lambda''}$\;
    set $c_\lambda = u_\lambda \sqrt{b_\lambda}, ~ \lambda \in \Lambda''$\;
    reconstruct $\tilde{x} = (\Phi_{\Lambda''} \Phi_{\Lambda''}^*)^{-1}\Phi_{\Lambda''}c$ from $c = \{c_\lambda\}_{\lambda\in \Lambda''}$.\\
    \caption{\label{noisy_reconstruction} Reconstruction in the noisy case}
\end{algorithm}

%\begin{framed}
%{\sc ALGORITHM 5 (reconstruction in the noisy case)}\par 
%\begin{center}
%\begin{tabular}{ll}
%{\bf Input:} & Phaseless noisy measurements $b$ w.r.t. $\Phi_\Lambda\cup\Phi_E$, given by  (\ref{measurements});\\
%& $F<\mathbb{Z}_M$, set $C\subset \mathbb{Z}_M$ and window $g$; parameters $\alpha$, $\beta$, $\tau$.\\
%{\bf Reconstruction:}& Construct graph $G = (\Lambda,E)$ with $\Lambda = F\times \mathbb{Z}_M$ and $E$ as in (\ref{edge_frame});\\
%& assign to each $\lambda\in \Lambda$ weight $b_\lambda$;\\
%& assign to each edge $(\lambda_1, \lambda_2)\in E$ weight $A_{\lambda_1 \lambda_2}$, given in (\ref{weighred_adj_matr_noisy});\\
%& run Algorithm \ref{deleting_small_and_large} with parameters $\alpha$, $\beta$ to obtain $G' = (\Lambda',E')$;\\
%& run Algorithm \ref{spectral_clustering} with parameter $\tau$ to obtain $G'' = (\Lambda'',E'')$;\\
%& run Algorithm \ref{angular_synch} to obtain approximate phases $\{u_\lambda\}_{\lambda\in\Lambda''}$;\\
%& set $c_\lambda = u_\lambda \sqrt{b_\lambda}, ~ \lambda \in \Lambda''$;\\
%& reconstruct $\tilde{x} = (\Phi_{\Lambda''} \Phi_{\Lambda''}^*)^{-1}\Phi_{\Lambda''}c$ from $c = \{c_\lambda\}_{\lambda\in \Lambda''}$.\\
%{\bf Output:} & Approximation $\tilde{x}$ of the initial signal $x$ (up to a global phase).
%\end{tabular}
%\end{center}
%\end{framed}
%
Now we are ready to prove our main result.

%\begin{figure}[t]\center
%\begin{tabular}{cc}
%\includegraphics[width=67mm]{error-dim_u.png}
%&
%\includegraphics[width=67mm]{error-noise-dim_u.png}
%\end{tabular}
%\caption{\label{num_dim}Dependence of the reconstruction error of the Algorithm \ref{noisy_reconstruction} (left) and the error to noise ratio (right) on the signal dimension $M$. Here noise vector had independent normally distributed entries with variance $\sigma = 10^{-3}$. These numerical results suggest that the error to noise ratio does not depend on the signal dimension and is bounded above by a numerical constant.}
%\end{figure}
%
%\begin{figure}[t]\center
%\begin{tabular}{cc}
%\includegraphics[width=67mm]{error-var_u.png}
%&
%\includegraphics[width=67mm]{error_noise-var_u.png}
%\end{tabular}
%\caption{\label{num_var}Dependence of the reconstruction error of Algorithm \ref{noisy_reconstruction} (left) and the error to noise ratio (right) on the variance $\sigma$ of the entries of the noise vector which are selected independently from $\mathcal{C}\mathcal{N}(0,\sigma)$, for fixed signal dimension $M=100$. These numerical results suggest that the reconstruction error grows linearly with the magnitude of noise.}
%\end{figure}

\begin{theorem} \label{stability_main}
Fix $x\in \mathbb{C}^M$ and consider the measurement procedure (\ref{measurements}) described above, with $|F|$ and $d$ sufficiently large. If the noise vector satisfies $\frac{||\nu||_2}{||x||_2^2}\le \frac{C_1}{M}$ for some $C_1$ small enough, then there exists a constant $C''$ such that the estimate $\tilde{x}$ produced by Algorithm \ref{noisy_reconstruction} from the noisy measurements $\{b_j\}_{j = 1}^N$ satisfies with overwhelming probability
\begin{equation*}
\min_{\theta\in [0,2\pi)}||\tilde{x} - e^{i\theta}x||_2^2 \le \frac{C''\sqrt{M}||\nu||_2}{\Delta},
\end{equation*}
\noindent where $\Delta = \min_{\Lambda''\subset \Lambda, |\Lambda''|\ge 2/3|\Lambda|}\sigma_{\min}^2(\Phi_{\Lambda''}^*)$.
\end{theorem}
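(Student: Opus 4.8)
The plan is to track how the reconstruction error accumulates through the four stages of Algorithm 5 and then propagate it through the final dual-frame synthesis; throughout I normalize so that $\|x\|_2 = 1$, which is harmless for the up-to-global-phase error. Writing $m_\lambda = |\langle x, \pi(\lambda)g\rangle|$ and $\upsilon_\lambda = \langle x,\pi(\lambda)g\rangle/m_\lambda$ for the true moduli and phases, the reconstructed coefficients are $c_\lambda = u_\lambda\sqrt{b_\lambda}$, and the triangle-inequality split
\begin{equation*}
|c_\lambda - e^{i\theta}\langle x,\pi(\lambda)g\rangle| \le |\sqrt{b_\lambda} - m_\lambda| + m_\lambda\,|u_\lambda - e^{i\theta}\upsilon_\lambda|
\end{equation*}
isolates a \emph{magnitude error} and a \emph{phase error} on the surviving vertex set $\Lambda''$. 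Since $b_\lambda = m_\lambda^2 + \nu_\lambda$, the first term equals $|\nu_\lambda|/(\sqrt{b_\lambda}+m_\lambda)\le |\nu_\lambda|/m_\lambda$, while the second, using $|u_\lambda - e^{i\theta}\upsilon_\lambda| \le \|\arg u_\lambda - \arg\upsilon_\lambda - \theta\|_{\mathbb T}$ for unit-modulus numbers, is controlled by the output guarantee of angular synchronization.

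The crux of the argument, and the step I expect to be the main obstacle, is to certify that the graph $G'' = (\Lambda'', E'')$ handed to Algorithm 4 is good. I would combine the flatness estimates with the pruning guarantee as follows. By Theorem \ref{number_of_small_meas}(a) the number of vertices with $m_\lambda < c/\sqrt M$ is at most $|\Lambda|(c^2+kc)$, and by Theorem \ref{number_of_big_meas_uniform} (or Theorem \ref{number_of_small_meas}(b)) the number with $m_\lambda$ too large is a small fraction of $|\Lambda|$; choosing the thresholds $\alpha,\beta$ of Algorithm 2 slightly larger than these fractions ensures that after Algorithm 2 every surviving vertex satisfies $c/\sqrt M \le m_\lambda \le K/\sqrt M$, while at most $(1-p)|\Lambda|$ vertices are removed. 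Lemma \ref{bernoulli} gives $\Spg(G)\ge 1-6/\sqrt d$, which for $d$ large exceeds the threshold $g(p,q)$ of Theorem \ref{graph_pruning}; applying that theorem with $q\ge 2/3$ then certifies that Algorithm 3 outputs $G''$ with $|\Lambda''|\ge \tfrac23|\Lambda|\ge M$ and spectral gap at least $\tau = \tfrac18(\Spg(G)-g(p,q))^2$, bounded away from zero. The delicate point is to choose the interdependent parameters $\alpha,\beta,p,q,\tau,|F|,d$ consistently so that all of these inequalities hold at once, and to collect the (high-probability flatness and overwhelming-probability spectral-gap) events by a union bound.

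With $G''$ certified, the error estimates fall out. The noise feeding Algorithm 4 is $\varepsilon_{\lambda_1\lambda_2} = \tfrac13\sum_t\omega^t\nu_{\lambda_1\lambda_2 t}$, so $\|\varepsilon\|^2 \lesssim \|\nu\|_2^2$; the lower bound $P = \min_{E''}|\overline{\langle x,\pi(\lambda_1)g\rangle}\langle x,\pi(\lambda_2)g\rangle + \varepsilon_{\lambda_1\lambda_2}|$ stays $\gtrsim c^2/M$ precisely because $m_{\lambda_i}\ge c/\sqrt M$ on $\Lambda''$ and the hypothesis $\|\nu\|_2\le C_1/M$ with $C_1$ small keeps the perturbation below half the main term. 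Feeding these into Theorem \ref{angular_synchronization} and using $m_\lambda\le K/\sqrt M$ bounds the phase error by $\sum_\lambda m_\lambda^2|u_\lambda - e^{i\theta}\upsilon_\lambda|^2 \lesssim \tfrac{K^2}{M}\cdot\tfrac{\|\varepsilon\|^2}{\tau^2 P^2}\lesssim M\|\nu\|_2^2/\tau^2$, while the magnitude error is handled directly, $\sum_\lambda|\sqrt{b_\lambda}-m_\lambda|^2 \le \sum_\lambda |\nu_\lambda|^2/m_\lambda^2 \le (M/c^2)\|\nu\|_2^2$. Hence $\|c - e^{i\theta}\Phi_{\Lambda''}^*x\|_2^2 \lesssim M\|\nu\|_2^2$.

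Finally I would pass to $\tilde x = (\Phi_{\Lambda''}\Phi_{\Lambda''}^*)^{-1}\Phi_{\Lambda''}c$. Since this canonical dual synthesis map has operator norm $1/\sigma_{\min}(\Phi_{\Lambda''}^*)$ and $|\Lambda''|\ge\tfrac23|\Lambda|$ forces $\sigma_{\min}^2(\Phi_{\Lambda''}^*)\ge\Delta$, we obtain $\min_\theta\|\tilde x - e^{i\theta}x\|_2^2 \le \Delta^{-1}\|c-e^{i\theta}\Phi_{\Lambda''}^*x\|_2^2 \lesssim M\|\nu\|_2^2/\Delta$. Using $\sqrt M\,\|\nu\|_2\le C_1/\sqrt M\le 1$ to replace one factor of $\sqrt M\,\|\nu\|_2$ by $1$ converts $M\|\nu\|_2^2$ into $\sqrt M\,\|\nu\|_2$, which yields the claimed bound $C''\sqrt M\,\|\nu\|_2/\Delta$ and completes the proof.
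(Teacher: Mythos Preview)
Your proposal is correct and follows essentially the same route as the paper: combine the flatness estimate (Theorem \ref{number_of_small_meas}) with the spectral-gap bound (Lemma \ref{bernoulli}) to feed Theorem \ref{graph_pruning}, certify $G''$, invoke Theorem \ref{angular_synchronization}, split $c_\lambda-e^{i\theta}\langle x,\pi(\lambda)g\rangle$ into magnitude and phase parts, and push the resulting $\|\delta\|_2^2$ through the dual-frame synthesis. The only cosmetic difference is that the paper bounds the magnitude error by $(\sqrt{b_\lambda}-m_\lambda)^2\le|\nu_\lambda|$ and then $\|\nu_V\|_1\le\sqrt{|\Lambda|}\,\|\nu\|_2$, obtaining the $\sqrt{M}\,\|\nu\|_2$ term directly rather than via your intermediate $M\|\nu\|_2^2$ bound plus the final conversion step.
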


\begin{proof}
Without loss of generality we can assume that $||x||_2 = 1$. As follows from Lemma \ref{bernoulli}, with overwhelming probability $\Spg(G)\ge 1 - \frac{6}{\sqrt{b}}$. Let us fix parameters $\tau_0 > 0$, $\alpha, \beta \in (0,1)$ and apply Theorem \ref{graph_pruning} with \linebreak$g(p,q) = 1 - 2(q(1-q) - (1 - p)) = 1 - \frac{6}{\sqrt{b}} - \tau_0 < \Spg(G)$. Then, after Algorithm~\ref{deleting_small_and_large} deletes $(1-p)|\Lambda| = (1 - \alpha - \beta)|\Lambda|$ vertices with the smallest and the largest corresponding measurements, we apply spectral clustering algorithm with parameter $\tau = \frac{1}{8}(\Spg(G) - g(p,q))^2\ge \frac{\tau_0^2}{8}$. We obtain a graph $G'' = (V'',E'')$ with $|V''|\ge q|\Lambda|$ and $\Spg(G'')\ge \frac{\tau_0^2}{8}$. \par

Let us specify $q$ now. Since we set $1 - 2(q(1-q) - (1 - p)) = 1 - \frac{6}{\sqrt{b}} - \tau_0$, it follows $q(1-q) = \frac{3}{\sqrt{b}} + (1 - \alpha - \beta)  + \frac{\tau_0}{2}$. If $\tau_0$, $\alpha$, $\beta$, and $b$ are chosen appropriately, we can make $\frac{3}{\sqrt{b}} + (1 - \alpha - \beta )+ \frac{\tau_0}{2} = A \le \frac{2}{9}$. Then \linebreak$1\ge q = \frac{1 + \sqrt{1 - 4A}}{2}\ge \frac{1 + \sqrt{1/9}}{2} = \frac{2}{3}$. This ensures that $q\in \left( \frac{2}{3},1\right)$.\par

Now, after applying angular synchronization algorithm and using Theorem \ref{angular_synchronization}, we obtain a universal constant $C>0$ and phase $\theta\in \mathbb{R}/2\pi \mathbb{Z}$, such that
\begin{equation*}
\sum_{\lambda\in V''} ||\arg(u_\lambda) - \arg(\langle x, \pi(\lambda)g \rangle) - \theta||_{\mathbb{T}}^2\le \frac{C||\varepsilon||_2^2}{\tau^2 P^2},
\end{equation*}
\noindent where $\varepsilon_{\lambda_1 \lambda_2} = \frac{1}{3}\sum_{t = 0}^2\omega^t\nu_{\lambda_1\lambda_2 t}$. Since $||\nu||_2\le \frac{C_1}{M}$, we also have $|\nu_{\lambda_1\lambda_2 t}|\le \frac{C_1}{M}$, for all $\lambda_1,\lambda_2 \in \Lambda''$ and $t\in \{0,1,2\}$, and thus $|\varepsilon_{\lambda_1 \lambda_2}|\le \frac{C_1}{M}$. By the Cauchy-Schwarz inequality we have
\begin{equation*}
||\varepsilon||_2^2 = \sum_{(\lambda_1,\lambda_2)\in E''}|\varepsilon_{\lambda_1\lambda_2}|^2\le \frac{1}{9}\sum_{(\lambda_1,\lambda_2)\in E''}\left(\sum_{i = 0}^{2}|\omega^t|^2\right)\left(\sum_{i = 0}^{2}|\nu_{\lambda_1 \lambda_2 t}|^2\right)\le\frac{1}{3}||\nu||_2^2.
\end{equation*}
Theorem \ref{number_of_small_meas} implies that, for any $k,c>0$ and $\epsilon = c^2$,  $$\left|\left\lbrace\lambda\in \Lambda \text{, s.t. } |\langle x, \pi(\lambda)g \rangle|< \frac{c}{\sqrt{M}}\right\rbrace\right|<|\Lambda|(\epsilon + k\sqrt{2\epsilon})$$ with probability at least $1-1/k^2$. Then, since $|\nu_{\lambda}|\le \frac{C_1}{M}$, on this event $$\left\lbrace\lambda\in \Lambda \text{, s.t. } |\langle x, \pi(\lambda)g \rangle|^2 + \nu_\lambda < \tfrac{c^2}{M} - \tfrac{C_1}{M}\right\rbrace \subset \left\lbrace\lambda\in \Lambda \text{, s.t. } |\langle x, \pi(\lambda)g \rangle|^2 < \tfrac{c^2}{M}\right\rbrace,$$ that is, $$\left|\left\lbrace\lambda\in \Lambda \text{, s.t. } |\langle x, \pi(\lambda)g \rangle|^2 + \nu_\lambda < \frac{c^2 - C_1}{M}\right\rbrace\right| <|\Lambda|(\epsilon + k\sqrt{2\epsilon}).$$
We set $\alpha = 1 - (\epsilon + k\sqrt{2\epsilon})$ and delete $|\Lambda|(\epsilon + k\sqrt{2\epsilon})$ vertices with the smallest corresponding measurements. For the remaining coefficients we have $|\langle x, \pi(\lambda)g \rangle|\ge \frac{\tilde{c}}{\sqrt{M}}$ for some constant $\tilde{c}>0$, provided $c^2$ is sufficiently larger then $C_1$. Similarly, setting $\beta = 1 - (\eta + k\sqrt{2\eta})$, we delete $|\Lambda|(\eta + k\sqrt{2\eta})$ vertices with the largest corresponding measurements, and for the remaining vertices, with probability at least $1 - \frac{1}{k^2}$, we have $|\langle x, \pi(\lambda)g \rangle|\le \frac{\tilde{K}}{\sqrt{M}}$, for some constant $\tilde{K}>0$.\par

Now, with probability at least $1-\frac{2}{k^2}$, after applying Algorithms 1 and 2, we have $\frac{\tilde{c}}{\sqrt{M}}\le |\langle x, \pi(\lambda)g \rangle|\le \frac{\tilde{K}}{\sqrt{M}}$ for all $\lambda\in V''$. Thus
\begin{equation*}
\begin{split}
P & = \min_{(\lambda_1,\lambda_2)\in E''}|\overline{\langle x, \pi(\lambda_1)g \rangle} \langle x, \pi(\lambda_2)g \rangle + \varepsilon_{\lambda_1\lambda_2}|\\
& \ge \left|\frac{\tilde{c}^2}{M}-\max_{(\lambda_1,\lambda_2)\in E''}|\varepsilon_{\lambda_1 \lambda_2}|\right| \ge \left|\frac{\tilde{c}^2}{M} - \frac{C_1}{M}\right|\ge \frac{\tilde{C}}{M}.
\end{split}
\end{equation*}
Summing up, we obtain 
\begin{equation*}
\sum_{\lambda\in V''} ||\arg(u_\lambda) - \arg(\langle x, \pi(\lambda)g \rangle) - \theta||_{\mathbb{T}}^2\le \frac{64C||\nu||_2^2M^2}{3\tau_0^4\tilde{C}^2}.
\end{equation*}

For every $\lambda\in V''$, we denote the obtained estimate of the corresponding frame coefficient by $c_\lambda = u_\lambda\sqrt{b_\lambda} = u_\lambda\sqrt{|\langle x,\pi(\lambda)g \rangle|^2 + \nu_\lambda}$. Also, set \linebreak$\delta_\lambda = c_\lambda - e^{i\theta}\langle x,\pi(\lambda)g \rangle$ and $\zeta_\lambda = \sqrt{b_\lambda} - |\langle x,\pi(\lambda)g \rangle|$. Then
\begin{align*}
|\delta_\lambda| & = \left|\sqrt{b_\lambda}e^{i \arg(u_\lambda)} - \sqrt{b_\lambda}e^{i (\theta + \arg(\langle x,\pi(\lambda)g \rangle))} + \zeta e^{i (\theta + \arg(\langle x,\pi(\lambda)g \rangle))} \right|\\
& \le \sqrt{b_\lambda}\left| e^{i (\arg(u_\lambda) - \arg(\langle x,\pi(\lambda)g \rangle) - \theta)} - 1\right| + |\zeta_\lambda|\\
& \le \sqrt{b_\lambda}||\arg(u_\lambda) - \arg(\langle x,\pi(\lambda)g \rangle) - \theta||_\mathbb{T} + |\zeta_\lambda|.
\end{align*}
Further, since $|\delta_\lambda|^2 \le 2b_\lambda||\arg(u_\lambda) - \arg(\langle x,\pi(\lambda)g \rangle) - \theta||_\mathbb{T}^2 + 2|\zeta_\lambda|^2$, it follows
\begin{equation*}
||\delta||_2^2 \le 2\sum_{\lambda\in V''}b_\lambda||\arg(u_\lambda) - \arg(\langle x,\pi(\lambda)g \rangle) - \theta||_\mathbb{T}^2 + 2\sum_{\lambda\in V''}|\zeta_\lambda|^2.
\end{equation*}
\noindent Using the fact that, for any $a,b\in \mathbb{R}$, $(a-b)^2\le |a^2 - b^2|$, we obtain \linebreak$\zeta_\lambda^2 = (\sqrt{b_\lambda} - |\langle x,\pi(\lambda)g \rangle|)^2\le~|\nu_\lambda|$. And, since $b_\lambda\le \frac{\tilde{K}^2}{M}$ and $||.||_1\le \sqrt{|\Lambda''|}||.||_2$,
\begin{equation*}
\begin{split}
||\delta||_2^2 & \le \frac{2\tilde{K}^2}{M}\sum_{\lambda\in V''}||\arg(u_\lambda) - \arg(\langle x,\pi(\lambda)g \rangle) - \theta||_\mathbb{T}^2 + 2||\nu_V||_1\\
& \le \frac{128C\tilde{K^2}||\nu||_2^2M}{3\tau_0^4\tilde{C}^2} + 2\sqrt{|F|M}||\nu||_2.
\end{split}
\end{equation*}
Since $||\nu||_2\le \frac{C_1}{M}$, we have $||\delta||_2^2 \le C''\sqrt{M}||\nu||_2$.
%\begin{equation*}\label{delta_bound}
%||\delta||_2^2 \le C''\sqrt{M}||\nu||_2.
%\end{equation*}

For the estimate $\tilde{x}$ of $x$, constructed by Algorithm \ref{noisy_reconstruction}, we have 
\begin{equation*}
\tilde{x} = (\Phi_{\Lambda''}\Phi_{\Lambda''}^*)^{-1}\Phi_{\Lambda''}c = e^{i\theta}x + (\Phi_{\Lambda''}\Phi_{\Lambda''}^*)^{-1}\Phi_{\Lambda''}\delta.
\end{equation*}
\noindent As such, we obtain the desired bound on the reconstruction error
\begin{equation*}
||\tilde{x} - e^{i\theta}x||_2^2 \le ||(\Phi_{\Lambda''}\Phi_{\Lambda''}^*)^{-1}\Phi_{\Lambda''}||_2^2||\delta||_2^2 = \frac{||\delta||_2^2}{\sigma_{\min}^2(\Phi_{\Lambda''}^*)} \le \frac{C''\sqrt{M}||\nu||_2}{\sigma_{\min}^2(\Phi_{\Lambda''}^*)}.
\end{equation*}
\end{proof}

\section{Numerical results on the robustness of Algorithm \ref{noisy_reconstruction}}\label{numerical_robustness}

In this section we numerically investigate the behavior of the constructed phase retrieval algorithm for time-frequency structured measurements (Algorithm~\ref{noisy_reconstruction}). In~particular, we use numerical simulations to demonstrate robustness of Algorithm~\ref{noisy_reconstruction} in the presence of noise and to investigate dependencies of the reconstruction error on various parameters.

Recall that, to construct the measurement frame $\Phi = \Phi_\Lambda \cup \Phi_E\subset \mathbb{C}^M$, we first introduce a random graph of measurements $G = (\Lambda, E)$, where ${\Lambda = F\times \mathbb{Z}_M}$ with $F\subset\mathbb{Z}_M$ such that $|F|$ is a constant that does not depend on the ambient dimension~$M$, and
\begin{equation*}
E = \left\lbrace\left((k_1,\ell_1),(k_2,\ell_2)\right) \text{, s.t. } k_1,k_2\in F, ~\ell_2 - \ell_1\in C\right\rbrace\subset \Lambda\times\Lambda.
\end{equation*}
Here, set $C$ is a random subset of $\mathbb{Z}_M$, such that
\begin{equation}\label{num_set_C}
C =  D\cup (-D)\setminus \{0\}\subset \mathbb{Z}_M \text{ with } {\bf 1}_D(m)\sim \text{i.i.d. } B\left(1,\tfrac{d\log M}{M}\right),
\end{equation}
\noindent that is, $D\subset\mathbb{Z}_M$ is constructed at random, so that every $m\in \mathbb{Z}_M$ is chosen to be an element of $D$ independently with probability $\tfrac{d\log M}{M}$, for some parameter~$d>0$.

%\begin{figure}[t]\center
%\begin{overpic}
%[width=90mm,tics=20
%%,grid
%]{smallest_singular_value_2.png}
%    \put(25,72){\footnotesize Smallest singular value of $\Phi^*$}
%    \put(45,0){\scriptsize dimension $M$}
%    \put(3,30){\rotatebox{90}{\scriptsize parameter $\Delta$}}
%  \end{overpic}
%%\includegraphics[width=90mm]{smallest_singular_value.png}
%\caption{\label{num_singular} The figure shows the dependence of the numerically estimated parameter ${\Delta=\min\left\lbrace\sigma_{\min}^2(\Phi_{\Lambda'}^*):~ \Lambda'\subset \Lambda,~ |\Lambda'|\ge \frac{2}{3}|\Lambda|\right\rbrace}$ on the ambient dimension $M$. The Gabor window $g$ here is random, uniformly distributed on the unit sphere $\mathbb{S}^{M-1}$, and ${\Lambda = F\times\mathbb{Z}_M}$, where $|F|$ is a constant that does not depend on $M$. The number of the numerical experiments considered here is 1000, and the plot shows the smallest obtained result for each dimension. These numerical results suggest that $\Delta$ is bounded away from zero by a numerical constant not depending on the dimension~$M$, that is, the Gabor frame $(g, F\times \mathbb{Z}_M)$ is robust to erasures.}
%\end{figure}

The measurement frame $\Phi_\Lambda$ and the set of vectors for additional measurements $\Phi_E$ are then given by
\begin{equation*}
\begin{split}
& \Phi_\Lambda = (g,\Lambda),~ g \in \mathbb{C}^M \text{ uniformly distributed on } \mathbb{S}^{M-1}\subset \mathbb{C}^M;\\
& \Phi_E = \left\lbrace\pi(\lambda_1)g + \omega^t\pi(\lambda_2)g \right\rbrace_{(\lambda_1,\lambda_2)\in E,~ t\in \{0,1,2\}} \text{ with }\omega = e^{2\pi i/3}.
\end{split}
\end{equation*}
For our numerical simulations, we consider noisy phaseless measurements
\begin{equation*}
\begin{split}
b_\lambda = |\langle x, \pi(\lambda)g \rangle|^2 + \nu_\lambda, & \quad \lambda\in \Lambda;\\
b_{\lambda_1\lambda_2t} = |\langle x, \pi(\lambda_1)g + \omega^t \pi(\lambda_2)g \rangle|^2 + \nu_{\lambda_1\lambda_2t}, & \quad (\lambda_1,\lambda_2)\in E,~ t\in \{0,1,2\},
\end{split}
\end{equation*}
\noindent where $\nu_\lambda ,~ \nu_{\lambda_1\lambda_2t} \sim \text{i.i.d.} ~\mathcal{N}(0, \sigma)$ are independent normally distributed additive noise components. Theorem \ref{stability_main} then gives the following bound on the reconstruction error of Algorithm \ref{noisy_reconstruction}
\begin{equation}\label{theoretical_bound}
||\tilde{x} - e^{i\theta}x||_2^2 \le C\sqrt{M}||\nu||_2,
\end{equation}
\noindent where the constant $C$ depends on the spectral gap of the graph of measurements $G$ and on the parameter $$\Delta = \min_{\substack{\Lambda''\subset \Lambda,\\ |\Lambda''|\ge 2/3|\Lambda|}}\sigma_{\min}^2(\Phi_{\Lambda''}^*).$$

%Numerical results illustrating the dependence of the value $\Delta$ on the dimension $M$ are presented on Figure \ref{num_singular}. They suggest that $\Delta$ is bounded away from zero by a numerical constant not depending on $M$. More precisely, we formulate the following conjecture.

Singular values of the analysis matrices of Gabor frames with random windows are studied in \cite{salanevich_sing_val_gabor}. In particular, it has been shown that the smallest singular value of the analysis matrix $\sigma_{\min}^2(\Phi_{\Lambda''}^*)$ of a randomly selected subframe $(g, \Lambda'')$ of a Gabor frame $(g, \Lambda'')$ is bounded from below by $c\frac{|\Lambda''|}{M}$, for a suitably chosen numerical constant $c>0$ , with high probability. Unfortunately, no uniform bounds, similar to the parameter $\Delta$ defined above, are known to the date for Gabor frames. We formulate the following conjecture.

\begin{conjecture}\label{conj_sing_val_gabor_unif}
Consider a Gabor frame $(g, \Lambda)$ with $g$ uniformly distributed on~$\mathbb{S}^{M-1}$ and $\Lambda\subset \mathbb{Z}_M\times\mathbb{Z}_M$, such that $|\Lambda| = O(M\log^\alpha M)$ (where the parameter $\alpha\geq 0$ has to be specified). Then, for any $\varepsilon\in (0,1)$, $$\Delta(p)=\min_{\substack{\Lambda''\subset \Lambda,\\ |\Lambda''|\ge p|\Lambda|}}\sigma_{\min}^2(\Phi_{\Lambda''}^*)\geq c\frac{|\Lambda|}{M}$$ with probability at least $1-\varepsilon$, where $c>0$ is some numerical constant that depends only on $p$ and $\varepsilon$.
\end{conjecture}

\begin{figure}[t]\center
\begin{tabular}{cc}
\begin{overpic}
[width=67mm,tics=20
%,grid
]{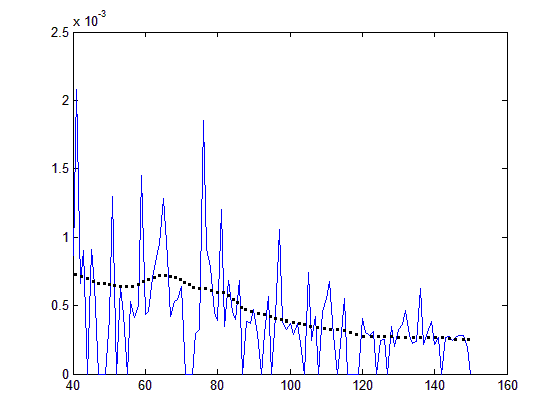}
    \put(10,75){\footnotesize Reconstruction error of Algorithm \ref{noisy_reconstruction} }
    \put(40,0){\scriptsize dimension $M$}
    \put(3,20){\rotatebox{90}{\scriptsize error $||\tilde{x} - e^{i\theta}x||_2^2$}}
  \end{overpic}
&
\begin{overpic}
[width=67mm,tics=20
%,grid
]{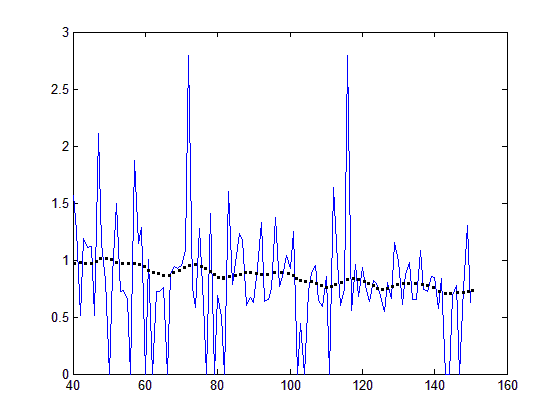}
    \put(10,75){\footnotesize Reconstruction error of Algorithm \ref{noisy_reconstruction} }
    \put(40,0){\scriptsize dimension $M$}
   \put(0,10){\rotatebox{90}{\scriptsize error to noise ratio $\frac{||\tilde{x} - e^{i\theta}x||_2^2}{||\nu||_2}$}}
  \end{overpic}
\end{tabular}
\caption{\label{num_dim}Dependence of the reconstruction error of Algorithm \ref{noisy_reconstruction} (left) and the error to noise ratio (right) on the ambient dimension $M$. Here the noise vector is random, such that it has independent normally distributed entries with variance $\sigma = 10^{-3}$. The black dashed lines on the plots show the average (over several simulations with different noise realizations) values of the reconstruction error and the error to noise ratio, respectively. These numerical results suggest that the error to noise ratio does not depend on the signal dimension and is bounded above by a numerical constant.}
\end{figure}

To illustrate Theorem \ref{stability_main}, we consider two sets of simulations. For the first one, we let the dimension of the signal vary and explore the reconstruction error of the algorithm for a random normally distributed noise vector with independent entries and fixed variance. On Figure \ref{num_dim} we show the obtained results, which suggest that the error to noise ratio does not depend on the signal dimension, unlike the bound \eqref{theoretical_bound} obtained in Theorem \ref{stability_main}. In fact, the ratio between the reconstruction error and the norm of the noise vector appears to be bounded above by a numerical constant close to $3$.

\begin{figure}[t]\center
\begin{tabular}{cc}
\begin{overpic}
[width=67mm,tics=20
%,grid
]{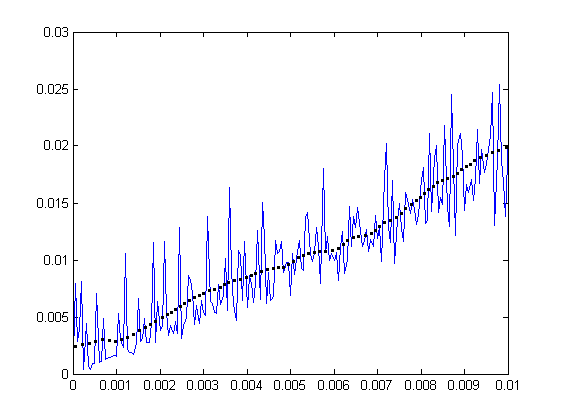}
    \put(10,75){\footnotesize Reconstruction error of Algorithm \ref{noisy_reconstruction} }
    \put(35,0){\scriptsize noise variance $\sigma$}
    \put(0,20){\rotatebox{90}{\scriptsize error $||\tilde{x} - e^{i\theta}x||_2^2$}}
  \end{overpic}
&
\begin{overpic}
[width=67mm,tics=20
%,grid
]{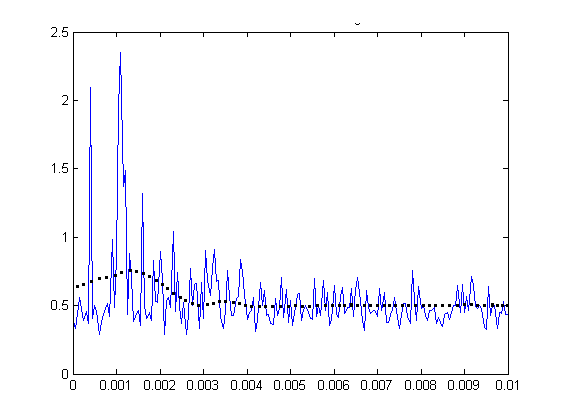}
    \put(10,75){\footnotesize Reconstruction error of Algorithm \ref{noisy_reconstruction} }
    \put(35,0){\scriptsize noise variance $\sigma$}
   \put(0,10){\rotatebox{90}{\scriptsize error to noise ratio $\frac{||\tilde{x} - e^{i\theta}x||_2^2}{||\nu||_2}$}}
  \end{overpic}
\end{tabular}
\caption{\label{num_var}Dependence of the reconstruction error of Algorithm \ref{noisy_reconstruction} (left) and the error to noise ratio (right) on the variance $\sigma$ of the entries $\nu_\lambda, \nu_{\lambda_1 \lambda_2 t}$ of the noise vector $\nu$, which are selected independently from the normal distribution $\mathcal{N}(0,\sigma)$. The ambient dimension here is $M=100$. The black dashed lines on the plots show the average (over several simulations with different noise realizations) values of the reconstruction error and the error to noise ratio, respectively. These numerical results suggest that the reconstruction error grows linearly with the magnitude of noise.}
\end{figure}

For the second set of simulation, we explore the dependence of the reconstruction error and the error to noise ratio on the noise variance for a fixed signal dimension. The obtained results, shown on Figure \ref{num_var}, illustrate that the reconstruction error grows linearly with the magnitude of noise. 

On both Figures \ref{num_dim} and \ref{num_var}, we show the average values of the reconstruction error and the error to noise ratio (over several simulations with different noise realizations) using black dashed lines. In other words, the black dashed lines on the plots show (an approximation of) the expected values of the corresponding quantities.  We note that, on both figures, the average error to noise ratio appears to be smaller than $1$, which means that noise reduction takes place during signal reconstruction. This can be explained in the following way. Assuming that the graph of measurements $G$ is sufficiently well connected, that is, $\Spg(G)$ is sufficiently big, the phase of a frame coefficient can be propagated to the corresponding vertex using various different paths. In Algorithm \ref{noisy_reconstruction}, we use the angular synchronization algorithm, which utilizes relative phase information coming to a vertex $\lambda\in \Lambda$ from all edges $(\lambda, \lambda')\in E$ incident to $\lambda$. Since in the simulations we considered noise with independent entries and zero mean, it tends to cancel itself at a vertex.

The reason why the plots on Figures \ref{num_dim} and \ref{num_var} look quite spiky is that different realizations of the random graph of measurements $G$ are used for the simulations. As we mentioned before, the reconstruction error bound~\eqref{theoretical_bound} of Algorithm \ref{noisy_reconstruction} depends on the spectral gap of $G$, which might differ from one realization to another. In particular, the bigger the cardinality of the random set $C$ is, the better are the connectivity properties of $G$, and the smaller is the reconstruction error.

\begin{figure}[t]\center
\begin{tabular}{cc}
\begin{overpic}
[width=67mm,tics=20
%,grid
]{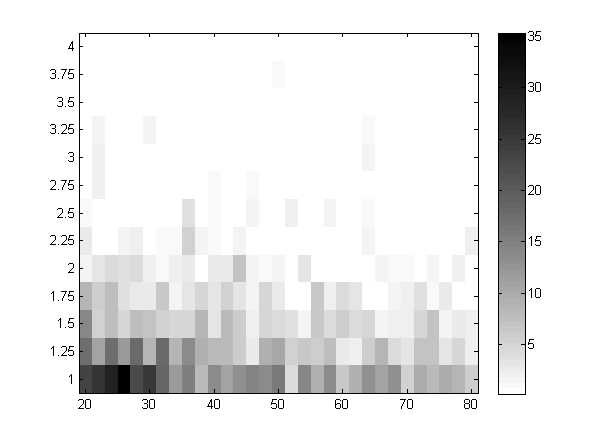}
    \put(7,75){\footnotesize Reconstruction error of Algorithm \ref{noisy_reconstruction} }
    \put(37,0){\scriptsize dimension $M$}
    \put(2,25){\rotatebox{90}{\scriptsize parameter $d$}}
  \end{overpic}
&
\begin{overpic}
[width=67mm,tics=20
%,grid
]{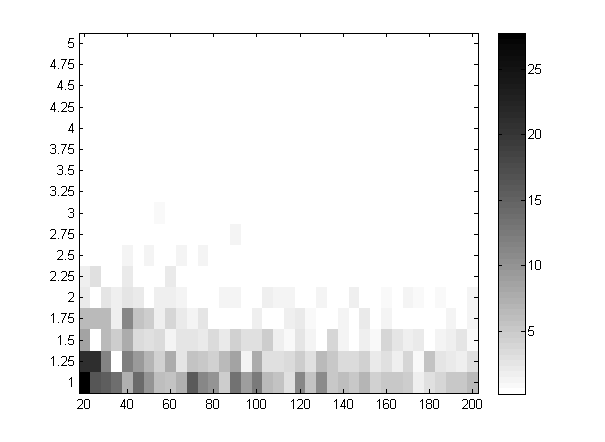}
    \put(7,75){\footnotesize Reconstruction error of Algorithm \ref{noisy_reconstruction} }
    \put(37,0){\scriptsize dimension $M$}
     \put(2,25){\rotatebox{90}{\scriptsize parameter $d$}}
\end{overpic}
\end{tabular}
\caption{\label{num_constant} Dependence of the reconstruction error to noise ratio of Algorithm~\ref{noisy_reconstruction} on the papameter $d$ in \eqref{num_set_C} for various dimensions. Here, values of high error to noise ratio are color coded darker then those with a low ratio. Parameter~$d$ controls the connectivity properties of the graph of measurements $G$. These numerical results suggest that, starting at approximately $d = 3$, the error to noise ratio does not exceed $4$, and also does not depend on the ambient dimension $M$. This observation allows to reduce the multiplicative constant in the number of measurements required for the reconstruction.}
\end{figure}

The cardinality of the set $C$ and, thus, the reconstruction error of the algorithm depend on the parameter $d$, as formula \eqref{num_set_C} shows.  More precisely, it follows from Lemma \ref{bernoulli} that, provided $d > 36$,
\begin{equation*}
\Spg (G) \ge 1 - \frac{6}{\sqrt{d}}
\end{equation*}
\noindent with overwhelming probability. We now investigate the dependence of the reconstruction error of the algorithm on the parameter $d$ numerically.

%\begin{figure}[ht]\center
%\begin{tabular}{cc}
%\includegraphics[width=85mm]{error-dim_u.png}
%&
%\includegraphics[width=85mm]{error-noise-dim_u.png}
%\end{tabular}
%\caption{\label{num_dim}Dependence of the error (left) and the error to noise ration (right) on the dimension $M$ (noise variance $\sigma = 10^{-3}$).}
%\end{figure}
%
%\begin{figure}[ht]\center
%\begin{tabular}{cc}
%\includegraphics[width=85mm]{error-var_u.png}
%&
%\includegraphics[width=85mm]{error_noise-var_u.png}
%\end{tabular}
%\caption{\label{num_var}Dependence of the error (left) and the error to noise ration (right) on the noise variance (for $M=100$).}
%\end{figure}

Numerical results presented on Figure \ref{num_constant} show the dependence of the ratio between the reconstruction error of Algorithm \ref{noisy_reconstruction} and the norm of the noise vector on the parameter $d$ (vertical axis) for the varying ambient dimension (horizontal axis). One can see that starting approximately at $d = 3$, that is, much earlier than the value $d = 144$, predicted by Theorem \ref{stability_main}, this ratio does not exceed $4$.

\section{Conclusions}

For most of the existing phase retrieval algorithms, including\linebreak PhaseLift~\cite{candes1, candes2} and Wirtinger flow algorithm \cite{candes2015phase}, recovery and robustness guarantees are proven for the case of a random measurement frame with independent frame vectors. One of the main reasons for this is that properties of such frames are sufficiently well studied. Moreover, such frame appear to have optimal properties in the sense of being ``well-spread'', which is formalized in different ways for different reconstruction algorithms.

In this paper, we use the idea of polarization \cite{mixon1} to design the first phase retrieval algorithm for time-frequency structured frames $\Phi$ with $|\Phi|< M^2$. An investigation of properties of Gabor frames, that is, frames consisting of vectors which are time and frequency shifts of the random window, and, thus are not independent, allow us to conclude recovery and robustness guarantees for the postulated phase retrieval algorithm. In particular, Theorem \ref{number_of_small_meas}, which describes the order statistics of frame coefficients for a Gabor frame with a random window, allows us to obtain robustness guarantees in Theorem~\ref{stability_main}.

%We note that, due to the presence of a $\sqrt{M}$ factor, the obtained in Theorem~\ref{stability_main} bound~\eqref{theoretical_bound} of the reconstruction error is not completely satisfactory from the perspective of applications. 
The numerical results presented in Section~\ref{numerical_robustness} suggest that the theoretical bound~\eqref{theoretical_bound} on the reconstruction error of the proposed phase retrieval algorithm can be further improved by a factor of $\sqrt{M}$. Thus, one of the important tasks for future research is to understand the gap between theoretically predicted robustness guarantees and results obtained numerically. We hope that a further study of properties of Gabor frames with random windows will allow us to not only remove the factor of $\sqrt{M}$ from the reconstruction error bound \eqref{theoretical_bound}, but also to prove the following conjecture, which states the uniform robustness guarantees for Algorithm~\ref{noisy_reconstruction}.

\begin{conjecture}\label{conjecture_unif_robustness}
Consider the measurement procedure \eqref{measurements} with $|F|$ and $d$ sufficiently large. If the noise vector satisfies $\frac{||\nu||_2}{||x||_2^2}\le \frac{C_1}{M}$ for some $C_1$ small enough, then there exists a numerical constant $C>0$ such that with overwhelming probability, for every $x\in \mathbb{C}^M$, the estimate $\tilde{x}$ produced by Algorithm~\ref{noisy_reconstruction} satisfies
\begin{equation*}
\min_{\theta\in [0,2\pi)}||\tilde{x} - e^{i\theta}x||_2^2 \le C ||\nu||_2.
\end{equation*}
\end{conjecture} 

We note that one of the main ingredients of the proof of Theorem~\ref{stability_main} is Theorem~\ref{number_of_small_meas}, which gives bounds on the frame order statistics of a Gabor frame with a random window (see Section~\ref{projective_uniformity}). Similarly, the main missing ingredient of the proof of Conjecture~\ref{conjecture_unif_robustness} is a uniform version of Theorem~\ref{number_of_small_meas}.

%\begin{figure}[ht]\center
%\includegraphics[width=90mm]{smallest_singular_value.png}
%\caption{\label{num_singular}Dependence of the parameter $\Delta$ on the dimension.}
%\end{figure}
%
%\begin{figure}[ht]\center
%\begin{tabular}{cc}
%\includegraphics[width=85mm]{error-coef_med.png}
%&
%\includegraphics[width=85mm]{error-coef.png}
%\end{tabular}
%\caption{\label{num_constant}Dependence of the relative reconstruction error on the constant $d$ for various dimensions.}
%\end{figure}

\section*{Acknowledgments}

The authors thank the anonymous referees for providing thoughtful suggestions that led to a more complete discussion of the context of our results. P. Salanevich thanks Prof. Dr. Felix Krahmer for insightful discussions during the ``Mathematics of Signal Processing'' trimester program (Hausdorff Research Institute for Mathematics, Bonn, Germany), and Prof. Dr. Terence Tao for his valuable suggestions during her visit at UCLA in Spring 2015. P. Salanevich also thanks Prof. Dr. Holger Boche for helpful discussions during her visit to Technical University of Munich in December 2014. P. Salanevich is supported by Research Grant for Doctoral Candidates and Young Academics and Scientists of German Academic Exchange Service (DAAD), the funding is greatly appreciated.

\appendix
\section{Proof of Theorem \ref{number_of_small_meas}}
\noindent Here we prove Theorem \ref{number_of_small_meas} which is formulated as follows.
\begin{theorem*}
Fix $x\in \mathbb{S}^{M-1}\subset \mathbb{C}^M$ and consider a Gabor frame $(g, \Lambda)$ with $\Lambda\subset\mathbb{Z}_M\times \mathbb{Z}_M$ and a random window $g$ uniformly distributed on the unit sphere $\mathbb{S}^{M-1}$. Then the following holds.
\begin{enumerate}
\item[(a)] For any $c>0$ and $k>0$, with probability at least $1-\frac{1}{k^2}$, we have
\begin{equation*}
\left|\left\lbrace\lambda\in \Lambda \text{, s.t. } |\langle x, \pi(\lambda)g \rangle|< \frac{c}{\sqrt{M}}\right\rbrace\right|<|\Lambda|(c^2 + kc).
\end{equation*}
\item[(b)] For any $K>0$ and $k>0$, with probability at least $1-\frac{1}{k^2}$, we have
\begin{equation*}
\left|\left\lbrace\lambda\in \Lambda \text{, s.t. } |\langle x, \pi(\lambda)g \rangle|> \frac{K}{\sqrt{M}}\right\rbrace\right|<|\Lambda|\left(\frac{8}{\pi}e^{-K^2} + k\frac{2\sqrt{2}}{\sqrt{\pi}}e^{-\frac{K^2}{2}}\right).
\end{equation*}
\end{enumerate}
\end{theorem*}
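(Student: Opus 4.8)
The plan is to reduce both statements to a one-dimensional marginal computation followed by a second-moment (Chebyshev) argument, exploiting that all frame coefficients are identically distributed. Since $\pi(\lambda)$ is unitary and $\|x\|_2 = 1$, I write $\langle x, \pi(\lambda)g\rangle = \langle \pi(\lambda)^*x, g\rangle$ with $y_\lambda := \pi(\lambda)^*x$ a unit vector; by the rotation invariance of the uniform measure on $\mathbb{S}^{M-1}$, $|\langle y_\lambda, g\rangle|$ has the same distribution as $|g_1|$, the modulus of a single coordinate of a uniformly random unit vector. Hence $|\langle y_\lambda, g\rangle|^2 \sim \mathrm{Beta}(1, M-1)$, with survival function $\mathbb{P}(|\langle y_\lambda, g\rangle|^2 > t) = (1-t)^{M-1}$. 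The first step is therefore to record the two single-coefficient tail estimates: for the lower tail, $\mathbb{P}(|\langle x, \pi(\lambda)g\rangle| < c/\sqrt{M}) = 1 - (1 - c^2/M)^{M-1} \le c^2$ by Bernoulli's inequality, and for the upper tail, $\mathbb{P}(|\langle x, \pi(\lambda)g\rangle| > K/\sqrt{M}) = (1 - K^2/M)^{M-1} \le \tfrac{8}{\pi} e^{-K^2}$.

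Next I would control the counts by a second moment argument. Writing $Z_\lambda$ for the indicator of the relevant event and $S = \sum_{\lambda\in\Lambda} Z_\lambda$ for the quantity to be bounded, set $p$ equal to the single-coefficient bound from the previous step (so $p = c^2$ in part (a) and $p = \tfrac{8}{\pi} e^{-K^2}$ in part (b)). Then $\mathbb{E}[S] = \sum_\lambda \mathbb{E}[Z_\lambda] \le |\Lambda| p$. Crucially, I would \emph{not} analyse the joint law of the coefficients; instead I use only the crude pairwise bound $\mathbb{E}[Z_\lambda Z_{\lambda'}] = \mathbb{P}(\text{both events}) \le \min(\mathbb{E} Z_\lambda, \mathbb{E} Z_{\lambda'}) \le p$, which already gives $\mathbb{E}[S^2] \le |\Lambda|^2 p$ and hence $\mathrm{Var}(S) \le \mathbb{E}[S^2] \le |\Lambda|^2 p$.

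Finally, since $\mathbb{E}[S] \le |\Lambda| p$, on the event $S \ge |\Lambda|(p + k\sqrt{p})$ we have $S - \mathbb{E}[S] \ge |\Lambda| k\sqrt{p}$, so Chebyshev's inequality yields $\mathbb{P}(S \ge |\Lambda|(p + k\sqrt{p})) \le \mathrm{Var}(S)/(|\Lambda| k\sqrt{p})^2 \le 1/k^2$. Substituting $p = c^2$ (so $\sqrt{p} = c$) produces the bound $|\Lambda|(c^2 + kc)$ of part (a), while $p = \tfrac{8}{\pi} e^{-K^2}$ (so $\sqrt{p} = \tfrac{2\sqrt{2}}{\sqrt{\pi}} e^{-K^2/2}$) produces the bound of part (b), each with probability at least $1 - 1/k^2$.

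I expect the main obstacle to be the upper-tail constant in part (b): whereas the lower bound $\le c^2$ is immediate from Bernoulli's inequality, pinning $(1 - K^2/M)^{M-1}$ to the clean form $\tfrac{8}{\pi} e^{-K^2}$ uniformly in both $M$ and $K$ (including the regime where $K^2/M$ is not small) takes care, since the naive estimate $(1-K^2/M)^{M-1} \le e^{-(M-1)K^2/M}$ degrades the exponent for small $M$. An alternative is to pass to the Gaussian representation $g = h/\|h\|_2$ with $h$ having i.i.d. complex Gaussian entries and to bound the tail of $|\langle y_\lambda, h\rangle|^2$ against the concentration of $\|h\|_2^2$. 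The constants $\tfrac{8}{\pi}$ and $\tfrac{2\sqrt{2}}{\sqrt{\pi}} = \sqrt{8/\pi}$ reflect this estimate; note that they are tied together as $p$ and $\sqrt{p}$ by the second-moment structure, so only the single bound $\mathbb{E}[Z_\lambda] \le \tfrac{8}{\pi} e^{-K^2}$ actually needs to be established.
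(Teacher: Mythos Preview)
Your proposal matches the paper's proof in architecture: identical distribution of all $|\langle x,\pi(\lambda)g\rangle|$, the crude second-moment bound $\mathbb{E}[Z_\lambda Z_{\lambda'}]\le p$ (no attempt to exploit pairwise independence), and Chebyshev applied to $S$. The paper phrases the first reduction as ``$\pi(\lambda)g$ is uniform on $\mathbb{S}^{M-1}$'' (established via the Gaussian representation $g=h/\|h\|_2$) rather than moving $\pi(\lambda)^*$ onto $x$, but the two are equivalent. Your explicit identification $|g_1|^2\sim\mathrm{Beta}(1,M-1)$ is a genuine simplification for part~(a): the paper obtains $\mathbb{P}\{|g(0)|<\delta\}\le (M-1)\delta^2$ by integrating over the real $(2M-1)$-sphere, whereas Bernoulli's inequality applied to $1-(1-c^2/M)^{M-1}$ gives the same bound in one line. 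For part~(b) you have correctly located the only nontrivial step; the paper does \emph{not} bound $(1-K^2/M)^{M-1}$ directly but instead computes the surface area of the cap $\{z\in\mathbb{S}_{\mathbb{R}}^{2M-1}: z_0^2+z_1^2>K^2/M\}$ via a somewhat lengthy explicit double integral, from which the constant $8/\pi$ emerges. Since your Beta formula already gives the \emph{exact} tail probability, your route collapses all of part~(b) to the elementary inequality $(1-K^2/M)^{M-1}\le\tfrac{8}{\pi}e^{-K^2}$ for $M\ge 2$ and $0<K^2<M$; this is the same inequality the paper's integral is implicitly proving, and can be checked directly without passing back to the Gaussian model.
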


\begin{proof}
(a) For $x\in \mathbb{S}^{M-1}$ fixed, we set $$G_\delta(x) = \{\varphi\in \mathbb{S}^{M-1}\subset \mathbb{C}^M, \text{ s.t. } |\langle x, \varphi \rangle|<\delta\}.$$ We are interested in the distribution of the random variable
\begin{equation*}
Z_x = |G_\delta(x)\cap (g, \Lambda)| = \sum_{\lambda\in \Lambda}{\bf 1}_{\{\pi(\lambda)g\in G_\delta (x)\}},
\end{equation*}
\noindent where ${\bf 1}_{\{\varphi\in G_\delta (x)\}}$ is the characteristic function of the event ${\{\varphi\in G_\delta (x)\}}$. In words, $Z_x$ is the number of small measurements of the fixed signal $x$ with respect to the Gabor frame $(g,\Lambda)$ with a random window $g$.

First, note that for each $\lambda\in \Lambda$, $\pi(\lambda)g$ is also uniformly distributed on $\mathbb{S}^{M-1}$, that is, the random vectors $\pi (\lambda)g$, $\lambda\in \Lambda$, are equally distributed. Indeed, consider a random vector $h$, such that $h(m)\sim i.i.d. ~ \mathbb{C}\mathcal{N}(0,\frac{1}{M})$. Then it is well known that $h/||h||_2$ is uniformly distributed on $\mathbb{S}^{M-1}$, since random vector $h/||h||_2$ almost surely has unit norm and its distribution is rotation invariant \cite{marsaglia1972choosing}. In other words, we can write $g = h/||h||_2$. As such, we have $$\pi(\lambda)g = \pi (\lambda)h/||h||_2 = \pi (\lambda)h/||\pi (\lambda)h||_2,$$ since both modulation and time shift are unitary operators. The coordinates of $h$ are independent identically distributed random variables, thus translation $T_k$, which is just a permutation of the vector coordinates, preserves the distribution of $h$. For the modulation we have $M_\ell h(m) = e^{2\pi i m\ell/M}h(m)$ is also normally distributed, with \begin{equation*}
\begin{split}
\mathbb{E}(e^{2\pi i m\ell/M}h(m))  & = e^{2\pi i m\ell/M}\mathbb{E}( h(m)) = 0 ;\\
\Var (e^{2\pi i m\ell/M}h(m)) & = \mathbb{E}(e^{2\pi i m\ell/M}h(m)e^{-2\pi i m\ell/M}\bar{h}(m)) = \Var(h(m)) = \frac{1}{M}.
\end{split}
\end{equation*}
\noindent Thus the distribution of $h$ is preserved by both modulation and time shift, and $\pi(\lambda)g  = \pi (\lambda)h/||\pi (\lambda)h||$ has the same distribution as $g = h/||h||_2$.\par
Since $\pi(\lambda)g$ has the same distribution as $g$, we have $$\mathbb{P}\{|\langle x, \pi(\lambda)g \rangle|<\delta\} = \mathbb{P}\{|\langle x, g \rangle|<\delta\},$$ for all $\lambda\in \Lambda$. Thus
%\begin{equation}\label{expect}
%\mathbb{E}(Z_x) = \mathbb{E}\left(\sum_{\lambda\in \Lambda}{\bf 1}_{\{\pi(\lambda)g\in G_\delta (x)\}}\right) = \sum_{\lambda\in \Lambda}\mathbb{P}\{\pi(\lambda)g\in G_\delta (x)\} = |\Lambda|\mathbb{P}\{|\langle x,g \rangle|<\delta\}.
%\end{equation}
\begin{equation}\label{expect}
\mathbb{E}(Z_x) = \sum_{\lambda\in \Lambda}\mathbb{P}\{\pi(\lambda)g\in G_\delta (x)\} = |\Lambda|\mathbb{P}\{|\langle x,g \rangle|<\delta\}.
\end{equation}
Let $R$ be an orthogonal matrix, such that $x=Re_1$, where $e_1 = (1,0,0,\dots,0)^T$ is the first vector of the standard basis. Then $$|\langle x,g \rangle| = |\langle Re_1,g \rangle| = |\langle e_1,R^*g \rangle|.$$ By the rotational symmetry of the distribution of $g$, we obtain 
\begin{equation*}
\mathbb{P}\{|\langle x,g \rangle|<\delta\} = \mathbb{P}\{|\langle e_1,g \rangle|<\delta\} = \mathbb{P}\{|g(0)|<\delta\}.
\end{equation*}

Let us identify the complex unit sphere $\mathbb{S}^{M-1}\subset \mathbb{C}^M$ with the real unit sphere $\mathbb{S}_{\mathbb{R}}^{2M-1}\subset \mathbb{R}^{2M}$, using the map $\mathcal{I}:\mathbb{S}^{M-1}\to \mathbb{S}_{\mathbb{R}}^{2M-1}$ given by
\begin{equation}\label{complex_to_real}
\mathcal{I}(z_0,\dots,z_{M-1}) = (\Re(z_0),\Im(z_0),\dots,\Re(z_{M-1}),\Im(z_{M-1})).
\end{equation}
Since $g$ is uniformly distributed on $\mathbb{S}^{M-1}$, $\tilde{g} = \mathcal{I}(g)$ is uniformly distributed on $\mathbb{S}_{\mathbb{R}}^{2M-1}$. Thus
\begin{equation*}
\mathbb{P}\{|g(0)|<\delta\}  = \mathbb{P}\{\tilde{g}(0)^2 + \tilde{g}(1)^2<\delta^2\} = \frac{S_{<\delta}}{S_1},
\end{equation*}
\noindent where $S_1 = \frac{2\pi^M}{(M-1)!}$ is the surface area of $\mathbb{S}_{\mathbb{R}}^{2M-1}$, and $S_{<\delta}$ is the surface area of the set $\{z\in \mathbb{S}_{\mathbb{R}}^{2M-1}, \text{ s.t. } z_0^2 + z_1^2 <~\delta^2\}$.
\begin{equation*}
S_{<\delta} = \int_{-\delta}^{\delta} \int_{-\sqrt{\delta^2 - z_0^2}}^{\sqrt{\delta^2 - z_0^2}} \frac{2\pi^{M-1}}{(M-2)!}(1-z_0^2 - z_1^2)^{\frac{2M-1}{2}} dz_1 dz_0 < \frac{2\pi^M\delta^2}{(M-2)!},
\end{equation*}
\noindent that is, $\mathbb{P}\{|g(0)|<\delta\}\le \delta^2(M-1)$.
\longhide{
\noindent Since $g = h/||h||_2$ with $h(m)\sim i.i.d. ~ \mathbb{C}\mathcal{N}(0,\frac{1}{M})$, we have $g(0) = h(0)/||h||_2 = (a+ib)/||h||_2$, \linebreak where $a,b\sim i.i.d ~ \mathcal{N}(0,\frac{1}{2M})$. Then 
\begin{equation*}
\begin{split}
 \mathbb{P}\{|g(0)|<\delta\} &= \mathbb{P}\left\lbrace \frac{a^2 + b^2}{||h||_2^2}<\delta^2\right\rbrace \le\mathbb{P}\{a^2 + b^2< 4 \delta^2 \text{ or } ||h||_2^2 > 4\}\le\\
& \le \mathbb{P}\{a^2 + b^2< 4 \delta^2\} + \mathbb{P}\{||h||_2^2 > 4\}.
\end{split}
\end{equation*}
Since $a$ and $b$ are independent and have the same distribution, we have $\mathbb{P}\{a^2 + b^2< 4 \delta^2\}\le \mathbb{P}\{\min\{a^2, b^2\} < 2 \delta^2\}\le 2\mathbb{P}\{|a|<\sqrt{2}\delta\} = 2 \int_{-\sqrt{2}\delta}^{\sqrt{2}\delta} \frac{\sqrt{M}}{\sqrt{\pi}}e^{-x^2 M}dx\le 4\sqrt{2}\delta\frac{\sqrt{M}}{\sqrt{\pi}} = \delta\frac{4\sqrt{2M}}{\sqrt{\pi}}$.\par 

To bound the other term, we are going to use the following lemma from \cite{laurent2000adaptive}.

\begin{lemma}\label{chi_square}
Let $Y_1,\dots,Y_M$ be i.i.d. $\mathcal{N}(0,1)$ and let $c = (c_1,\dots,c_M)$ with $c_k$ non-negative, and
\begin{equation*}
Z = \sum_{k = 1}^M c_k(Y_k^2 - 1).
\end{equation*}
\noindent Then the following inequalities hold for any $t>0$
\begin{equation}\label{lower}
\mathbb{P}\{Z \ge 2||c||_2 \sqrt{t} + 2||c||_\infty t\} \le e^{-t};
\end{equation}
\begin{equation}\label{upper}
\mathbb{P}\{Z \le -2||c||_2 \sqrt{t}\} \le e^{-t}.
\end{equation}
\end{lemma}
Note, that $2M ||h||_2^2 = 2M \sum_{k = 1}^M (|a_k|^2 + |b_k|^2)$, where $h_k = a_k + ib_k$ and $a_k, b_k\sim i.i.d. ~ \mathcal{N}(0,\frac{1}{2M})$. Then for $k \in \{1,\dots,M\}$, $2Ma_k, 2Mb_k$ are independent standard Gaussian random variables and we can apply inequality (\ref{lower}) from Lemma \ref{chi_square} with $c_k = 1$ to get for any $t>0$
\begin{equation*}
\mathbb{P}\{2M||h||_2^2 \ge \sqrt{8Mt} + 2t + 2M\}\le e^{-t}.
\end{equation*}
\noindent Taking $t= M/2$ we obtain
\begin{equation*}
\mathbb{P}\{||h||_2^2 > 4\} = \mathbb{P}\{2M||h||_2^2 > 8M\} \le \mathbb{P}\{2M||h||_2^2 \ge 5M\}\le e^{-M/2}.
\end{equation*}
Then, summarizing, we obtain
\begin{equation}\label{prob_bound}
\mathbb{P}\{|\langle x,g \rangle|<\delta\} = \mathbb{P}\{|g(0)|<\delta\}\le \mathbb{P}\{a^2 + b^2< 4 \delta^2\} + \mathbb{P}\{||h||_2^2 > 4\}\le \delta\frac{4\sqrt{M}}{\sqrt{\pi}} + e^{-M/2}.
\end{equation}
}
Now, setting $\delta = \frac{c}{\sqrt{M}}$ for $c$ sufficiently small, we obtain
\begin{equation}\label{epsilon_prob_bound}
\mathbb{P}\left\lbrace|\langle x,g \rangle|<\frac{c}{\sqrt{M}}\right\rbrace\le c^2.
\end{equation}
Using equations (\ref{expect}) and (\ref{epsilon_prob_bound}), we obtain
\begin{equation}\label{expectation}
\mu = \mathbb{E}(Z_x) = |\Lambda|\mathbb{P}\left\lbrace|\langle x,g \rangle|<\frac{c}{\sqrt{M}}\right\rbrace\le |\Lambda|c^2.
\end{equation}
Similarly, using (\ref{epsilon_prob_bound}), for the variance of $Z_x$ we obtain 
\begin{align*}
\sigma^2 & = \Var(Z_x) = \mathbb{E}(Z_x^2) - (\mathbb{E}(Z_x))^2 \le \mathbb{E}(Z_x^2) = \mathbb{E}\left(\left(\sum_{\lambda \in \Lambda}{\bf 1}_{\{\pi(\lambda)g\in G_{\frac{c}{\sqrt{M}}}(x)\}}\right)^2\right) \\
& = \mathbb{E}\left(\sum_{\lambda \in \Lambda}{\bf 1}_{\left\lbrace\pi(\lambda)g\in G_{\frac{c}{\sqrt{M}}}(x)\right\rbrace}^2 + \sum_{\substack{(\lambda_1,\lambda_2)\in \Lambda^2,\\\lambda_1\ne\lambda_2}}{\bf 1}_{\left\lbrace\pi(\lambda_1)g\in G_{\frac{c}{\sqrt{M}}}(x)\right\rbrace} {\bf 1}_{\left\lbrace\pi(\lambda_2)g\in G_{\frac{c}{\sqrt{M}}}(x)\right\rbrace}\right)\\
& = \sum_{\lambda\in \Lambda}\mathbb{P}\left\lbrace\pi(\lambda)g\in G_{\frac{c}{\sqrt{M}}} (x)\right\rbrace + \sum_{\substack{(\lambda_1,\lambda_2)\in \Lambda^2,\\\lambda_1\ne\lambda_2}}\mathbb{P}\left\lbrace\pi(\lambda_1)g, \pi(\lambda_2)g \in G_{\frac{c}{\sqrt{M}}} (x) \right\rbrace\\
& \le (|\Lambda + (|\Lambda|^2 - |\Lambda|))|\mathbb{P}\left\lbrace|\langle x,g \rangle|<\frac{c}{\sqrt{M}}\right\rbrace \le c^2 |\Lambda|^2,\numberthis \label{variance}
\end{align*}
\noindent that is, $\sigma\le c|\Lambda|$. Then, using Chebychev inequality and bounds (\ref{expectation}) and (\ref{variance}), we have
\begin{equation*}\label{chebychev}
\mathbb{P}\{Z_x \ge |\Lambda|(c^2 + kc)\}\le \mathbb{P}\{Z_x \ge \mu + k\sigma\}\le \mathbb{P}\{|Z_x - \mu|\ge k\sigma\}\le \frac{1}{k^2},
\end{equation*}
\noindent for any $k>0$. In other words, if we delete $|\Lambda|(c^2 + kc)$ smallest phaseless measurements, for the remaining  measurements with probability at least \linebreak$1-\frac{1}{k^2}$ we would have $|\langle x, \pi (\lambda)g\rangle|\ge\frac{c}{\sqrt{M}}$. This concludes the proof of (a).\par

\vspace{0.5cm}
The proof of (b) follows the same steps. Let $K$ be a constant and consider the following random variable
\begin{equation*}
U_x = \sum_{\lambda\in \Lambda}{\bf 1}_{\{|\langle x, \pi(\lambda)g \rangle|>K/\sqrt{M}\}}.
\end{equation*}
Since, for each $\lambda\in \Lambda$, $\pi(\lambda)g$ has the same (uniform on $\mathbb{S}^{M-1}$) distribution as $g$, we have $\mathbb{P}\{|\langle x, \pi(\lambda)g \rangle|>K/\sqrt{M}\} = \mathbb{P}\{|\langle x, g \rangle|>K/\sqrt{M}\}$, for all $\lambda\in \Lambda$. As above, we have
\begin{equation*}
\mathbb{P}\{|\langle x,g \rangle|>K/\sqrt{M}\} = \mathbb{P}\{|\langle e_1,g \rangle|>K/\sqrt{M}\} = \mathbb{P}\{|g(0)|>K/\sqrt{M}\}.
\end{equation*}
Using the map $\mathcal{I}:\mathbb{S}^{M-1}\to\mathbb{S}_{\mathbb{R}}^{2M-1}$ defined in (\ref{complex_to_real}), for $\tilde{g} = \mathcal{I}(g)$ we obtain
\begin{equation*}
\mathbb{P}\left\lbrace |g(0)|>\frac{K}{\sqrt{M}}\right\rbrace  = \mathbb{P}\left\lbrace\tilde{g}(0)^2 + \tilde{g}(1)^2>\frac{K^2}{M}\right\rbrace = \frac{S_{>K/\sqrt{M}}}{S_1},
\end{equation*}
\noindent where $S_1 = \frac{2\pi^M}{(M-1)!}$ is the surface area of $\mathbb{S}_{\mathbb{R}}^{2M-1}$, and $S_{>K/\sqrt{M}}$ is the surface area of the set $\left\lbrace z\in \mathbb{S}_{\mathbb{R}}^{2M-1}, \text{ s.t. } z_0^2 + z_1^2 >~\frac{K^2}{M}\right\rbrace$.
\begin{equation*}
%\Scale[0.9]{
\begin{split}
S_{>K/\sqrt{M}} & = \int_{|z_0| \le1} \int_{\sqrt{\frac{K^2}{M} - z_0^2} < |z_1|<\sqrt{1 - z_0^2}} \frac{2\pi^{M-1}}{(M-2)!}(1-z_0^2 - z_1^2)^{\frac{2M-1}{2}} dz_1 dz_0 \\
& = 8\frac{2\pi^{M-1}}{(M-2)!} \int_0^{\frac{K}{\sqrt{2M}}}  \int_{\sqrt{\frac{K^2}{M} - z_0^2}}^{\sqrt{1 - z_0^2}} (1 - z_0^2 - z_1^2)^{\frac{2M-1}{2}} dz_1dz_0 \\
& +  8\frac{2\pi^{M-1}}{(M-2)!} \int_{\frac{K}{\sqrt{2M}}}^{\frac{1}{\sqrt{2}}}  \int_{z_0}^{\sqrt{1 - z_0^2}} (1 - z_0^2 - z_1^2)^{\frac{2M-1}{2}} dz_1dz_0 \\
& \le \frac{16\pi^{M-1}}{(M-2)!} \frac{\sqrt{2M}}{K}  \int_0^{\frac{K}{\sqrt{2M}}}  \int_{\sqrt{\frac{K^2}{M} - z_0^2}}^{\sqrt{1 - z_0^2}} z_1(1 - z_0^2 - z_1^2)^{\frac{2M-1}{2}} dz_1dz_0 \\
& + \frac{16\pi^{M-1}}{(M-2)!} \frac{\sqrt{2M}}{K} \int_{\frac{K}{\sqrt{2M}}}^{\frac{1}{\sqrt{2}}}  \int_{z_0}^{\sqrt{1 - z_0^2}}z_1 (1 - z_0^2 - z_1^2)^{\frac{2M-1}{2}} dz_1dz_0 \\
%& = \frac{8\pi^{M-1}}{(M-2)!} \frac{\sqrt{2M}}{K} \frac{2}{2M+1} \int_0^{\frac{K}{\sqrt{2M}}} \left((1 - z_0^2 - z_1^2)^{\frac{2M+1}{2}} \biggr\rvert_{\sqrt{1 - z_0^2}}^{\sqrt{\frac{K^2}{M} - z_0^2}}\right) dz_0 \\
%& + \frac{8\pi^{M-1}}{(M-2)!} \frac{\sqrt{2M}}{K} \frac{2}{2M+1} \int_{\frac{K}{\sqrt{2M}}}^{\frac{1}{\sqrt{2}}} \left((1 - z_0^2 - z_1^2)^{\frac{2M+1}{2}} \biggr\rvert_{\sqrt{1 - z_0^2}}^{z_0}\right)dz_0\\
%& = \frac{8\pi^{M-1}}{(M-2)!} \frac{\sqrt{2M}}{K} \frac{2}{2M+1} \frac{K}{\sqrt{2M}} \left(1 - \frac{K^2}{M}\right)^{\frac{2M+1}{2}} \\
%& + \frac{8\pi^{M-1}}{(M-2)!} \frac{\sqrt{2M}}{K} \frac{2}{2M+1} \int_{\frac{K}{\sqrt{2M}}}^{\frac{1}{\sqrt{2}}} \left(1 - 2z_0^2\right)^{\frac{2M+1}{2}}dz_0 \\
& \le \frac{8\pi^{M-1}}{(M-1)!}\left( \left(1 - \frac{K^2}{M}\right)^{\frac{2M+1}{2}} +  \frac{M}{2K^2}\int_{\frac{K}{\sqrt{2M}}}^{\frac{1}{\sqrt{2}}} 4z_0\left(1 - 2z_0^2\right)^{\frac{2M+1}{2}}dz_0\right) \\
& \le \frac{8\pi^{M-1}}{(M-1)!}\left( \left(1 - \frac{K^2}{M}\right)^{\frac{2M+1}{2}} +  \frac{1}{2K^2}\left(1 - \frac{K^2}{M}\right)^{\frac{2M+3}{2}}\right)  \\
& \le \frac{8\pi^{M-1}}{(M-1)!}\left( e^{-\frac{K^2}{M}\frac{2M+1}{2}} +  \frac{1}{2K^2}e^{-\frac{K^2}{M}\frac{2M+3}{2}}\right)\le \frac{16\pi^{M-1}}{(M-1)!} e^{-K^2}.
\end{split}
%}
\end{equation*}
\noindent Here, we used the symmetry of the domain of integration, the fact that $z_0^2 + z_1^2 < \frac{K^2}{M}$ implies $\max \{|z_0|,|z_1|\}> \frac{K}{\sqrt{2M}}$, and inequality $1 - x \le e^{-x}$. Using the computed bound for $S_{>K/\sqrt{M}}$, we obtain $\mathbb{P}\left\lbrace |g(0)|>\frac{K}{\sqrt{M}}\right\rbrace \le \frac{8}{\pi}e^{-K^2}$. Then
\longhide{
\noindent Since $g = h/||h||_2$ with $h(m)\sim i.i.d. ~ \mathbb{C}\mathcal{N}(0,\frac{1}{M})$, we have $g(0) = h(0)/||h||_2 = (a+ib)/||h||_2$, \linebreak where $a,b\sim i.i.d ~ \mathcal{N}(0,\frac{1}{2N})$. Then 
\begin{equation*}
 \mathbb{P}\{|g(0)|>K/\sqrt{M}\} = \mathbb{P}\left\lbrace \frac{a^2 + b^2}{||h||_2^2}>\frac{K^2}{M}\right\rbrace \le \mathbb{P}\left\lbrace a^2 + b^2 > \frac{K^2}{4M}\right\rbrace + \mathbb{P}\{||h||_2^2 < 1/4\}.
\end{equation*}
Since $a$ and $b$ are independent and have the same distribution, we have 
\begin{equation*}
\begin{split}
{P}\left\lbrace a^2 + b^2>\frac{K^2}{4M}\right\rbrace \le \mathbb{P}\left\lbrace \max(a^2, b^2)>\frac{K^2}{8M}\right\rbrace\le 2\mathbb{P}\left\lbrace |a|>\frac{K}{\sqrt{8M}}\right\rbrace = \frac{4\sqrt{M}}{\sqrt{\pi}} \int_{\frac{K}{\sqrt{8M}}}^{\infty} e^{-x^2 M}dx \le\\
\le \frac{4\sqrt{M}}{\sqrt{\pi}} \int_{\frac{K}{\sqrt{8M}}}^{\infty} \frac{\sqrt{8M}x}{K}e^{-x^2 M}dx = -\frac{4\sqrt{2}}{K\sqrt{\pi}} \int_{\frac{K}{\sqrt{8M}}}^{\infty} e^{-x^2 M}d(-x^2 M) = \frac{4\sqrt{2}}{K\sqrt{\pi}}e^{-\frac{K^2}{8}}.
\end{split}
\end{equation*}

Since $2M ||h||_2^2 = 2M \sum_{k = 1}^M (|a_k|^2 + |b_k|^2)$, where $h_k = a_k + ib_k$ and $a_k, b_k\sim i.i.d ~ \mathcal{N}(0,1/2M)$. Then for $k \in \{1,\dots,M\}$, $2Ma_k, 2Mb_k$ are independent standard Gaussian random variables and we can apply inequality (\ref{upper}) from Lemma \ref{chi_square} with $c_k = 1$ to get
\begin{equation*}
\mathbb{P}\left\lbrace||h||_2^2 \le -\sqrt{\frac{2t}{M}} + 1\right\rbrace\le e^{-t}.
\end{equation*}
\noindent  for every $t>0$. Taking $t= 9M/32$, we obtain
\begin{equation*}
\mathbb{P}\left\lbrace||h||_2^2 \le -\sqrt{\frac{2t}{M}} + 1\right\rbrace = \mathbb{P}\left\lbrace||h||_2^2 \le \frac{1}{4}\right\rbrace\le e^{-9M/32},
\end{equation*}
\noindent and
\begin{equation}\label{prob_bound_large}
\begin{split}
\mathbb{P}\{|\langle x,g \rangle|>K/\sqrt{M}\} & = \mathbb{P}\{|g(0)|>K/\sqrt{M}\}\le \mathbb{P}\{a^2 + b^2>K^2/(4M)\} + \mathbb{P}\{||h||_2^2 < 1/4\}\le\\
& \le \frac{4\sqrt{2}}{K\sqrt{\pi}}e^{-\frac{K^2}{8}} + e^{-9M/32}.
\end{split}
\end{equation}
For simplicity of notation, let us set $\eta = \frac{4\sqrt{2}}{K\sqrt{\pi}}e^{-\frac{K^2}{8}} + e^{-9M/32}$. Then
}
\begin{equation}\label{expectation_large}
\mu = \mathbb{E}(U_x) = \sum_{\lambda\in \Lambda}\mathbb{P}\left\lbrace|\langle x,\pi(\lambda)g\rangle|>\tfrac{K}{\sqrt{M}}\right\rbrace \le \frac{8}{\pi}e^{-K^2}|\Lambda|.
\end{equation}
Similarly, for the variance of $U_x$ we obtain 
\begin{align*}
& \Scale[0.9]{\sigma^2 = \Var(U_x) \le \mathbb{E}(U_x^2) = \mathbb{E}\left(\left(\sum_{\lambda \in \Lambda}{\bf 1}_{\{|\langle x,\pi(\lambda)g\rangle|>K/\sqrt{M}\}}\right)^2\right)} \\
& \Scale[0.9]{= \sum_{\lambda\in \Lambda}\mathbb{P}\left\lbrace|\langle x,\pi(\lambda)g\rangle|>\tfrac{K}{\sqrt{M}}\right\rbrace + \sum_{\substack{(\lambda_1,\lambda_2)\in  \Lambda^2,\\\lambda_1\ne\lambda_2}}\mathbb{P}\left\lbrace|\langle x,\pi(\lambda_1)g\rangle|,  |\langle x,\pi(\lambda_2)g\rangle| >\tfrac{K}{\sqrt{M}}\right\rbrace}\\
& \Scale[0.9]{\le |\Lambda|\mathbb{P}\{|\langle x,g \rangle|>K/\sqrt{M}\} + (|\Lambda|^2 - |\Lambda|)\mathbb{P}\{|\langle x,g \rangle|>K/\sqrt{M}\}\le \frac{8}{\pi}e^{-K^2} |\Lambda|^2,}\numberthis \label{variance_large}
\end{align*}
\noindent that is, $\sigma\le \frac{2\sqrt{2}}{\sqrt{\pi}}e^{-\frac{K^2}{2}}|\Lambda|$. Then, using Chebychev inequality and bounds (\ref{expectation_large}) and (\ref{variance_large}), we obtain
\begin{equation*}\label{chebychev_large}
\mathbb{P}\left\lbrace U_x \ge |\Lambda|\left(\frac{8}{\pi}e^{-K^2} + k\frac{2\sqrt{2}}{\sqrt{\pi}}e^{-\frac{K^2}{2}}\right)\right\rbrace\le \mathbb{P}\{|U_x - \mu|\ge k\sigma\}\le \frac{1}{k^2},
\end{equation*}
\noindent for any $k>0$. In other words, if we delete $|\Lambda|\left(\frac{8}{\pi}e^{-K^2} + k\frac{2\sqrt{2}}{\sqrt{\pi}}e^{-\frac{K^2}{2}}\right)$ largest phaseless measurements, with probability at least $1-\frac{1}{k^2}$ for the remaining  measurements we would have $|\langle x, \pi (\lambda)g\rangle|\le\frac{K}{\sqrt{M}}$.
\end{proof}

%% \section{}
%% \label{}

%% If you have bibdatabase file and want bibtex to generate the
%% bibitems, please use

%\bibliographystyle{plain}
\bibliographystyle{elsarticle-num} 
\section*{\refname}
\bibliography{phase_retrieval_paper_revised}

%% else use the following coding to input the bibitems directly in the
%% TeX file.
\end{document}